\documentclass[11pt]{article}
\usepackage[centering, margin={0.8in, 0.5in}, includeheadfoot]{geometry}

\usepackage{amsmath,amssymb,amsthm,latexsym, amscd,epsfig,epstopdf,enumerate}
\usepackage{epsfig,graphicx,graphics,subfig}

\usepackage{amsmath}
\usepackage{float} 
\usepackage{subfig}
\usepackage{amsfonts}
\usepackage{epsfig,color,enumerate}
\usepackage{cases}
\usepackage{algorithmic}

\usepackage[hidelinks]{hyperref}

%%%%%%%%%%%%%%%%% FANELLI'S PACKAGES %%%%%%%%%%%%%%%%%%%%%%%%%%%

%Alfio: not show labels
%\usepackage[notcite,notref]{showkeys}
\usepackage{verbatim}
\usepackage{dsfont}
\usepackage{color}

%%%%%%%%%%%%%%%%%%%%%%%%%%%%%%%%%%%%%%%%%%%%%%%%%%%%%%%%%%%%%%%%%%

%\renewcommand\appendix{\par
%  \setcounter{section}{0}
%  \setcounter{subsection}{0}
%  \setcounter{figure}{0}
%  \setcounter{table}{0}
%  \renewcommand\thesection{Appendix \Alph{section}}
%  \renewcommand\thefigure{\Alph{section}\arabic{figure}}
%  \renewcommand\thetable{\Alph{section}\arabic{table}}
%}

\newtheorem{theorem}{Theorem}[section]
\newtheorem{definition}{Definition}[section]
\newtheorem{lemma}{Lemma}[section]
\newtheorem{prop}{Proposition}[section]
\newtheorem{corollary}{Corollary}[section]

\newtheorem{remark}{Remark}[section]

%%Theorem enviorments 
%\newtheorem{mydef}{Definition}[section]
%\newshadetheorem{lem}{Lemma}[section]
%\theoremstyle{remark}
%\newtheorem{ex}{Example}
%\newtheorem{rem}{Remark}
%\theoremstyle{theorem}
%\newshadetheorem{theorem}{Theorem}[section]
%\newtheorem{cor}{Corollary}[section]
%\renewcommand{\theenumi}{(\roman{enumi})}

\def\R {{\mathbb R}}

\def\N {{\mathbb N}}
\def\E {{\mathbb E}}

\newcommand\dive{\mathop{\rm div}}

\graphicspath{{./pics/}}

\input{shortcuts.sty} %Makros, theorem enviroments
\usepackage{csquotes}

%%%%%%%%%%%%%%%%%%%%%%%%%%%%%%%%%%%%%%%%%%%%%%%%%%%%%%%%%%%%%%%%%%%%%%%%%%%%%%%%%%%%%%%%%%%%
%%%%%%%%%%%%%%%%%%% FANELLI'S COMMANDS %%%%%%%%%%%%%%%%%%%%%%%%%%%%%%%%%%%%%%%%%%%%%%%%%%%%%
%%%%%%%%%%%%%%%%%%%%%%%%%%%%%%%%%%%%%%%%%%%%%%%%%%%%%%%%%%%%%%%%%%%%%%%%%%%%%%%%%%%%%%%%%%%%

%\theoremstyle{definition}
%\newtheorem{defin}{Definition}[section]
%\newtheorem{ex}[defin]{Example}
%\newtheorem{rem}[defin]{Remark}
%
%\theoremstyle{plane}
%\newtheorem{thm}[defin]{Theorem}
%\newtheorem{prop}[defin]{Proposition}
%\newtheorem{coroll}[defin]{Corollary}
%\newtheorem{lemma}[defin]{Lemma}

\newcommand{\tbf}{\textbf}

\newcommand{\tsl}{\textsl}
\newcommand{\mbb}{\mathbb}
\newcommand{\mbf}{\mathbf}
\newcommand{\mc}{\mathcal}

\newcommand{\veps}{\varepsilon}
\newcommand{\what}{\widehat}
\newcommand{\wtilde}{\widetilde}
\newcommand{\vphi}{\varphi}
\newcommand{\oline}{\overline}

\newcommand{\ra}{\rightarrow}

\newcommand{\g}{\gamma}
\newcommand{\s}{\sigma}

\newcommand{\de}{\delta}
\newcommand{\lan}{\langle}
\newcommand{\ran}{\rangle}

\renewcommand{\div}{{\rm div}\,}
\renewcommand{\det}{{\rm det}\,}

\allowdisplaybreaks

\def\d{\partial}

\usepackage{xcolor}

\numberwithin{equation}{section}

%%%%%%%%%%%%%%%%%%%%%%%%%%%%%%%%%%%%%%%%%%%%%%%%%%%%%%%%%%%%%%%%%%%%%%%%%%%%%%%%%%%%%%%%%%%%
%%%%%%%%%%%%%%%%%%%%%%%%%%%%%%%%%%%%%%%%%%%%%%%%%%%%%%%%%%%%%%%%%%%%%%%%%%%%%%%%%%%%%%%%%%%%
%%%%%%%%%%%%%%%%%%%%%%%%%%%%%%%%%%%%%%%%%%%%%%%%%%%%%%%%%%%%%%%%%%%%%%%%%%%%%%%%%%%%%%%%%%%%

\begin{document}
\title{\LARGE\bf A theoretical investigation of Brockett's ensemble\\ optimal control problems}

\author{
Jan Bartsch{\footnote{\textsc{J. Bartsch}:
\texttt{jan.bartsch@mathematik.uni-wuerzburg.de}; Institut f\"ur Mathematik,
Universit\"at W\"urzburg,
Emil-Fischer-Strasse 30,
97074 W\"urzburg,
Germany.}}\and
Alfio Borz\`i{\footnote{\textsc{A. Borz\`i}:
\texttt{alfio.borzi@mathematik.uni-wuerzburg.de}; Institut f\"ur Mathematik,
Universit\"at W\"urzburg,
Emil-Fischer-Strasse 30,
97074 W\"urzburg,
Germany.}}\and
Francesco Fanelli{\footnote{\textsc{F. Fanelli}:
\texttt{fanelli@math.univ-lyon1.fr};
Universit\'e Claude Bernard Lyon 1,
CNRS UMR 5208, Institut Camille Jordan,
43 blvd. du 11 Novembre 1918,
F-69622 Villeurbanne cedex, France.}}\and
Souvik Roy{\footnote{\textsc{S. Roy}:
\texttt{souvik.roy@uta.edu}; Department of Mathematics, The University of Texas at 
Arlington, Mathematics, 411 South Nedderman Drive, 
Box 19408, 
Arlington, TX 76019-0408.}}
}
\date{\small\today}

\maketitle

%Alfio: please abstract impersonal, no we ....

\begin{abstract}
{\small This paper is devoted to the analysis of problems of optimal control of ensembles governed by the Liouville (or continuity) equation.
The formulation and study of these problems have been put forward in recent years by R.W. Brockett, with the motivation that 
ensemble control may provide a more general and robust control framework.

Following Brockett's formulation of ensemble control, a Liouville equation with unbounded drift function, and a class of cost 
functionals that include tracking of ensembles and different control costs is considered.
For the theoretical investigation of the resulting optimal control problems, a 
well-posedness theory in weighted Sobolev spaces is presented for the Liouville and transport equations. 
Then, a class of non-smooth optimal control problems governed by the Liouville equation 
is formulated and existence of optimal controls is proved. Furthermore, optimal controls are characterised 
as solutions to optimality systems; %involving the forward Liouville equation and its adjoint, a transport equation, and a variational inequality representing the optimality condition. 
such a characterisation is the key to get (under suitable assumptions) also uniqueness of optimal controls.
}
\end{abstract}

\paragraph*{Keywords:}{\small
Liouville and transport equations, well-posedness theory, weighted spaces, optimal control theory, non-smooth optimization, optimality systems.}

\paragraph*{2010 Mathematics Subject Classification:}{\small 
49J20 (primary); % Calculus of variations and optimal control; optimization // Existence theories // Optimal control problems involving PDEs
35L03, % Initial value problems for first-order hyperbolic equations
35B65,  % PDEs // Qualitative properties of solutions // Smoothness and regularity of solutions
49K20, % Calculus of variations and optimal control; optimization // Optimality conditions //Problems involving PDEs
35Q93 (secondary). 
}

\section{Introduction}
The notion of ensemble control was proposed by R.W. Brockett in \cite{Brockett1997}, and further in \cite{Brockett2012,Brockett2007}, 
while considering the problem of a trade-off between the complexity of implementing 
a control strategy and the performance of the control system. For the former, 
Brockett discusses the concept of minimum attention control that results in costs 
of the control that involve a partial time-derivative of the control function. For the latter, he emphasizes the advantage of 
considering an ensemble of trajectories, which stem from a distribution of initial 
conditions, rather than individual trajectories. By these two consideration, Brockett 
concludes that the natural setting for investigating both aspects of the 
resulting control problem is by means of the Liouville (or continuity)
equation that governs the evolution of the ensemble of trajectories. 

The Liouville equation is a hyperbolic-type partial differential equation, often used to model the evolution of density functions representing the probability 
density of multiple trials of a single evolving ordinary differential equation (ODE in brief) system, or the 
physical (e.g. particle) density of multiple non-interacting systems. 
In both cases, the function of the dynamics of the ODE model appears as the drift coefficient 
of the Liouville equation. Therefore the problem of controlling a trajectory of a 
finite-dimensional dynamical system is lifted to the problem of controlling a continuum of dynamical systems with the same control strategy. Specifically, this setting 
results in the problem of determining a single closed- or open-loop controller, which applies to a particular system over an infinite number of repeated trials, or to steer a family of finite-dimensional
dynamical systems. As discussed by Brockett, this approach represents a new control framework that is able to address a number of issues as uncertainty in initial conditions and
the trade-off mentioned above. 

The purpose of this paper is to contribute to the development of Brockett's Liouville-based ensemble control with a theoretical investigation 
of a class of Liouville optimal control problems with unbounded coefficients and cost functionals that are formulated in terms of 
the density and of different control costs. Those control costs model various requirements on the control functions, including control constraints. 
To the best of our knowledge, there is no similar investigation available in the literature yet.
We will present more details about the formulation of our problem in Subsections \ref{sec-DynModelsLiouville} and \ref{sec-formulation}. Thereafter, we give a detailed overview of the main results
of the paper in Subsection \ref{sec-overview}.

%Alfio: I prefer detaile in place of 'quite precise' , I would avoid to use future form 'will' etc. 

For the time being, let us point out that our work focuses on two main issues. First of all, we study existence and regularity 
of solutions to a Liouville initial-value problem with a so-called control-in-the-coefficients, which is related to a differential model 
with a linear and bilinear control mechanism. In general, this latter model has the following 
structure:  $\dot x (t)\,=\, \oline a + \oline b \,u(t) + \oline c \, v(t) \, x(t)$, where $\oline a,\, \oline b, 
\, \oline c$ are given functions and $u$ and $v$ denote the linear and bilinear controls, respectively. As 
we show below, the right-hand side of this equation corresponds to the 
drift coefficient of the Liouville equation $a(t,x;u,v)\,=\,\oline a + \oline b \,u(t) + \oline c \, v(t) \, x$, which is unbounded for $x\in\R^d$. For this reason, we consider 
the less investigated problem of existence and regularity of solutions to a Liouville equation having a 
drift which has at most a linear growth in space at infinity: we give a self-contained presentation of the well-posedness theory (due to DiPerna and Lions, see \cite{DiPernaLions1989};
see also \cite{Crippa_PhD}) in Sobolev spaces $H^m$, for drifts
$a\, \in\, L^1\bigl([0,T];C^{m+1}(\R^d)\bigr)$ such that $ \nabla a\,\in\,L^1\bigl([0,T];C_b^{m}(\R^d)\bigr)$.
In addition, in order to address continuity and Fr\'echet differentiability properties of the 
control-to-state map $G:\,(u,v) \mapsto \rho$, we extend existence and uniqueness results to the framework of appropriately weighted Sobolev spaces; to the best of our knowledge,
such a well-posedness theory, which is natural in our context, seems to be new in the literature.

These results are essential to study ensemble optimal control problems which include Brockett's cost functionals with density of ensemble of trajectories, a $L^2$ cost of the 
control and a $H^1$ cost which promotes minimum attention controls. 
Moreover, we extend this framework including a $L^1$ cost of the control, which should 
promote minimum action control during the time evolution, and include 
box control-constraints. For the resulting Liouville optimal control problem, we 
discuss existence and uniqueness of optimal controls and their 
characterization by optimality systems. Specifically, we prove existence of 
controls in our general framework with weighted Sobolev spaces; in the case where only $L^2$ costs of the control are considered, we prove also their uniqueness.
For the characterisation of the optimal solution in our general setting, we use the (sub-)differentiability properties of 
the cost functional and of the control-to-state map and prove the 
first-order optimality conditions.
% PUT IN THE CORRRESPONDING SECETION: For clarity, we first discuss the case with $L^2$ costs only, then the case with $L^2-H^1$ costs, and finally the case  with $L^2-L^1- H^1$ costs.

We conclude this brief introduction by remarking that, according to Brockett, in the cost functionals of ensemble controls the density-based tracking and terminal observation terms
(i.e. respectively $\theta$ and $\vphi$ in \eqref{Jfunc} below) are 
formulated by introducing ``attracting'' potentials, which are quadratic (as discussed in \cite{Brockett2012,Brockett2007}).
Capturing such a framework represents an additional challenge for the theoretical investigation of these control problems: as shown in the final part of this paper, we are able to
address and solve this issue within our weighted Sobolev spaces framework.

\medbreak
The rest of the paper unfolds as follows. In Section \ref{sec-probformres}, we present the problem formulation and an overview of our main results. 
%Alfio: I would prefere a less discursive style: remove  (without reporting precise statements, though)
In Section \ref{sec-LiouvilleTransport}, we state and prove our theoretical results on well-posedness of Liouville and transport equations with unbounded drifts in weighted Sobolev spaces.
Section \ref{s:control-map} is devoted to the study of the Liouville control-to-state map $G$. Section \ref{s:ocp} is devoted to the analysis of the 
Liouville optimal control problem: we establish existence of optimal controls, their characterisation as solutions of a first-order optimality system, and by use of the previous
characterisation, their uniqueness in specific cases. An appendix completes this work, where we postpone the proof of some technical results.

%%%%%%%%%%%%%%%%%%%%%%%%%%%%%%%%%%%%
\subsubsection*{Acknowledgements}
%%%%%%%%%%%%%%%%%%%%%%%%%%%%%%%%%%%%%

The third author has been partially supported by the LABEX MILYON (ANR-10-LABX-0070) of Universit\'e de Lyon, within the program ``Investissement d'Avenir''
(ANR-11-IDEX-0007),  by the project BORDS (ANR-16-CE40-0027-01)
and  the programme ``Oberwolfach Leibniz Fellows'' by the Mathematisches Forschungsinstitut Oberwolfach in 2017.

\subsubsection*{Notation}
In this section, we present our notation that we use throughout the paper.

Given a domain $\Omega\subset\R^d$, the symbol $C_c^\infty(\Omega)$ denotes the space of infinitely often differentiable functions
with compact support in $\Omega$.
Given $k\in\N$, we denote by $C^k(\Omega)$ the space of all $k$-times continuously differentiable functions defined on $\Omega$, and by $C_b^k(\Omega)$ the subspace of $C^k(\Omega)$
formed by functions which are uniformly bounded together with all their derivatives up to the order $k$. We equip $C_b^k(\Omega)$ with the $W^{k,\infty}$-norm as follows 
\begin{align*}
\standardNorm{v}_{C^{k}_b}\, :=\, \sum_{|\alpha| \leq k} \standardNorm{D^\alpha v}_{L^\infty}\,.
\end{align*} 
For $\alpha\in\,]0,1]\,$, we denote with $C^{0,\alpha}(\Omega)$ the classical H\"older space (Lipschitz space if $\alpha=1$), endowed with the norm
\begin{align*}
\standardNorm{\Phi}_{C^{0,\alpha}}\,:=\,\sup_{x \in \Omega} |\Phi(x)|\, +\, \sup_{\substack{x,y \in \Omega \\ 0<|x-y|\leq1}}  \frac{|\Phi(x)-\Phi(y)|}{|x-y|^\alpha}\,.
\end{align*}
In particular, $C^{0,1}(\Omega)\,\equiv\,W^{1,\infty}(\Omega)$.

For $k \in \NN$ and $1 \leq p \leq +\infty$, we denote with $W^{k,p}(\Omega)$ the usual Sobolev space of $L^p$ functions with all the derivatives up to the order $k$ in $L^p$; we also set
$H^k(\Omega):=W^{k,2}(\Omega)$.
For $1\leq p < +\infty$, let $W^{-k,p}(\Omega)$ denote the dual space of $W^{k,p}(\Omega)$.
For any $p\in[1,+\infty]$, the space $L^p_{loc}(\Omega)$ is the set formed by all functions which belong to $L^p(\Omega_0)$, for any compact subset $\Omega_0$ of $\Omega$.

Furthermore, we make use of the so-called Bochner spaces. Given two Banach spaces $X$ and $Y$ and a fixed time $T>0$, we define
\begin{align*}
 X_T(Y)\, := \,X\bigl([0,T];Y\bigr)\,,\qquad\qquad \mbox{ with }\qquad \standardNorm{u}_{X_T(Y)}\,:=\,\int_0^T \standardNorm{u(t)}_Y\, dt\,.
\end{align*}

Given a Banach space $X$ and a sequence $\bigl(\Phi_n\bigr)_n$, we use the notation $\bigl(\Phi_n\bigr)_n \subset X$ meaning that $\Phi_n \in X$ for all $n \in \NN$ and
that this sequence is uniformly bounded in $X$: there exists some constant $M>0$ such that $\standardNorm{\Phi_n}_X \leq M~\forall n \in \NN$.

Given two Banach spaces $X$ and $Y$, the space $X\cap Y$, endowed with the norm $\|\cdot\|_{X\cap Y}\,:=\,\|\cdot\|_{X}\,+\,\|\cdot\|_{Y}$, is still a Banach space.

For every $p\in[1,+\infty]$, we use the notation $\LL^p_T(\RR^d)\,:=\, L^p_T(\RR^d) \times L^p_T(\RR^d)$. Analogously, $\HH^1_T(\RR^d)\,:=\, H^1_T(\RR^d) \times H^1_T(\RR^d)$. 
In addition, given two vectors $u$ and $v$ in $\R^d$, we write $u\leq v$ if the inequality is satisfied component by component by the two vectors: namely, $u^i\leq v^i$ for all $1\leq i\leq d$.

Given two operators $A$ and $B$, we use the standard notation $[A,B]$ for their commutator: $[A,B]\,:=\,AB-BA$.

\section{Problem formulation and overview of the results}
\label{sec-probformres}
In this section, we present the problem formulation and an 
overview of our results. In Section \ref{sec-DynModelsLiouville}, 
we discuss the Liouville equation and the control mechanism. In 
Section \ref{sec-formulation}, we formulate our 
Liouville ensemble optimal control problem. Section \ref{sec-overview} 
illustrates our main results. 
%Alfio: remove (with no rigorous statements, though). 

\subsection{The Liouville equation and a control mechanism}
\label{sec-DynModelsLiouville}

The Liouville model represents the fundamental 
building block of many important equations in fluid mechanics and related fields, as e.g. the Boltzmann 
equation, the Fokker-Planck equation, the Vlasov equation. It arises in  diverse areas of sciences 
as biology, finances, mechanics, and physics; 
see e.g. \cite{Cercignani1969,ChoVenturiKarniadakis2015,CockshottZachariah2013, OceanDynamics, EisemanStone1980,
Colonna2016,Risken1996}.

Central to our discussion is the relation of the Liouville equation 
to a given ODE evolution model. To illustrate this fact, consider a smooth vector field $a(t,x)$ over $\R^d$, where $(t,x)\in[0,T]\times\R^d$, for some time $T>0$. We refer to $a$ as the 
drift function. It is well-known that, if a scalar function $\rho$, defined on $[0,T]\times\R^d$, 
satisfies the Liouville equation
\begin{equation}
\d_t\rho(t,x)\, +\, \div\bigl( a(t,x) \, \rho (t,x)\bigr)\, =\,0, 
\label{Liouville1D}
\end{equation}
with some (say) smooth initial datum $\rho_{|t=0}\,=\,\rho_0$, then we can represent $\rho$ by the formula
$$
\rho(t,x)\,=\,\frac{1}{\det J(t,x)}\,\rho_0\bigl(\psi_t^{-1}(x)\bigr)\,,
$$
where $\psi_t(x)\,=\,\psi(t,x)$ denotes the flow map associated to $a$, $J(t,x)\,=\,\nabla_x\psi_t(x)$ is its Jacobian matrix, and $\psi_t^{-1}(x)$ means the inverse with respect to the space variable,
at $t$ fixed. By definition of flow map, $\psi$ verifies the following system of ODEs
\begin{equation}
\d_t\psi(t,x)\,=\,a\bigl(t,\psi(t,x)\bigr)\,, \qquad \psi(0,x)=x\,.
\label{ODE1}
\end{equation}
Notice that we can equivalently write this Cauchy problem 
as $\dot{y}(t)=\,a\bigl(t,y(t)\bigr)$, $y(0)=x$, to point out that 
the (independent) space variable in the Liouville equation 
corresponds to the (dependent) state variable of the 
related dynamical system. In the following, we shall use the 
same symbol for both cases. 

Problem \eqref{Liouville1D} models the evolution of the ensemble of trajectories of \eqref{ODE1} for a density  distribution of initial conditions given by $\rho_0$. 
Thus, one possible interpretation of \eqref{Liouville1D} is that the function $\rho(t,x)$ represents the probability density function of finding the system in $x$ at time $t$,
assuming that $\rho_0$ prescribes the initial probability density for $\psi_t^{-1}(x)$. 
This interpretation in the space of probability appears  in the realm of Kolmogorov-Fokker-Planck and Kramers equations related to stochastic processes, where 
the Liouville equation corresponds to the first-order differential term of these equations; see e.g. \cite{Risken1996}. On the other hand, if $\rho$ represents the material density
of non-interacting particles, then \eqref{Liouville1D} models the evolution of this density, see e.g. \cite{Fei-No}

Notice that the former point of view is the predominant one 
in Brockett's consideration and we adopt it in the rest of the paper. Therefore, it is natural to assume an initial condition for the 
Liouville model such that $\rho_0\ge 0$, together with the normalization $\int_{\R^d} \rho_0(x)dx=1$. 
%Alfio: we will keep in mind this point of view
Consequently, by equation \eqref{Liouville1D}, it holds
for all times $t\geq0$ that
$$
\rho(t,x)\,\ge\,0\qquad\mbox{ and } \qquad \int_{\R^d} \rho(t,x)\,dx\,=\,\int_{\R^d} \rho_0(x)dx\,=\,1\,.
$$
The first property can be proved by the vanishing viscosity method 
and the maximum principle, see e.g. \cite{Fei-No,GodlewskiRaviart1991}, or just by solving \eqref{Liouville1D} along characteristics; the second 
property follows from a simple application of the divergence theorem. 
%Alfio:
However, notice that most of our results do not require
the latter two assumptions on $\rho_0$.

\medbreak
Next, let us discuss our control mechanism. We remark that the focus of ensemble control is the development of a control strategy for the 
differential model \eqref{ODE1} augmented with a 
control mechanism, as follows
\begin{equation}
\dot{x} = a(t,x;u),
\label{ODE2}
\end{equation}
where $u$ denotes the control function. 

We refer to \cite{Brockett2012,Brockett2007} for a 
discussion on the choice of $u$ as a function of time only, 
which corresponds to a so-called open-loop control, 
or as a function of time and of the state variable, which
may represent a feedback law. In this paper, while we consider our controlled Liouville 
model in a general setting that accommodates both 
choices, we focus our attention on open-loop optimal control problems: this point of view is
motivated by the fact that the most used 
control mechanisms for \eqref{ODE2} are the linear and 
bilinear ones, as follows,
\begin{equation}
a(t,x;u)\,=\,a_0(t,x)\,+\,u_1(t)\,+\, x \circ u_2(t)\,, %\qquad u=(u_1,u_2),
\label{controlmechanism}
\end{equation}
where $a_0$ is a smooth vector field and $u\,=\,(u_1,u_2)$ is the control, which, for the scope of the present discussion, we assume to be smooth. The control $u_1$ represents a linear 
control mechanism and $u_2$ multiplying the state 
variable $x$ represents the bilinear control term. 
Both functions $u_1$ and $u_2$ are defined on the time interval $[0,T]$ with values in $\R^d$. Further, with $\circ:\R^d \times \R^d \rightarrow \R^d$ we denote the Hadamard product
of two vectors, i.e. the multiplication component by component.

Now, notice that corresponding to the controlled 
evolution model \eqref{ODE2}, we have the following controlled 
Liouville equation
\begin{equation}
\d_t\rho(t,x)\, +\, \div\bigl( a(t,x;u) \, \rho (t,x)\bigr)\, =\,0. 
\label{Liouville1Dcontrol}
\end{equation}
In this framework and in the simple case $a_0=0$ and $d=1$, we can 
give a simple interpretation of the role of $u_1$ and $u_2$: The control 
$u_1$ represents the driving force of the mean value of the density; $u_2$ determines the evolution of the 
variance of the density. In fact, let 
$\rho_0$ represents a normalized Gaussian centred at $x_0$ with variance $v_0$ at time $t=0$, and define the following average operator 
$$
\E[g](t)\, = \,\int_{\RR} g(x) \, \rho(t,x) \, dx \,.
$$
In particular, we have the mean $m(t)=\E[x](t)$ and the variance $v(t)=\E\big[\big(x-m(\cdot)\big)^2\big](t)$. Then, by taking the average of our controlled ODE model, we obtain the following equations:
\begin{align*}
\dot m (t)\, =&\, u_1(t)\, +\, m(t) \, u_2(t) , \qquad &m(0)\, =\,x_0\,, \\
\dot v (t)\, =&\, 2 \, v(t) \, u_2(t) \,, \qquad &v(0)\,=\,v_0\, .
\end{align*}
However, because of the limiting assumptions, this construction does not 
provide the degree of generality that the Liouville framework offers, as the 
latter allows to accommodate any chosen drift with any control mechanism. 
The Liouville framework is also very attractive because it allows to 
consider bimodal distributions of the initial density function. 

Finally, we remark that, for the characterization of the solution to our Liouville optimal control problems, we shall deal with \eqref{Liouville1Dcontrol} and with an adjoint
Liouville problem, namely a transport problem, given by 
\begin{equation} \label{eq:transp}
\d_tq(t,x)\,+\,a\bigl(t,x;u\bigr)\cdot \nabla q(t,x)  \,=\,g(t,x)\,, \qquad \mbox{ with }\quad q_{|t=0}\, =\,q_0 \,,
\end{equation}
where $g$ and $q_0$ depend on the optimization data.

\subsection{Formulation of ensemble optimal control problems} \label{sec-formulation}
In order to discuss the formulation of ensemble control, 
consider the following ODE optimal control problem
\begin{align}
&\min j(x,u)\,:=\, \int_0^T \Big( \theta\big(x(t)\big)\, +\, \kappa\big(u(t)\big) \Big) \,dt\, +\, \varphi\big(x(T)\big) \label{jfunc}\\
&\mbox{s.t.} \qquad \dot{x}(t)\,= \,a\big(t,x(t);u(t)\big)\,, \qquad x(0)\,=\,x_0\,, \label{ODEcontrol}
\end{align}
where ``s.t.'' stands for ``subject to''. Here above, $\theta$, $\kappa$ and $\varphi$ are usually taken to be continuous convex functions of their arguments; we will better specify their properties
later on in the present section.

Further, assume that the optimal control function $u$ is sought in the following set of admissible controls:
\begin{equation}
U_{ad}\,: =\, \left\{ u \,\in\, \LL^\infty_T(\RR^d)\;\bigl|\quad u^a\, \leq\, u(t)\, \leq\, u^b \qquad\mbox{ for a.e. }\; t\,\in\,[0,T]\right\}\,. 
\label{setUad}
\end{equation}
In particular, in the case of \eqref{controlmechanism}, 
we have two box constraints $u^a\,=\,(u_1^a,u_2^a)$ and $u^b\,=\, (u_1^b,u_2^b)$, where $u_j^a < u_j^b$, $j=1,2$, are given vectors in $\R^{d}$. Clearly, the optimal control function $u$ that 
solves \eqref{jfunc}-\eqref{ODEcontrol} with $u \in U_{ad}$ depends on the initial condition $x_0$, which is fixed, and it represents a control strategy that 
is determined once and for all times for the given $x_0$ 
and the given optimization setting. Therefore any uncertainty 
on the initial condition is not taken into account in the 
formulation \eqref{jfunc}-\eqref{ODEcontrol} and, hence, 
the resulting control is not robust. On the other hand, a 
closed loop control, say, $u=u(t,x)$, would appropriately 
control the system based on the actual state of the system; however, as pointed out in \cite{Brockett2007}, the cost of implementing such a control mechanism 
is often prohibitive and may be not justified by real applications. 

For this reason, with the purpose to strike a balance 
between the desired performance of the system and the cost 
of implementing an effective control, the ensemble control strategy considers a density of initial conditions, and therefore ensemble of trajectories. In this way, it aims at achieving
robustness, while choosing control costs which promote controls allowing for easier implementation (see below). 
Thus, one is led to the formulation of the following 
ensemble optimal control problem 
\begin{align}
&\min_{u \in U_{ad}} J(\rho,u)\,:=\,\int_0^T \int_{\R^d} 
\theta(x)  \, \rho(x,t) \, dx\, dt\, +\, \int_{\R^d} \varphi(x) \, \rho(x,T) \, dx\, +\, \int_0^T \kappa\big(u(t)\big) \, dt
\label{Jfunc}\\
&\mbox{s.t.} \qquad \d_t\rho\, +\, \dive \bigl( a(t,x;u) \, \rho \bigr)\, =\,0\,, \qquad \rho_{|t=0}\,=\, \rho_0 \,. 
\label{LiouvilleEQ}
\end{align}
This problem is defined on the space-time cylinder $\RR^d \times [0,T]$, for some $T >0$ fixed. 
In this formulation, the initial density $\rho_0$ represents the probability 
distribution of the initial condition $x_0$ in \eqref{jfunc}-\eqref{ODEcontrol}, and 
thus it models the known uncertainty on the initial data.

\medbreak
Next, we discuss some specific choices of the optimization components in \eqref{jfunc}-\eqref{ODEcontrol}, and correspondingly in 
\eqref{Jfunc}-\eqref{LiouvilleEQ}.

For example, if 
$x=0$ is a critical point for \eqref{ODEcontrol}, which requires $a(t,0;u)=0$, then the choice $\theta(x)=x^2$  appears standard for stabilization purposes. Usually, in this 
context, the so-called $L^2$ cost of the control is considered, 
which corresponds to the choice $k(u)=\gamma \, u^2$, where 
$\gamma >0$ is the weight of the cost of the control. On the other hand, if the 
purpose of the control in \eqref{jfunc}-\eqref{ODEcontrol} 
is to track a desired and even non-attainable trajectory $x_d \in L^2(0,T;\R^d)$, and to come close to a given final configuration $x_T \in \R^d$ at the final time (possibly with
$x_d(T) \neq x_T$), then a natural choice 
appears to be $\theta\big(x(t)\big)=\alpha\big(x(t) - x_d(t)\big)^2$ and $\varphi\big(x(T)\big)=\beta\big(x(T)-x_T\big)^2$, with appropriately chosen weights $\alpha, \,\beta >0$. However, in the 
context of ensemble control, as in \eqref{Jfunc}, the choice of $\theta$ and $\varphi$ 
as convex functions is problematic because of integrability issues. On the other 
hand, we remark that the role of this function is to define an attracting potential, 
that is, to define a well centred at a minimum point such that the minus gradient 
of the potential is directed towards this minimum. 
%Alfio: for this purpose
For this purpose, a possible choice is $\theta(x)=1-\exp(-x^2)$,
with the minimum at $x=0$. In our analysis, we are able to address both cases in the framework of weighted Sobolev spaces: the case of attracting potentials $\theta$ and $\varphi$ which
are both $L^2$ integrable, and the case of $\theta$ and $\varphi$ which are quadratic functions.
Notice that, in any case, the modelling choice for \eqref{jfunc}-\eqref{ODEcontrol} translates without changes to \eqref{Jfunc}-\eqref{LiouvilleEQ}. 

As discussed in \cite{Brockett2012,Brockett2007, Brockett1997}, the choice of the cost function $\kappa$ should be such that the effort of implementing the control strategy 
is as small as possible. In this sense, the cost of implementing a slowly varying control function, and (we add) a control that does not act for all times, should 
be smaller than that corresponding to a control having large variations.  From this perspective, a constant input that controls the system 
is the cheapest choice, and the next possible choice is a control that slowly changes in time. 
This requirement leads naturally to a cost of the form
$$
\nu \, \int_0^T  \left( \frac{d u}{d t}(t) \right)^2 \, dt ,
$$
where $\nu \ge 0$ is a non-negative weight. In fact, as $\nu$ is taken larger, the 
resulting optimal control will have smaller values of its time derivative, that is, a 
slowly varying control, which is called ``minimum attention control'' in \cite{Brockett1997}. 

On the other hand, the most common quantification of the cost of a control in the 
context of \eqref{jfunc}-\eqref{ODEcontrol} is the $L^2$-cost already mentioned above.  
More recently, there has been a surge of interest in $L^1$-costs, originating 
from signal reconstruction and magnetic resonance imaging \cite{Candes2006}. 
This cost is given by 
$$
\delta \, \int_0^T  \left| u(t) \right| \, dt ,
$$
where $\delta \ge 0$. The effect of this cost is that it promotes sparsity of the control function, in the 
sense that, as $\delta >0$ is increased, the $u$ resulting from the 
minimisation procedure will be zero on open intervals in $\,]0,T[\,$, 
and these intervals become larger and eventually cover all of $\,]0,T[\,$ as $\delta \to +\infty $. 
In the present papaer, we introduce the $L^1$-cost in the context of ensemble control and call the 
resulting sparse control a ``minimum action control''. 

All together, we specify the term $\int_0^T \kappa\big(u(t)\big)\,dt$ in \eqref{jfunc} and in 
\eqref{Jfunc} as follows
\begin{equation}
\kappa\big(u(t)\big)\, :=\, \frac{\gamma}{2} \,   \big( u(t) \big)^2\, +\, \delta \,   \left| u(t) \right| \, +\, \frac{\nu}{2} \,  \left( \frac{d u}{d t} (t)\right)^2 \, ,
\label{controlcosts}
\end{equation}
where $\gamma + \delta + \nu > 0$ and the factor $1/2$ is chosen for convenience 
of later calculations. 

Notice that different choices of the value of the weights $\gamma, \, \delta, \, \nu$ 
will result in different features of the resulting optimal control function. We investigate the properties of $u$ resulting from these different choices in 
Section \ref{sec-existencecontrol} and its characterisation in Section 
\ref{sec-OptimalitySystem}. 

\subsection{Overview of the main results}
\label{sec-overview}
%Alfio: less discursive, no will , no here, no folklore :-) 
In this section, we give a short overview of the main results contained in this paper.

The first step in our analysis consists in investigating the well-posedness of the PDE under consideration and its adjoint, namely continuity 
and transport equations with unbounded drift function, which presents the structure \eqref{controlmechanism}. We perform such a study in Section \ref{sec-LiouvilleTransport} below.
%
%The well-posedness theory for Liouville and transport equations under minimal regularity assumptions on the drift
%vector field $a$ has attracted the attention of many scientists in the last decades.

We point out that we do not strive for minimal regularity hypotheses on the drift vector field $a$, and frame our work within a setting that can be considered (for bounded coefficients)
classical; see e.g. Chapter 3 of \cite{BCD}.
Nonetheless, to the best of our knowledge very few references deal with
the case of unbounded drifts, with at most linear growth at infinity, for which the well-posedness theory goes back to the conrnerstone paper \cite{DiPernaLions1989}
(see also \cite{Amb_2004, AmbrosioCrippa2008, Amb-Cr_2014, Crippa_PhD} and references therein). 
For this reason, for clarity, we give a self-contained presentation of the well-posedness theory for the Liouville and transport equations with unbounded drifts in classical Sobolev spaces:
this part is just a review, and contains no new results.
%Alfio: non e' chiaro se questi lavori coprano gia' il nostro caso unbounded oppure no. 
% non si capisce

%Alfio: remove in a while, will 
In addition, for reasons which will become clear later, in Section \ref{ss:weight} we investigate well-posedness of the Liouville and transport equations in weighted
spaces $H^m_k$, see Definition \ref{def:H^m_k} below. Roughly, a tempered distribution
%Alfio: distribution - spell check ?
$\rho\in H^m$ belongs to $H^m_k$ if it enjoys further integrability at $+\infty$,
namely if $\rho$ and all its derivatives up to order $m$ belong to the measurable space $\big(L^2(\R^d),(1+|x|)^k\,dx\big)$.
On the one hand, existence, uniqueness and regularity properties are derived in this context by standard arguments: the key of the analysis reduces to show suitable \tsl{a priori} estimates
on the solutions in weighted norms. On the other hand, the results of this part seem to be completely new in the literature; in addition, this framework reveals to be well-adapted to the
investigation of our ensemble optimal control problem, see more details below.

%Alfio: remove a couple of remarks 
We believe that well-posedness in weighted spaces can be adapted with no special problems to $L^p$-based spaces,
for any $1\leq p<+\infty$. Then, ensemble optimal control problems with different integrability conditions can be considered as well.
In addition, very likely the $H^m_k$ theory should generalise also to more general hyperbolic systems which are symmetrizable in the sense of Friedrichs (see e.g. \cite{BG-Serre_2007, Met_2008}).
Consequently, we expect to be able to deal with optimal control problems related to those systems by similar methods as the ones presented in this paper.
However, extensions of the present work to both directions go beyond the scope of our paper, and we leave them for further studies.

\medbreak
After establishing well-posedness of the Liouville equation in a suitable framework, we pass to investigating the optimal control problem related to that PDE. 
For this, we follow a standard scheme.

First of all, in Section \ref{s:control-map} we define and study the Liouville control-to-state map $G$, namely the map which associates to any control state $u$
the unique solution $\rho\,=\,G(u)$ to the corresponding Liouville equation. A fundamental issue in this part is to prove Fr\'echet differentiability (in a suitable topology) of $G$.
Our method to get that property (see Section \ref{ss:G-diff}) consists in applying the definition of Fr\'echet differentibility, and showing the convergence
of the limit under consideration in the strong $L^\infty_T(L^2)$ topology, by performing fine stability estimates on the Liouville equation. Now, dealing with the growth 
in space of our drift
function at $+\infty$ requires the use of weighted norms and weighted spaces in order to carry out those estimates; moreover, due to the hyperbolicity of
transport and continuity equations, a loss of regularity occurs, which requires to consider both higher smoothness and higher integrability on the initial data
(namely, both $m\geq 2$ and $k\geq 2$). Fr\'echet differentiability reveals to be a fundamental property for the subsequent analysis.

In Section \ref{s:ocp}, we complete the investigation of the ensemble optimal control problem. First of all, we prove (see Theorem \ref{thm:existenceOptimalSolutions})
existence of optimal controls, in the simpler case of attracting potentials which are moreover $L^2$. Then, we characterise these optimal controls as solutions
of a first-order optimality system, whose interpretation in terms of the Fr\'echet differential of the reduced functional $\what J(u)\,:=\,J\big(u,G(u)\big)$ exploits the Fr\'echet differentiability
of $G$, proved above.
%Alfio: remove here
We also notice that the differentiability properties of $J$ change radically depending 
on the choice of the optimization weights. For instance, if $\gamma >0$ and $\delta=\nu=0$, then the optimization space is $L^2(0,T)$ and we have 
Fr\'echet differentiability of the cost functional. This is the ``standard'' case. 
If instead $\delta > 0$, then we have a semi-smooth optimal control problem and we have to resort to the use of 
sub-differentials. Finally, if $\nu >0$, then $H^1(0,T)$ is the appropriate control space,
and the optimality condition accounts for this fact. If all weights 
are positive and with control constraints, we have an optimal control problem whose structure (to the best of our knowledge) has never 
been investigated in PDE optimization. For this general case, we prove (see Theorem \ref{thm:OptimalitySystemSSN}) existence of Lagrange  multipliers, and derive the optimality system.

%Alfio: may be too many 'first of all'
Finally, in Section \ref{ss:unique} we address the uniqueness of optimal ensemble controls, in the special case $\g>0$ and $\delta=\nu=0$.
This part of the analysis exploits in a fundamental way the optimality system previously derived, and the characterisation of optimal controls as solutions to it.
In Theorem \ref{thm:opt-contr_u} we prove uniqueness of the optimal control in the control-unconstrained case; for this, we neeed some further assumptions
on the attracting potentials, namely we suppose $\theta$ and $\varphi$ to belong both to $H_1^1(\R^d)$. Notice that, to the best of our knowledge, our uniqueness result 
has no counterpart in the literature of PDE control problems. 
In the case that control constraints are present, in Theorem \ref{thm:opt_u-constr} we prove uniqueness of optimal controls, provided a smallness condition is satisfied.
Such a condition requires the time $T$ and the size of the data $\rho$, $g$, $\theta$ and $\vphi$ to be small enough, or the coefficient $\g$ to be sufficiently large.

We recall that all the results of Section \ref{s:ocp} that we have summarised above are obtained assuming $\theta, \, \varphi \in L^2(\R^d)$. However, in Section \ref{ss:confining} 
we show the necessary modifications to be performed in our arguments in order to address the case $\theta(x)=|x|^2$ and $\varphi(x)=|x|^2$.
%Alfio: shorten
In particular, such modifications require to solve the transport equation in weighted Sobolev spaces of negative index, see Lemma \ref{l:transp-neg} below.

\section{Theory of Liouville and transport equations with unbounded drifts} \label{sec-LiouvilleTransport}

In this section, we present results concerning the well-posedness theory of Liouville and transport equations in the class of Sobolev spaces.
In view of formula \eqref{controlmechanism}, we are especially interested in the case when the drift function $a$ may be unbouded, but has at most a linear growth at infinity.

In Subsection \ref{ss:classical}, we review the well-posedness theory in classical $H^m$ spaces, for $m\in\N$ (for simplicity); we will give
a self-contained presentation in this framework, and refer to e.g. \cite{DiPernaLions1989} and \cite{Crippa_PhD}
for details and more general results.
Afterwards in Section \ref{ss:weight}, motivated by the study of our optimal control problem, we extend these results to weighted Sobolev spaces. 
Notice that, although the statements and their proofs follow the main lines of the classical framework, 
to the best of our knowledge they are novel.

\subsection{Classical theory of Liouville and trasport equations} \label{ss:classical}

We start our discussion by considering the Liouville equation. Notice that 
our statements can be repeated in a very similar way (with just slight modifications) also for the adjoint Liouville problem, namely the transport equation: we treat this case in Paragraph \ref{sss:transport}, without giving details.

\subsubsection{Liouville equations in classical Sobolev spaces} \label{sss:H^m}
Consider the following Liouville initial-value problem 
\begin{equation} \label{eq:LiouvilleUnb} 
\left\{\begin{array}{ll}
        \d_t\rho\, +   \dive \bigl( a(t,x) \, \rho \bigr)\, = \, g(t,x) \qquad\qquad & \mbox{ in } \qquad [0,T]\times\R^d \\[1ex]
        \rho_{|t=0}\,=\, \rho_0 \qquad\qquad & \mbox{ on } \qquad \R^d\,.
       \end{array}
\right.
\end{equation}
%\begin{remark} 
Whenever attempting at solving equation \eqref{eq:LiouvilleUnb}, we have in mind its weak formulation. Namely, for all $\phi\, \in\, C_c^\infty\bigl(\RR^d \times [0, T[\,\bigr)$,
we want to verify the following equality
\begin{align} 
-\int_0^T\int_{\RR^d}\rho\,\partial_t \phi\,dx\,dt\,-\,\int_0^T \int_{\RR^d}\rho\, a\cdot\nabla\phi\,dx\,dt\,=\,
\int_0^T \int_{\RR^d}g\,\phi\,dx\,dt\, +\, \int_{\RR^d} \rho_0\,\phi(0)\,dx\,.
\label{eq:LweakUnb}
\end{align}
%\end{remark} 
The theory for this equation is classical, at least in the case of a bounded drift function $a$. 
The following well-posedness result is adapted to our needs from Theorem 3.19 in \cite{BCD}.
\begin{theorem}\label{thm:ex-u_L}
Let us fix $T>0$ and $m \in \NN$, and let $a \in L^1\bigl([0,T];C_b^{m+1}(\RR^d)\bigr)$, $\rho_0 \in H^m(\RR^d)$ and $g \in L^1\bigl([0,T];H^m(\RR^d)\bigr)$.

Then there exists a unique weak solution $\rho$ to \eqref{eq:LiouvilleUnb}, with $\rho\,\in\,C\bigl([0,T];H^{m}(\RR^d)\bigr)$.
Moreover, there exists a ``universal'' constant $C>0$, independent of $\rho_0$, $a$, $g$, $\rho$ and $T$, such that the following estimate holds true for any $t\in [0,T]$ : 
\begin{align*} %\label{est:en-est}
\standardNorm{\rho(t)}_{H^m}\, \leq \,C
\left(\standardNorm{\rho_0}_{H^m}\, +\, \int_0^t \standardNorm{g(\tau)}_{H^m}\, d\tau \right)\;\exp\left(C\,\int_0^t\standardNorm{\nabla a(\tau)}_{C^{m}_b}\,d\tau \right)\,.
\end{align*}
\end{theorem}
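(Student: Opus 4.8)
The plan is to follow the classical energy-method scheme for hyperbolic equations, with a Friedrichs-mollification / regularization argument to justify all manipulations. I would first establish the \emph{a priori} estimate for smooth solutions, then construct a solution by a vanishing-mollification (or Galerkin/mollified transport coefficient) approximation, and finally prove uniqueness. For the a priori estimate, the key is to differentiate the equation $D^\alpha$ times for every multi-index $\alpha$ with $|\alpha|\le m$, test against $D^\alpha\rho$ in $L^2(\R^d)$, and sum. The term coming from $\partial_t$ produces $\frac12\TTD\|D^\alpha\rho\|_{L^2}^2$. The delicate term is $\int_{\R^d} D^\alpha\dive(a\rho)\,D^\alpha\rho\,dx$; I would write $D^\alpha\dive(a\rho)=\dive(a\,D^\alpha\rho)+\big(D^\alpha\dive(a\rho)-\dive(a\,D^\alpha\rho)\big)$. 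The first piece, after integration by parts, contributes $-\frac12\int_{\R^d}(\dive a)\,|D^\alpha\rho|^2\,dx$, which is bounded by $\|\dive a\|_{L^\infty}\|D^\alpha\rho\|_{L^2}^2\le C\|\nabla a\|_{C^m_b}\|D^\alpha\rho\|_{L^2}^2$. The second piece is a commutator, and here one invokes the standard commutator (Moser-type) estimate $\big\|[D^\alpha,a\cdot\nabla]\rho+(\text{l.o.t.})\big\|_{L^2}\le C\|\nabla a\|_{C^m_b}\|\rho\|_{H^m}$, valid precisely because $a\in C^{m+1}_b$ so that $\nabla a\in C^m_b$; this is the point where the regularity hypothesis on $a$ is used in full. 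Adding the source term $\int D^\alpha g\,D^\alpha\rho\,dx\le\|g\|_{H^m}\|\rho\|_{H^m}$ and summing over $\alpha$ yields
\begin{equation*}
\TTD\|\rho(t)\|_{H^m}\;\le\;C\,\|\nabla a(t)\|_{C^m_b}\,\|\rho(t)\|_{H^m}\;+\;C\,\|g(t)\|_{H^m}\,,
\end{equation*}
and Grönwall's lemma in the integral (Bochner $L^1_T$) form gives exactly the stated bound with a universal constant.

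For existence, I would regularize the drift by convolution, $a_n:=a*\eta_n$ (mollifying in $x$, keeping the $L^1_T$ structure in time), so that for each fixed $n$ the transport coefficient is smooth and the problem \eqref{eq:LiouvilleUnb} can be solved classically along characteristics (the flow of $a_n$ is globally defined since $a_n$ is Lipschitz in $x$ uniformly on $[0,T]$ in the integrated sense), producing $\rho_n\in C([0,T];H^m)$. Since the mollified coefficients satisfy $\|\nabla a_n\|_{C^m_b}\le\|\nabla a\|_{C^m_b}$ uniformly in $n$, the a priori estimate above applies \emph{uniformly} in $n$, giving a bound for $(\rho_n)_n$ in $C([0,T];H^m)$; a standard stability estimate for the difference $\rho_n-\rho_\ell$ in $L^\infty_T(L^2)$ (using $\|a_n-a_\ell\|_{L^1_T(L^\infty)}\to0$ and the uniform $H^m$ bound to control the remainder) shows $(\rho_n)_n$ is Cauchy in $C([0,T];L^2)$; interpolating with the uniform $H^m$ bound upgrades convergence to $C([0,T];H^{m'})$ for every $m'<m$, which is more than enough to pass to the limit in the weak formulation \eqref{eq:LweakUnb}. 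The limit $\rho$ is then a weak solution, it inherits $\rho\in L^\infty_T(H^m)$ from the uniform bound and Banach–Alaoglu, and time continuity $\rho\in C([0,T];H^m)$ follows from weak continuity plus continuity of the $H^m$ norm (the latter from the a priori estimate applied forward and backward in time, since the equation is time-reversible for smooth coefficients). Uniqueness is immediate from the $L^2$ stability estimate applied with the same coefficient $a$ and zero data.

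The main obstacle is the commutator estimate in the unbounded-drift setting: one must make sure that the Moser/commutator bound $\|[D^\alpha,a\cdot\nabla]\rho\|_{L^2}\lesssim\|\nabla a\|_{C^m_b}\|\rho\|_{H^m}$ genuinely holds when $a$ itself is unbounded (only $\nabla a$ is bounded with its derivatives). The resolution is that every term in the Leibniz expansion of $D^\alpha(a\cdot\nabla\rho)-a\cdot\nabla D^\alpha\rho$ carries \emph{at least one derivative on $a$}, so only $\nabla a,\dots,D^m a$ — all in $L^\infty$ by hypothesis — appear, never $a$ bare; the one term where $a$ could in principle appear undifferentiated is exactly the transport term $a\cdot\nabla D^\alpha\rho$ that was split off and handled by the integration-by-parts identity above. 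Making this bookkeeping precise, and being careful that the integration by parts on $\int\dive(a\,D^\alpha\rho)\,D^\alpha\rho$ is legitimate for the (smooth, compactly supported or rapidly decaying) approximate solutions $\rho_n$, is the technical heart of the argument; everything else is routine Grönwall and functional-analytic limiting.
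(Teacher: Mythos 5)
A preliminary remark on context: the paper does not prove Theorem \ref{thm:ex-u_L} at all --- it is imported from Theorem 3.19 of \cite{BCD} --- so your argument must be judged on its own merits rather than against an in-paper proof. The scheme you propose (apply $D^\alpha$, split off the principal term $\dive(a\,D^\alpha\rho)$ and integrate by parts, bound the commutator by observing that every Leibniz term carries at least one derivative of $a$ so that only $\nabla a\in C^m_b$ is ever used, then Gr\"onwall; existence via mollified coefficients and a stability estimate) is the standard energy-method route and is essentially complete for $m\geq 1$.

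The genuine gap is at $m=0$ (which is included, since $m\in\N$) and, more generally, in your treatment of uniqueness. The ``$L^2$ stability estimate'' you declare uniqueness to be ``immediate'' from is an \emph{a priori} estimate: deriving it requires testing the equation against $\rho$ itself and integrating by parts, which is legitimate when $\rho\in H^1$ but not for a weak solution that is merely in $L^\infty_T(L^2)$ --- for such a $\rho$ the pairing $\int_{\R^d}\dive(a\rho)\,\rho\,dx$ is undefined and $\frac{d}{dt}\standardNorm{\rho}_{L^2}^2$ cannot be computed directly. Mollifying the coefficient $a$, which is all your construction does, does not remove this obstruction, because it sits on the solution. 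The missing ingredient is Friedrichs' commutator lemma: regularize the \emph{solution}, $\rho_n:=S_n\rho$, write the equation satisfied by $\rho_n$ with remainder $\dive\big([a,S_n]\rho\big)$, and prove that this remainder vanishes in $L^1_T(L^2)$ as $n\to+\infty$; this yields the energy identity, the strong time continuity of $t\mapsto\standardNorm{\rho(t)}_{L^2}$, and uniqueness in the weak class. This is precisely the content of Lemma \ref{l:commutator} and Propositions \ref{p:time-est}--\ref{p:L-stab} in the paper (stated there for the unbounded-drift extension, but needed already here when $m=0$). The same issue undermines your Cauchy-sequence argument at $m=0$: the source term $\dive\big((a_n-a_\ell)\rho_\ell\big)$ contains $(a_n-a_\ell)\cdot\nabla\rho_\ell$, and $\nabla\rho_\ell$ is not uniformly bounded in $L^2$ when the data are only $L^2$. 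For $m\geq1$ these difficulties disappear and your argument goes through (with the small caveat that continuity of the $H^m$ norm is cleanest obtained from absolute continuity of $t\mapsto\standardNorm{D^\alpha\rho(t)}_{L^2}^2$, whose derivative is explicitly in $L^1_T$, rather than from forward/backward estimates).
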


\begin{remark} \label{r:a}
In the case $m=0$, one can replace $\standardNorm{\nabla a}_{C^{0}_b}$ with $\|\div a\|_{L^\infty}$ inside the integral in the exponential term.
\end{remark}

Motivated by the study of our optimal control problem, see Section \ref{sec-DynModelsLiouville} and especially Definition \eqref{controlmechanism},
we are rather interested in the case when $a$ may be unbounded, with at most a linear growth at infinity. More precisely, given $m\in\N$, we assume
\begin{align}  \label{hyp:data} 
\begin{cases}
g \,\in \,L^1\bigl([0,T];H^m(\RR^d)\bigr) \quad\mbox{ and } \quad  \rho_0 \,\in\, H^m(\RR^d)
 \\[1ex]
a\, \in\, L^1\bigl([0,T];C^{m+1}(\R^d)\bigr)\,,\qquad\mbox{ with }\quad \nabla a\,\in\,L^1\bigl([0,T];C_b^{m}(\R^d)\bigr)\,. %\\ 
%\standardNorm{a(x,t)} \leq c(1+\standardNorm{x})~for~a.a. ~t\in [0,T]~\forall x \in \RR^d ,
\end{cases}
\end{align}

\begin{remark} \label{r:unb-drift}
Notice that hypotheses \eqref{hyp:data} imply, in particular, that $a(\cdot,\cdot)$ has at most linear growth in space at infinity: for almost every $(t,x)\,\in\,[0,T]\times\R^d$,
one has
$$
\left|a(t,x)\right|\,\leq\,C\,c(t)\,(1+|x|)\,,\qquad\qquad\mbox{ for }\qquad c\,=\,\left\|\nabla a\right\|_{L^\infty}\,\in\,L^1\bigl([0,T]\bigr)\,.
$$
The condition of at most linear growth at infinity can be proved to be somehow sharp for well-posedness, see e.g. \cite{DiPernaLions1989}, \cite{Crippa_PhD} and the references therein.
\end{remark}

The main result of this section is the following statement, proved by DiPerna and Lions in \cite{DiPernaLions1989} (see also \cite{Crippa_PhD}). However, we give here a self-contained presentation
of its proof.
\begin{theorem}
\label{thm:existenceUnboudedA}
Let $T>0$ and $m\in\N$ fixed, and let $a$, $\rho_0$ and $g$ satisfy hypotheses \eqref{hyp:data}.

Then there exists a unique solution $\rho\,\in\,C\bigl([0,T];H^{m}(\RR^d)\bigr)$ to problem \eqref{eq:LiouvilleUnb}.
Moreover, there exists a ``universal'' constant $C>0$, independent of $\rho_0$, $a$, $g$, $\rho$ and $T$, such that the following estimate holds true for any $t\in[0,T]$:
\begin{align} \label{est:en-estUnb}
\standardNorm{\rho(t)}_{H^m}\, \leq \,C
\left(\standardNorm{\rho_0}_{H^m}\, +\, \int_0^t \standardNorm{g(\tau)}_{H^m}\, d\tau \right)\;\exp\left(C\,\int_0^t\standardNorm{\nabla a(\tau)}_{C^{m}_b}\,d\tau \right)\,.
\end{align}
\end{theorem}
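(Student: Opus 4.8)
The plan is to reduce to the bounded-drift well-posedness of Theorem~\ref{thm:ex-u_L} by a truncation of the drift, to pass to the limit for existence and for the estimate \eqref{est:en-estUnb}, and finally to prove uniqueness by a direct energy argument. The two points where the hypothesis of (at most) linear growth of $a$, recorded in Remark~\ref{r:unb-drift}, is genuinely used are: (i) that the truncated drifts keep a \emph{uniform} bound on $\standardNorm{\nabla a}_{C^m_b}$, so that the constants coming from Theorem~\ref{thm:ex-u_L} do not degenerate; and (ii) that the formal energy computations can be justified in spite of the unboundedness of $a$, by inserting a spatial cut-off whose error term lives on a far-away annulus $\{R\le|x|\le 2R\}$ and is controlled there by $|a|\lesssim R$. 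I expect (ii) to be the main obstacle.

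For existence and \eqref{est:en-estUnb}, fix a smooth map $\zeta:\R^d\to\R^d$ with $\zeta=\Id$ on the unit ball, $\zeta(\R^d)\subset B_2$ and $\standardNorm{\nabla\zeta}_{L^\infty}\le1$, and set $\zeta_n(x):=n\,\zeta(x/n)$ and $a_n(t,\cdot):=a\bigl(t,\zeta_n(\cdot)\bigr)$. Then $a_n\in L^1\bigl([0,T];C_b^{m+1}(\R^d)\bigr)$ (being valued in the bounded set $a(t,B_{2n})$); since $\nabla a_n(t,\cdot)=(\nabla a)(t,\zeta_n(\cdot))\,\nabla\zeta_n(\cdot)$, $\standardNorm{\nabla\zeta_n}_{L^\infty}\le1$, and the derivatives of $\zeta_n$ of order $\ge2$ are $O(1/n)$, the chain rule gives $\standardNorm{\nabla a_n(t)}_{C^m_b}\le(1+C/n)\,\standardNorm{\nabla a(t)}_{C^m_b}$; moreover $a_n=a$ on each fixed ball for $n$ large. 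By Theorem~\ref{thm:ex-u_L}, for each $n$ there is a unique $\rho_n\in C\bigl([0,T];H^m(\R^d)\bigr)$ solving \eqref{eq:LiouvilleUnb} with drift $a_n$, and, the constant being universal,
\begin{equation*}
\standardNorm{\rho_n(t)}_{H^m}\le C\Bigl(\standardNorm{\rho_0}_{H^m}+\int_0^t\standardNorm{g(\tau)}_{H^m}\,d\tau\Bigr)\exp\Bigl(C\,(1+C/n)\int_0^t\standardNorm{\nabla a(\tau)}_{C^m_b}\,d\tau\Bigr).
\end{equation*}
Hence $(\rho_n)_n$ is bounded in $C\bigl([0,T];H^m(\R^d)\bigr)$, and from the equation $\d_t\rho_n=g-\dive(a_n\rho_n)$ is bounded in $L^1\bigl([0,T];H^{m-1}(\R^d)\bigr)$, so $(\rho_n)_n$ is equicontinuous into $H^{m-1}$. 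A standard (weak Arzel\`a--Ascoli) argument gives, up to a subsequence, a limit $\rho$ with $\rho_n(t)\rightharpoonup\rho(t)$ in $H^m(\R^d)$ for every $t\in[0,T]$ and $\rho_n\weakStarConv\rho$ in $L^\infty\bigl([0,T];H^m(\R^d)\bigr)$. One then passes to the limit in the weak formulation \eqref{eq:LweakUnb}: for a test function $\phi$ supported in $B_{R_0}\times[0,T[$ we have $a_n=a$ on $\supp\phi$ for $n\ge R_0$, and the weak convergence suffices in every term; thus $\rho$ solves \eqref{eq:LiouvilleUnb}. Lower semicontinuity of the $H^m$-norm together with $(1+C/n)\to1$ yields \eqref{est:en-estUnb}. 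For the time regularity, $\d_t\rho\in L^1\bigl([0,T];H^{m-1}(\R^d)\bigr)$ gives first $\rho\in C_w\bigl([0,T];H^m(\R^d)\bigr)$, upgraded to $\rho\in C\bigl([0,T];H^m(\R^d)\bigr)$ by the standard argument (weak continuity plus continuity of $t\mapsto\standardNorm{\rho(t)}_{H^m}$, the latter read off the energy identity, localised on subintervals).

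For uniqueness, let $w$ be the difference of two solutions with the same data, so $\d_t w+\dive(aw)=0$, $w_{|t=0}=0$, $w\in C\bigl([0,T];H^m(\R^d)\bigr)$. Multiplying by $\chi_R^2\,w$ with $\chi_R(x)=\chi(x/R)$, $\chi\equiv1$ on $B_1$, $\supp\chi\subset B_2$, and integrating by parts (licit since $\chi_R$ has compact support) gives
\begin{equation*}
\frac{d}{dt}\int_{\R^d}\chi_R^2\,w^2\,dx=\int_{\R^d}\bigl(2\,\chi_R\,\nabla\chi_R\cdot a-\chi_R^2\,\dive a\bigr)\,w^2\,dx\le C\,c(t)\int_{R\le|x|\le2R}w^2\,dx+\standardNorm{\dive a(t)}_{L^\infty}\int_{\R^d}\chi_R^2\,w^2\,dx,
\end{equation*}
where $|\nabla\chi_R|\le C/R$ on the annulus and $|a(t,x)|\le C\,c(t)\,(1+|x|)\le C\,c(t)\,R$ there, with $c=\standardNorm{\nabla a}_{L^\infty}\in L^1([0,T])$ by Remark~\ref{r:unb-drift}. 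Integrating in time and letting $R\to+\infty$, the annulus term vanishes by dominated convergence (as $w\in C([0,T];L^2)$), leaving $\standardNorm{w(t)}_{L^2}^2\le\int_0^t\standardNorm{\dive a(\tau)}_{L^\infty}\,\standardNorm{w(\tau)}_{L^2}^2\,d\tau$; since $w(0)=0$, Gr\"onwall's lemma forces $w\equiv0$.

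Finally, the same cut-off device, applied to $D^\alpha\rho$ for $|\alpha|\le m$ after commuting $D^\alpha$ with the transport operator $\d_t+a\cdot\nabla+\dive a$, provides a direct derivation of \eqref{est:en-estUnb} for smooth enough solutions: the commutator remainder $\dive(a\,D^\alpha\rho)-D^\alpha\dive(a\rho)$ only involves derivatives of $a$ of order between $1$ and $m+1$ times derivatives of $\rho$ of order $\le m$, hence is bounded in $L^2$ by $C\,\standardNorm{\nabla a(t)}_{C^m_b}\,\standardNorm{\rho(t)}_{H^m}$ --- this being exactly where the assumption $\nabla a\in L^1\bigl([0,T];C_b^m(\R^d)\bigr)$ is used --- and summing over $\alpha$ and applying Gr\"onwall gives the claim. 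In the case $m=0$ (where $\standardNorm{\nabla a}_{C^0_b}$ is replaced by $\standardNorm{\dive a}_{L^\infty}$, cf.\ Remark~\ref{r:a}) the integration by parts in the uniqueness step is not directly justified, and one instead invokes the renormalisation technique of DiPerna and Lions.
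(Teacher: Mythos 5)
Your proposal is correct in substance and follows the same overall architecture as the paper: truncate the drift so as to reduce to the bounded-coefficient Theorem~\ref{thm:ex-u_L}, pass to the limit in the weak formulation using that the truncations agree with $a$ on compact sets, obtain \eqref{est:en-estUnb} by lower semicontinuity, and justify the time-continuity and uniqueness by energy arguments that, at the $L^2$ level, require the DiPerna--Lions renormalisation/commutator device. Two of your choices differ genuinely from the paper's. First, you truncate by composition, $a_n(t,\cdot)=a\bigl(t,\zeta_n(\cdot)\bigr)$, whereas the paper multiplies by a cut-off, $a_M=\chi(\cdot/M)\,a$ (see \eqref{eq:aM}); both yield uniform bounds on $\standardNorm{\nabla a_n}_{C^m_b}$ and agreement with $a$ on fixed balls, but yours gives the sharper factor $(1+C/n)\to1$ without invoking the linear growth of $a$ at this stage, while the multiplicative truncation needs $|a|\lesssim M$ on $B(0,2M)$ to control the term $\frac1M\,\nabla\chi(\cdot/M)\,a$ (cf.\ \eqref{ub:Da_M}). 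Second, for uniqueness you use a direct far-field cut-off $\chi_R$, absorbing the annulus error by $|\nabla\chi_R|\lesssim 1/R$ against $|a|\lesssim c(t)\,R$; this is a clean alternative to the paper's route through Lemma~\ref{l:commutator} and Proposition~\ref{p:time-est}, and it is fully rigorous for $m\geq1$, where $\nabla w\in L^2$ makes the integration by parts licit.

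Two small imprecisions should be fixed. (i) $\partial_t\rho_n=g-\dive(a_n\rho_n)$ is bounded only in $L^1\bigl([0,T];H^{m-1}_{\rm loc}\bigr)$, not in $L^1\bigl([0,T];H^{m-1}(\R^d)\bigr)$: the term $a_n\cdot\nabla\rho_n$ carries the factor $\sup_{B(0,2n)}|a|\sim n$. The local version is all that the Arzel\`a--Ascoli step needs, so this is harmless, but it must be stated locally. (ii) The renormalisation device is needed not only for $m=0$: to upgrade $C_w\bigl([0,T];H^m\bigr)$ to $C\bigl([0,T];H^m\bigr)$ one needs the energy identity for $D^\alpha\rho$ with $|\alpha|=m$, and $D^\alpha\rho$ is merely an $L^2$ solution of a continuity equation (with an $L^1_T(L^2)$ commutator source), so that $a\cdot\nabla D^\alpha\rho$ is not a function and the cut-off integration by parts is not directly justified at top order either. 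Your final paragraph already contains the right reduction --- commute $D^\alpha$ through the equation and treat each $D^\alpha\rho$ by the $m=0$ theory --- so the only correction is to say explicitly that the continuity of the top-order norm, exactly like the $m=0$ case, goes through the commutator lemma rather than through a direct integration by parts.
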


We notice that, also in this case, Remark \ref{r:a} applies.

\medbreak
The rest of this paragraph is devoted to the proof of Theorem \ref{thm:existenceUnboudedA}.
We proceed by truncation, and approximate our problem with a family of Liouville equations with bounded coefficients,
to which the classical theory (see Theorem \ref{thm:ex-u_L} above) applies. Then, we pass to the limit in the approximation parameter, proving convergence to a solution of the original equation.
We conclude by discussing time regularity and uniqueness issues.

\paragraph{Existence.}
The first step is to construct a suitable 
truncation of the drift function. For this purpose,  let us introduce a smooth cut-off function $\chi \in C^\infty_c(\RR^d)$ such that $\chi$ is radially decreasing,
$\chi(x)=1$ for $|x|\leq1$ and $\chi(x)=0$ for $|x|\geq2$. For all real $M>0$, we define
\begin{align}
a_M(t,x)\,:=\, \chi\Big( \frac{x}{M} \Big)\,a(t,x)\,.
\label{eq:aM}
\end{align}
Notice that, by assumptions \eqref{hyp:data}, we immediately get that $a_M\,\in\,L^1_T(C^{m+1}_b)$ for all $M>0$. Moreover, in view of Remark \ref{r:unb-drift},
an easy computation shows that 
\begin{equation} \label{ub:Da_M}
\bigl(\nabla a_M\bigr)_M\,\subset\,L^1_T(C^m_b)\,,\qquad\qquad\mbox{ with }\qquad \left\|\nabla a_M\right\|_{L^1_T(L^\infty)}\,\leq\,C\,,
\end{equation}
for a suitable constant $C>0$ independent of $M$.
%Recall that this notation means, in particular, that the family $\bigl(\nabla a_M\bigr)_M$ is uniformly bounded in $L^1_T(C^m_b)$.
Indeed, denoting by $\mathds{1}_A$ the characteristic function of a set $A\subset\R^d$ and by $B(x,R)$ the ball in $\R^d$ of center $x$ and radius $R>0$, we can compute
\begin{align*}
\left\|\nabla a_M\right\|_{L^\infty}\,&=\,\left\|\frac{1}{M}\,\nabla\chi\Big(\frac{x}{M} \Big)\,a\, +\, \chi\Big(\frac{x}{M} \Big)\,\nabla a\right\|_{L^\infty} \\
&\leq\,C\,\frac{1}{M}\,\left\|a\,\mathds{1}_{B(0,2M)}\right\|_{L^\infty}\,+\,\left\|\nabla a\right\|_{L^\infty}\;\leq\,C\,.
\end{align*}
The bounds for higher order derivatives follow the same lines, after noticing that, at each order of differentiation, we gain a factor $1/M$ in front of $a$.

At this point, for each fixed $M>0$, we can consider the truncated problem
\begin{align}
\label{eq:LM} 
\begin{cases}
	\partial_t \rho\, +\, \dive \left(a_M\,\rho\right)\, =\, g \\
	\rho_{|t=0}\, =\, \rho_0\,,
\end{cases}.
\end{align}
which possesses a unique weak solution $\rho_M\,\in\,C\bigl([0,T];H^m(\R^d)\bigr)$, by virtue of Theorem \ref{thm:ex-u_L}. Moreover, each $\rho_M$ satisfies
the energy estimate \eqref{est:en-estUnb}, up to replacing $a$ by $a_M$. Thus, we have 
\begin{align} \label{est:en-est_M}
\standardNorm{\rho_M(t)}_{H^m}\, \leq \,C
\left(\standardNorm{\rho_0}_{H^m}\, +\, \int_0^t \standardNorm{g(\tau)}_{H^m}\, d\tau \right)\;\exp\left(C\,\int_0^t\standardNorm{\nabla a_M(\tau)}_{C^{m}_b}\,d\tau \right)\,.
\end{align}
Thanks to property \eqref{ub:Da_M}, we deduce the uniform bounds
\begin{equation} \label{ub:rho^M}
\bigl(\rho_M\bigr)_M\,\subset\,L^\infty\bigl([0,T];H^m(\R^d)\bigr)\,.
\end{equation}

%\subsubsection{Convergence to a solution of the original equation} \label{sss:conv}

%Alfio: may be 'obtain' more appropriate than 'gather' 
As a consequence of \eqref{ub:rho^M}, we obtain the existence of a $\rho\,\in\,L^\infty_T(H^m)$ such that, up to the extraction of a subsequence, one has
$$
\rho_M\,\stackrel{*}{\rightharpoonup}\,\rho\qquad\qquad\mbox{ in }\qquad L^\infty_T(H^m)\,.
$$

Our next goal is to show that $\rho$ actually solves problem \eqref{eq:LiouvilleUnb} in the weak form, see equation \eqref{eq:LweakUnb}.
%Alfio: purpose
For this purpose, we need to pass to the limit, for $M\ra+\infty$, in the weak formulation
of equation \eqref{eq:LM}. Of course, it is enough to prove the convergence in the case of minimal regularity, namely for $m=0$. Thus, we restrict to this case in the next argument.

We start by recalling that $\rho_M$ is a weak solution to \eqref{eq:LM}: given any $\phi \in C^\infty_c\bigl(\RR^d \times [0,T[\,\bigr)$, we have
\begin{align}
-\int_0^T \SPI \rho_M\, \partial_t \phi\,dx\,dt \,-\, \int_0^T \SPI \rho_M\, a_M \cdot \nabla \phi\,dx\,dt \,=\, \int_0^T \SPI g\, \phi\,dx\,dt\, +\, \SPI \rho_0 \,\phi(0)\,dx\,.
\label{eq:LMweak}
\end{align}
The only term which presents some difficulties is the term $\rho_M\,a_M$, and thus we focus on it. We start by proving the following lemma.
\begin{lemma} \label{lem:LocConvergenceAM}
For all compact set $K \subset \RR^d$, it holds
\begin{align*}
\left\|a_M -a\right\|_{L^1_T(L^\infty(K))}\, \longrightarrow\, 0\qquad\qquad \mbox{ as }\quad M\,\ra\,+\infty\,.
\end{align*}
\end{lemma}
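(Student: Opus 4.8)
The plan is to prove the local uniform convergence of $a_M$ to $a$ by exploiting the fact that the cut-off $\chi(x/M)$ stabilises to $1$ on any fixed compact set, once $M$ is large enough, while the time integrability comes from hypothesis \eqref{hyp:data}. More precisely, fix a compact set $K\subset\R^d$ and choose $R>0$ such that $K\subset B(0,R)$. For every $M\geq R$ we have $x/M\in B(0,1)$ for all $x\in K$, hence $\chi(x/M)=1$ on $K$, and therefore $a_M(t,x)=a(t,x)$ pointwise for all $(t,x)\in[0,T]\times K$. This already gives $\left\|a_M(t)-a(t)\right\|_{L^\infty(K)}=0$ for every $t\in[0,T]$ as soon as $M\geq R$, so the $L^1_T(L^\infty(K))$ norm is identically zero for $M$ large, and the convergence to $0$ follows trivially.

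Because the statement is essentially immediate with the chosen radially symmetric cut-off satisfying $\chi\equiv1$ on $B(0,1)$, the only subtlety worth spelling out is the dependence on $K$: the threshold $M\geq R$ is not uniform in $K$, but this is harmless since in the subsequent passage to the limit (in the weak formulation \eqref{eq:LMweak}) the test function $\phi$ has compact support, so one applies the lemma with $K=\Supp\phi$ fixed. I would also note, for completeness, that even without using $\chi\equiv1$ near the origin one could argue by dominated convergence: on $K$ one has $\chi(x/M)\to1$ pointwise as $M\to+\infty$, hence $\left|a_M(t,x)-a(t,x)\right|=\left|1-\chi(x/M)\right|\,|a(t,x)|\to0$ pointwise, and this quantity is bounded by $|a(t,x)|\leq\|a(t)\|_{C^{m+1}(K)}\,$ which, by \eqref{hyp:data}, is an $L^1$ function of $t$ on $[0,T]$; then the dominated convergence theorem yields the claim. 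The version using $\chi\equiv1$ on $B(0,1)$ is cleaner, so that is the one I would write down.

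There is essentially no main obstacle here: the lemma is a routine consistency property of the truncation, recorded separately only because it is the one place where the convergence $a_M\to a$ needs to be justified carefully before passing to the limit in the nonlinear term $\rho_M\,a_M$. The real work in the proof of Theorem \ref{thm:existenceUnboudedA} lies elsewhere, namely in combining this local convergence of $a_M$ with the weak-$*$ convergence of $\rho_M$ in $L^\infty_T(H^m)$ (which only gives weak convergence, not strong) to identify the limit of the product $\rho_M\,a_M$; that step will require a compactness argument (e.g. using the equation to get time-equicontinuity of $\rho_M$ and Aubin--Lions, or arguing directly on the weak formulation against the fixed test function $\phi$). But as far as the stated lemma is concerned, the argument is the two-line observation above.
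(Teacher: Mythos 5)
Your argument is exactly the paper's proof: choose $R>0$ with $K\subset B(0,R)$ and observe that, since $\chi\equiv1$ on $B(0,1)$, one has $a_M(t)\equiv a(t)$ on $K$ for all $M$ large (the paper takes $M\geq R+1$), so the norm vanishes identically for such $M$. The additional dominated-convergence remark is a fine alternative but unnecessary; the proposal is correct and matches the paper.
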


\begin{proof}
Let $K \subset \RR^d$ be a compact set, and let $R>0$ such that $K \subset B(0,R)$. The claim of the lemma follows then by noticing that, by definitions, for all $M \geq R+1$ one has
$a_M(t)\equiv a(t)$ over $K$, for almost every $t\in[0,T]$.
\end{proof}

Let now $K$ be the support of $\phi$, where $\phi$ is the test function appearing in \eqref{eq:LMweak}. Thanks to uniform bounds, to the strong convergence of $a_M$ to $a$ in
$L^1_T\bigl(L^\infty(K)\bigr)$ (given by Lemma \ref{lem:LocConvergenceAM}) and the weak-$*$ convergence of $\rho_M$ to $\rho$ in $L^\infty_T(L^2)$, we finally deduce that
$\bigl(\rho_M\,a_M\bigr)_M$ is uniformly bounded in $L^1_T\bigl(L^2(K)\bigr)$, and $\rho_M\,a_M\,\stackrel{*}{\rightharpoonup}\,\rho\,a$ in that space, in the limit when $M\ra+\infty$.

%Alfio: removed 'indeed' 

In the end, we have proved that the limit function $\rho$ is a weak solution to \eqref{eq:LiouvilleUnb}. Observe that, thanks to \eqref{est:en-est_M}, the uniform bounds \eqref{ub:Da_M}
and lower semicontinuity of the norm, we also deduce that $\rho$ verifies the energy estimate \eqref{est:en-estUnb}.

%\subsubsection{Uniqueness and time regularity of the solution} \label{sss:unique-time}
\paragraph{Time regularity and uniqueness.}
It remains to prove uniqueness of solutions and their time regularity. They are both consequences of the next proposition.
\begin{prop} \label{p:time-est}
Let $T>0$ and take $m\in\N$. Let $\rho\,\in\,L^\infty_T(H^m)$ be a weak solution to equation \eqref{eq:LiouvilleUnb} under hypotheses \eqref{hyp:data}.

Then $\rho\,\in\,C\bigl([0,T];H^m(\R^d)\bigr)$ and it verifies the energy estimate \eqref{est:en-estUnb}.
\end{prop}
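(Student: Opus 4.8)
The plan is to establish the two claims — time continuity with values in $H^m$, and validity of the energy estimate \eqref{est:en-estUnb} — by the standard approach of first proving a bound in a weaker topology (time continuity with values in $H^m$-weak, plus the $L^\infty_T(H^m)$ bound), then upgrading weak continuity to strong continuity via a norm-continuity argument. First I would observe that, since $\rho\in L^\infty_T(H^m)$ solves \eqref{eq:LiouvilleUnb} and the data satisfy \eqref{hyp:data}, the equation itself gives $\partial_t\rho = g - \dive(a\rho)$, and the right-hand side lies in $L^1_T(H^{m-1})$ because $a\in L^1_T(C^{m+1})$ with $\nabla a\in L^1_T(C^m_b)$ controls the product $a\rho$ in $H^m$ locally and $\dive(a\rho)$ in $H^{m-1}$ globally (Remark \ref{r:unb-drift} handling the linear growth). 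Hence $\rho\in C([0,T];H^{m-1})$, and together with $\rho\in L^\infty_T(H^m)$ this yields, by a classical interpolation/weak-continuity lemma (see e.g. Chapter 3 of \cite{BCD}), that $\rho\in C_w([0,T];H^m)$, i.e. $t\mapsto\rho(t)$ is continuous into $H^m$ equipped with the weak topology. In particular $\rho(t)$ is well defined for every $t$, not just almost every $t$, and $\|\rho(t)\|_{H^m}\le\liminf$ of nearby values.

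Next I would derive the a priori energy estimate at the level of sufficiently smooth solutions and then argue it passes to our weak solution. The cleanest route is to note that our $\rho$ is exactly the solution produced by the truncation scheme in the Existence step: the limit function was shown there to satisfy \eqref{est:en-estUnb} by lower semicontinuity, so any weak solution in $L^\infty_T(H^m)$ that \emph{coincides with it} inherits the estimate — but to close this we still need uniqueness, which is the real content. For uniqueness I would take two solutions $\rho_1,\rho_2\in L^\infty_T(H^m)$ with the same data, set $\sigma=\rho_1-\rho_2\in L^\infty_T(H^m)$ solving the homogeneous equation $\partial_t\sigma+\dive(a\sigma)=0$ with $\sigma_{|t=0}=0$, and run the basic $L^2$ energy estimate: formally, $\frac12\frac{d}{dt}\|\sigma(t)\|_{L^2}^2 = \frac12\int(\dive a)\,\sigma^2\,dx \le \frac12\|\dive a(t)\|_{L^\infty}\|\sigma(t)\|_{L^2}^2$, whence $\|\sigma(t)\|_{L^2}^2\le\|\sigma(0)\|_{L^2}^2\exp(\int_0^t\|\dive a\|_{L^\infty})=0$ by Gr\"onwall (using $\nabla a\in L^1_T(C^0_b)$, and Remark \ref{r:a}). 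The main obstacle is making this computation rigorous: $\sigma$ is only an $L^\infty_T(L^2)$ weak solution, the product $a\sigma$ need not be integrable without care (linear growth of $a$), and multiplying the equation by $\sigma$ is not licit at this regularity. This is precisely the DiPerna–Lions difficulty, and the standard remedy is a commutator/regularization argument: mollify in space, $\sigma_\varepsilon=\sigma*\eta_\varepsilon$, which solves $\partial_t\sigma_\varepsilon+\dive(a\sigma_\varepsilon)=r_\varepsilon$ with commutator remainder $r_\varepsilon=\dive(a\sigma_\varepsilon)-\dive(a\sigma)*\eta_\varepsilon$, establish the DiPerna–Lions lemma that $r_\varepsilon\to0$ in $L^1_T(L^2_{loc})$ (this uses exactly $\nabla a\in L^1_T(C^0_b)$ and the linear-growth bound to control the far-field contribution), then carry out the energy estimate on $\sigma_\varepsilon$ — now legitimate since $\sigma_\varepsilon$ is smooth in $x$ — possibly with an additional cutoff $\chi(x/R)$ in space to handle integrability at infinity, and let $\varepsilon\to0$ and then $R\to+\infty$, using $\int(\dive a)\chi(x/R)\sigma_\varepsilon^2$ and the extra term $\int a\cdot\nabla\chi(x/R)\,\sigma_\varepsilon^2\le (C/R)\int_{|x|\le 2R}|a|\sigma_\varepsilon^2\le C\|c\|_{L^1}\|\sigma\|_{L^\infty_T L^2}^2$-type bounds that vanish or stay controlled. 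Since much of this machinery is already deployed in the classical theory recalled in Subsection \ref{ss:classical}, I would cite it rather than redo it.

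Finally, with uniqueness in hand, $\rho$ equals the solution built by truncation, which satisfies \eqref{est:en-estUnb}, and it remains only to upgrade $C_w([0,T];H^m)$ to $C([0,T];H^m)$. For this I would show that $t\mapsto\|\rho(t)\|_{H^m}$ is continuous: combined with weak continuity, continuity of the norm forces strong continuity in a Hilbert space. Continuity of the norm follows from the energy estimate applied with arbitrary initial time $t_0\in[0,T]$ in place of $0$ (so both $\limsup_{t\to t_0^+}\|\rho(t)\|_{H^m}\le\|\rho(t_0)\|_{H^m}$, and by time-reversal or by the backward solvability of the transport equation also the left limit and the reverse inequality), together with weak lower semicontinuity giving $\|\rho(t_0)\|_{H^m}\le\liminf_{t\to t_0}\|\rho(t)\|_{H^m}$. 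Pinching these yields $\lim_{t\to t_0}\|\rho(t)\|_{H^m}=\|\rho(t_0)\|_{H^m}$, hence $\rho\in C([0,T];H^m(\R^d))$, and the energy estimate \eqref{est:en-estUnb} has already been recorded. I expect the commutator estimate in the uniqueness step to be the main obstacle, though it is by now classical; everything else is soft functional analysis.
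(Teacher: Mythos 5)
Your plan rests on the same core ingredient as the paper's proof --- the DiPerna--Lions commutator lemma (Lemma \ref{l:commutator}) combined with $L^2$ energy estimates made rigorous by mollification --- but it is organized quite differently. The paper mollifies the \emph{given} weak solution, setting $\rho_n:=S_n\rho$, which solves $\partial_t\rho_n+\dive(a\,\rho_n)=g_n+r_n$ with $r_n=\dive\big([a,S_n]\rho\big)\to0$ in $L^1_T(L^2)$; it shows each $\rho_n\in C_T(L^2)$ by exactly your ``weak continuity plus norm continuity'' pinching, but applied to the spatially smooth $\rho_n$ (where pairing the equation with $\rho_n$ is licit), and then proves that $(\rho_n)_n$ is a \emph{Cauchy sequence} in $C_T(L^2)$ by applying the energy estimate to $\rho_n-\rho_m$. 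Since $S_n\rho\to\rho$, this yields $\rho\in C_T(L^2)$ and \eqref{est:en-estUnb} directly for an arbitrary weak solution, and uniqueness then falls out as a corollary (Proposition \ref{p:L-stab}). You instead prove uniqueness first (the same commutator argument, run on the difference of two solutions), import the estimate by identifying $\rho$ with the truncation-built solution, and only then upgrade $C_w\bigl([0,T];H^m\bigr)$ to $C\bigl([0,T];H^m\bigr)$ at the level of $\rho$ itself. Both routes work; the Cauchy-sequence device buys strong time continuity without restarting the equation from an arbitrary initial time $t_0$, which your pinching step requires and which moreover needs the differential-inequality form of the estimate (constant $1$ in front of $\|\rho(t_0)\|_{H^m}$ as $t\to t_0$), not the stated form with a multiplicative constant $C$. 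Two small points to fix in a full write-up: because of the linear growth of $a$, $\dive(a\rho)$ lies only in $L^1_T(H^{m-1}_{\rm loc})$, not in $H^{m-1}$ globally (the local version still suffices for weak continuity, as the paper observes for $\rho_n$); and the estimate inherited from the truncation scheme by lower semicontinuity holds a priori only for a.e.\ $t$, so it must be propagated to every $t$ using the weak continuity you established.
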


We present the proof of the previous claim just in the minimal regularity case, namely for $m=0$. The general case follows by the same token.
To start with, let us state a classical lemma (see e.g. \cite{DiPernaLions1989} for details), whose proof is recalled in the appendix.
For this, we fix a function $s\,\in\,C^\infty_c(\R^d)$, with $s\equiv1$ for $|x|\leq1$ and $s\equiv0$ for $|x|\geq2$, $s$ radailly decreasing and such that $\int_{\R^d}s\,=\,1$.
For all $n\in\N$, we then define $s_n(x)\,:=\,n^d\,s(nx)$. We refer to the family $\big(s_n\big)_n$ as a family of standard mollifiers.
\begin{lemma} \label{l:commutator}
Let $\bigl(s_n\bigr)_n$ be a family of standard mollifiers, as constructed here above. For all $n\in\N$, define the operator $S_n$, acting on tempered distributions over $\R_+\times\R^d$,
by the formula
$$
S_n\rho\,:=\,s_n\,*_x\,\rho\,,
$$
where the symbol $*_x$ means that the convolution is taken only with respect to the space variable.
For given $\rho\,\in\,L^\infty_T(L^2)$ and $a\,\in\,L^1_T(C^1)$ such that $\nabla a\,\in\,L^1_T(C_b)$, we set, for all $n\in\N$ and $1\leq j\leq d$,
$$
r^j_n(\rho)\,:=\,\d_j\left(\big[a,S_n\big]\rho\right)\,.
$$

Then, for all $j$ fixed, we have $\big(r^j_n\big)_n\,\subset\,L^1_T(L^2)$; moreover, for $n\ra+\infty$, we have the strong convergence $r^j_n\,\longrightarrow\,0$ in $L^1_T(L^2)$.
\end{lemma}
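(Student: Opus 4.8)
The plan is to make the commutator explicit, read off from the resulting expression the uniform bound $\bigl(r^j_n(\rho)\bigr)_n\subset L^1_T(L^2)$, and then establish the strong convergence by a density argument after reducing the matter to a statement at almost every fixed time. Writing $[a,S_n]\rho$ componentwise (so that the $j$-th contribution is $a^j\,S_n\rho-S_n(a^j\rho)$) and using $\partial_j(s_n*f)=(\partial_j s_n)*f$, I would first obtain
\[
r^j_n(\rho)\,=\,(\partial_j a^j)\,S_n\rho\,+\,A_n(\rho)\,,\qquad
A_n(\rho)(t,x)\,:=\,\int_{\R^d}(\partial_j s_n)(x-y)\,\bigl(a^j(t,x)-a^j(t,y)\bigr)\,\rho(t,y)\,dy\,.
\]
The change of variables $z=n(x-y)$, together with $(\partial_j s_n)(w)=n^{d+1}(\partial_j s)(nw)$, turns $A_n$ into
\[
A_n(\rho)(t,x)\,=\,\int_{\R^d}(\partial_j s)(z)\;n\bigl(a^j(t,x)-a^j(t,x-z/n)\bigr)\,\rho(t,x-z/n)\,dz\,,
\]
so that the mean value theorem yields both the bound $\bigl|n(a^j(t,x)-a^j(t,x-z/n))\bigr|\le\|\nabla a^j(t)\|_{L^\infty}\,|z|$ and the pointwise limit $n(a^j(t,x)-a^j(t,x-z/n))\to\nabla a^j(t,x)\cdot z$ as $n\to+\infty$.

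From here the uniform estimate is routine: using $\|S_n\rho(t)\|_{L^2}\le\|\rho(t)\|_{L^2}$ for the first term and Minkowski's integral inequality for $A_n$ (the weight $|(\partial_j s)(z)|\,|z|$ being integrable since $s\in C^\infty_c$), one finds $\|r^j_n(\rho)(t)\|_{L^2}\le C\,\|\nabla a(t)\|_{L^\infty}\,\|\rho\|_{L^\infty_T(L^2)}$ with $C$ independent of $n$ and $t$. Integrating in time and invoking the hypotheses $\nabla a\in L^1_T(C_b)$ and $\rho\in L^\infty_T(L^2)$ gives $\bigl(r^j_n(\rho)\bigr)_n\subset L^1_T(L^2)$, and exhibits a fixed $L^1_T$ function dominating $t\mapsto\|r^j_n(\rho)(t)\|_{L^2}$ uniformly in $n$.

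For the convergence, that domination and Lebesgue's theorem in $t$ reduce the problem to showing $\|r^j_n(\rho)(t)\|_{L^2}\to0$ for a.e.\ fixed $t$; I would fix such a $t$ and drop it from the notation. If $\rho\in C^\infty_c(\R^d)$, then in the last display one may pass to the limit by dominated convergence in $z$ (dominating function $\|\nabla a^j\|_{L^\infty}\,\|\rho\|_{L^\infty}\,|(\partial_j s)(z)|\,|z|$), obtaining
\[
A_n(\rho)(x)\,\longrightarrow\,\rho(x)\,\int_{\R^d}(\partial_j s)(z)\,\bigl(\nabla a^j(x)\cdot z\bigr)\,dz\,=\,-\,(\partial_j a^j)(x)\,\rho(x)\,,
\]
the last equality being integration by parts together with $\int_{\R^d}s=1$; simultaneously $(\partial_j a^j)\,S_n\rho\to(\partial_j a^j)\,\rho$ pointwise, so $r^j_n(\rho)\to0$ pointwise. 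Since all the $r^j_n(\rho)$ are supported in one fixed compact neighbourhood of $\operatorname{supp}\rho$ and uniformly bounded there, a further dominated convergence in $x$ gives $\|r^j_n(\rho)\|_{L^2}\to0$. For a general $\rho\in L^2(\R^d)$, I would approximate $\rho$ by $\rho_\delta\in C^\infty_c(\R^d)$ in $L^2$ and use linearity together with the $n$-uniform bound above: $\|r^j_n(\rho)\|_{L^2}\le\|r^j_n(\rho_\delta)\|_{L^2}+C\,\|\nabla a(t)\|_{L^\infty}\,\|\rho-\rho_\delta\|_{L^2}$, whence $\limsup_n\|r^j_n(\rho)\|_{L^2}\le C\,\|\nabla a(t)\|_{L^\infty}\,\|\rho-\rho_\delta\|_{L^2}$; letting $\rho_\delta\to\rho$ settles the a.e.-in-$t$ statement, and the uniform $L^1_T$ domination upgrades it to $r^j_n(\rho)\to0$ in $L^1_T(L^2)$.

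The main obstacle is exactly the passage from pointwise to \emph{strong} $L^2$ convergence in this last step: the pointwise cancellation of the two terms is immediate, but turning it into an $L^2$ statement forces one through the density reduction — which itself rests on the $n$-uniform estimate, hence ultimately on the at-most-linear growth encoded in $\nabla a\in L^1_T(C_b)$ — and, for smooth data, on having a compactly supported $L^2$ dominating function. One must also keep in mind that $\nabla a(t,\cdot)$ is only bounded and continuous, not uniformly continuous, so there is no modulus of continuity in $x$ to exploit; this is precisely why the argument is organised as ``pointwise in $x$ plus domination'' rather than through an explicit convergence rate.
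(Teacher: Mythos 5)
Your proof is correct, and it follows the same overall DiPerna--Lions strategy as the paper (kernel representation of the commutator, mean value theorem for the $n$-uniform $L^2\to L^2$ bound, then density), but the two key steps are executed differently. For the convergence on a dense class, the paper takes $\rho\in H^1$ and rewrites $r^j_n(\rho)\,=\,\d_ja\,S_n\rho-S_n(\d_ja\,\rho)+\big[a,S_n\big]\d_j\rho$, killing the last term through the extra factor $1/n$ that the undifferentiated commutator gains on $\d_j\rho\in L^2$; you instead take $\rho\in C^\infty_c$ and pass to the limit directly in the rescaled kernel by dominated convergence, identifying $\lim_n A_n(\rho)=-\,\d_ja^j\,\rho$ via the moment identity $\int\d_js(z)\,z_i\,dz=-\delta_{ij}$, so that the cancellation with $\d_ja^j\,S_n\rho$ is explicit. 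For the time variable, the paper regularises $a$ in time and runs a three-term $\veps$-splitting in $L^1_T(L^2)$, whereas you observe that the $n$-uniform bound $\|r^j_n(\rho)(t)\|_{L^2}\leq C\,\|\nabla a(t)\|_{L^\infty}\,\|\rho\|_{L^\infty_T(L^2)}$ provides a fixed $L^1_T$ dominating function and reduces everything to a statement at a.e.\ fixed $t$ by Lebesgue's theorem; this is cleaner and avoids the time-mollification of $a$ altogether. Both routes are sound; yours trades the $H^1$ algebra for an explicit moment computation and a slightly leaner handling of $t$.
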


Let us also recall the following standard notation. For $X$ a Banach space and $X^*$ its predual, we denote by $C_w\bigl([0,T];X\bigr)$ the set of measurable functions $f:[0,T]\ra X$
which are continuous with respect to the weak topology. Namely, for any $\phi\in X^*$, the function $t\,\mapsto\,\lan \phi,f(t)\ran_{X^*\times X}$ is continuous over $[0,T]$.

With this preparation, we are now ready to prove Proposition \ref{p:time-est}.
\begin{proof}[Proof of Proposition \ref{p:time-est}]
With the same notations as in Lemma \ref{l:commutator}, let us define $\rho_n\,:=\,S_n\rho$. Notice that $\big(\rho_n\big)_n\,\subset\,L^\infty_T(L^2)$. Moreover, $\rho_n$ satisfies the equation
\begin{equation} \label{eq:rho^n}
\d_t\rho_n\,+\,\div\big(a\,\rho_n\big)\,=\,g_n\,+\,r_n\,,\qquad\qquad\mbox{ with }\qquad \big(\rho_n\big)_{|t=0}\,=\,S_n\rho_0\,,
\end{equation}
where we have set $r_n\,:=\,\div\left(\big[a,S_n\big]\rho\right)$. Notice that one has $\left\|S_n\rho_0\right\|_{L^2}\,\leq\,C\,\left\|\rho_0\right\|_{L^2}$ and
$\left\|g_n\right\|_{L^1_T(L^2)}\,\leq\,C\,\|g\|_{L^1_T(L^2)}$.
Furthermore, in the limit $n\ra+\infty$, we have the strong convergence properties
$g_n\,\longrightarrow\,g$ in $L^1_T(L^2)$ and  $S_n\rho_0\,\longrightarrow\,\rho_0$ in $L^2$.
In addition, by Lemma \ref{l:commutator}, we know that $\left\|r_n\right\|_{L^1_T(L^2)}\,\leq\,C$ and $r_n\,\longrightarrow\,0$ in $L^1_T(L^2)$.

%Alfio: first of all removed ... 
Now, we remark that an easy inspection of \eqref{eq:rho^n} implies the property $\big(\d_t\rho_n\big)_n\,\subset\,L^1_T(H^{-1}_{\rm loc})$, which
in turn gives us the uniform embedding $\big(\rho_n\big)_n\,\subset\,C_T(H^{-1}_{\rm loc})$. From this latter property, combined with a density argument and the uniform
boundedness of $\big(\rho_n\big)_n$ in $L^\infty_T(L^2)$, we deduce that $\big(\rho_n\big)_n$ is uniformly bounded in $C_w\bigl([0,T];L^2(\R^d)\bigr)$.

Next, let us take the $L^2$ scalar product of equation \eqref{eq:rho^n} by $\rho_n$: by
standard computations we get
\begin{equation} \label{eq:L^2-norm}
\frac{1}{2}\,\frac{d}{dt}\left\|\rho_n\right\|^2_{L^2}\,+\,\frac{1}{2}\int\div a\,\left|\rho_n\right|^2\,dx\,=\,\int g_n\,\rho_n\,dx\,,
\end{equation}
which implies that, for all $n\in\N$, one has $\left\|\rho_n(t)\right\|_{L^2}\,\in\,C\big([0,T]\big)$. %, since $\big(\rho^n\big)_n\,\subset\,L^\infty_T(L^2)$.
Thanks to this property, together with the fact that $\rho_n\,\in\,C_w\bigl([0,T];L^2(\R^d)\bigr)$, after writing
$$
\left\|\rho_n(t+h)\,-\,\rho(t)\right\|_{L^2}^2\,=\,\left\|\rho_n(t+h)\right\|^2_{L^2}\,-\,2\,\lan\rho_n(t+h),\rho_n(t)\ran_{L^2\times L^2}\,+\,\left\|\rho_n(t)\right\|^2_{L^2}\,,
$$
%Alfio: immediately - spell
one immediately deduces that, for all $n\in\N$, $\rho_n$ belongs to $C_T(L^2)$.

On the other hand, by straightforward computations, from relation \eqref{eq:L^2-norm} we also infer the following inequality:
\begin{align}
\standardNorm{\rho_n(t)}_{L^2}\,&\leq\,C\,\exp\left(C\int_0^t \standardNorm{\div a(\tau)}_{L^\infty}\,d\tau \right)\,
\left(\standardNorm{S_n\rho_0}_{L^2}\,+\,\int_0^t \left(\standardNorm{g_n(\tau)}_{L^2}\,+\,\standardNorm{r_n(\tau)}_{L^2}\right)d\tau \right) \label{est:rho^n} \\
&\leq\,C\,\exp\left(C\int_0^t \standardNorm{\div a(\tau)}_{L^\infty}\,d\tau \right)\,
\left(\standardNorm{\rho_0}_{L^2}\,+\,\int_0^t\standardNorm{g(\tau)}_{L^2}\,d\tau \right) , \nonumber
\end{align}
for all $t \in [0,T]$, thanks also to the previous properties on $\big(S_n\rho_0\big)_n$, $\big(g_n\big)_n$ and $\big(r_n\big)_n$. In view of this energy estimate,
we deduce that $\big(\rho_n\big)_n$ is uniformly bounded in $C_T(L^2)$.

Next, we claim that $\left(\rho_n\right)_n$ is a Cauchy sequence in $C_T(L^2)$. For this, we take $m<n$ and consider the difference $\de_m^n\rho\,:=\,\rho_n-\rho_m$.
Then, $\de_m^n\rho$ fulfils
\begin{align*}
\d_t\de^n_m\rho\, +\, \div\big(a\,\de^n_m\rho\big)\,=\,\de^n_mg\,+\,\de^n_mr\,,\qquad\qquad \mbox{ with }\quad \de^n_m\rho_{|t=0}\,=\,\de^n_m\rho_0\,:=\,\rho^n_0\,-\,\rho_0^m\,,
\end{align*}
where we have defined also $\de^n_mg\,:=\,g_n-g_m$ and $\de^n_mr\,:=\,r_n-r_m$.
To this equation we can also apply the energy estimates, and obtain
\begin{align*}
\standardNorm{\de^n_m\rho}_{L^\infty_T(L^2)}\,\leq\,C\,\exp\left(C\,\standardNorm{\div a}_{L^1_T(L^\infty)} \right)\,
\left(\standardNorm{\de^n_m\rho_0}_{L^2}\,+\,\standardNorm{\de^n_mg}_{L^1_T(L^2)}\,+\,\standardNorm{\de^n_mr}_{L^1_T(L^2)}\right)\,.
\end{align*}
At this point, we can conclude thanks to the fact that $\big(S_n\rho_0\big)_n$, $\big(g_n\big)_n$ and $\big(r_n\big)_n$ are strongly convergent in the respective functional spaces,
and thus they are, in particular, Cauchy sequences. So, our claim is proved.

Further, we deduce that the limit $\rho$ of the sequence $\big(\rho_n\big)_n$ belongs to $C_T(L^2)$, and the convergence $\rho_n\longrightarrow\rho$ is strong in this space.
Finally, passing to the limit in the left-hand side of \eqref{est:rho^n} we discover that $\rho$ verifies the energy estimate \eqref{est:en-estUnb}.
\end{proof}

We conclude this part by remarking that stability, and then uniqueness, are easy consequences of Proposition \ref{p:time-est}.
\begin{prop} \label{p:L-stab}
Fix $T>0$ and $m\in\N$, and let $a$ be as in \eqref{hyp:data}. For $i=1,2$, take an initial datum $\rho_0^i\,\in\,H^m(\R^d)$ and an external force $g^i\,\in\,L^1\bigl([0,T];H^m(\R^d)\bigr)$,
and let $\rho^i\in L^\infty_T(H^m)$ be a corresponding solution to \eqref{eq:LiouvilleUnb} (whose existence is guaranteed by the previous arguments).

Then, after defining $\delta\rho_0\,:=\,\rho_0^1-\rho^2_0$, $\delta g\,:=\,g^1-g^2$ and $\delta\rho\,:=\,\rho^1-\rho^2$, the following estimate holds true for all $t\in[0,T]$, for some constant
$C$ independent of the data and the respective solutions:
$$
\left\|\delta\rho(t)\right\|_{H^m}\,\leq\,C\,\left(\standardNorm{\delta\rho_0}_{H^m}\, +\,\int_0^t \standardNorm{\delta g(\tau)}_{H^m}\, d\tau \right)\;
\exp\left(C\,\int_0^t \standardNorm{\nabla a(\tau)}_{C^{m}_b}\, d\tau \right)\,.
$$
\end{prop}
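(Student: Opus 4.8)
The plan is to exploit the linearity of the Liouville equation \eqref{eq:LiouvilleUnb} in the triple $(\rho,\rho_0,g)$. First I would observe that the difference $\delta\rho\,:=\,\rho^1-\rho^2$ is itself a weak solution, in the sense of \eqref{eq:LweakUnb}, of the Liouville problem driven by the \emph{same} vector field $a$, with initial datum $\delta\rho_0\,:=\,\rho_0^1-\rho_0^2$ and external force $\delta g\,:=\,g^1-g^2$. This is immediate: since each $\rho^i$ satisfies \eqref{eq:LweakUnb} with data $(\rho_0^i,g^i)$, subtracting the two identities tested against an arbitrary $\phi\in C_c^\infty\bigl(\R^d\times[0,T[\,\bigr)$ yields exactly \eqref{eq:LweakUnb} for $(\delta\rho,\delta\rho_0,\delta g)$. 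This is the only place where linearity — both of the PDE and of the notion of weak solution — really enters.

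Next I would verify that the new data fit the framework of Proposition \ref{p:time-est}: one has $\delta\rho_0\in H^m(\R^d)$ and $\delta g\in L^1\bigl([0,T];H^m(\R^d)\bigr)$ as differences of elements of those spaces, the drift $a$ is unchanged and hence still satisfies hypotheses \eqref{hyp:data} (in particular the at-most-linear growth recalled in Remark \ref{r:unb-drift}), and $\delta\rho\in L^\infty_T(H^m)$ by the triangle inequality. Therefore Proposition \ref{p:time-est} applies verbatim to $\delta\rho$: it gives simultaneously that $\delta\rho\in C\bigl([0,T];H^m(\R^d)\bigr)$ and that $\delta\rho$ satisfies the energy estimate \eqref{est:en-estUnb}, which, written out for $\delta\rho$ with data $\delta\rho_0$ and $\delta g$, is precisely the claimed inequality.

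There is essentially no technical obstacle here; the statement is a direct corollary of Proposition \ref{p:time-est}, and the only step needing a (one-line) justification is the one above, namely that the difference of two weak solutions is again a weak solution of the same equation with the differences of the data. As a byproduct, uniqueness of solutions to \eqref{eq:LiouvilleUnb} follows at once: taking $\rho_0^1=\rho_0^2$ and $g^1=g^2$ makes the right-hand side of the stability estimate vanish, so $\standardNorm{\delta\rho(t)}_{H^m}=0$ for every $t\in[0,T]$, i.e. $\rho^1\equiv\rho^2$.
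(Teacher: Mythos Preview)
Your proposal is correct and follows essentially the same approach as the paper: observe that by linearity $\delta\rho$ is a weak solution of \eqref{eq:LiouvilleUnb} with data $(\delta\rho_0,\delta g)$ and the same drift $a$, then apply Proposition \ref{p:time-est} directly. Your version is in fact a bit more explicit in checking the hypotheses and in drawing the uniqueness consequence, but the argument is identical.
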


\begin{proof}
It is enough to remark that, by taking the difference of the equations satisfied by $\rho^1$ and $\rho^2$, one deduces that $\delta\rho\,\in\,L^\infty_T(L^2)$ is a weak solution to
the following equation:
\begin{align*}
%\label{eq:GronwallUniqueness}
\begin{cases}
\partial_t\delta\rho\, +\, \dive \bigl(a\,\delta\rho\bigr)\,=\,\delta g \\
\delta\rho_{|t=0}\, =\, \delta\rho_0\,.
\end{cases}.
\end{align*}

Then, Proposition \eqref{p:time-est} applies, and gives us the claimed estimates.
\end{proof} 

\subsubsection{The case of the transport equation} \label{sss:transport}
The characterization of ensemble controls with the optimality conditions given in Section \ref{sec-OptimalitySystem}, requires the solution of an adjoint Liouville problem,
which is given by a linear transport problem. In preparation of that discussion, and to complete the analysis of the present section, we consider the following transport problem
\begin{align}\label{eq:transportProblem} 
\begin{cases}
\partial_tq\, +\, a\cdot\nabla q\, +\, b\,q\, =\, g &\qquad \text{ in }\; [0,T]\times\R^d\\
q_{|t=0}\, =\, q_0 &\qquad \text{ on } \RR^d\,.
\end{cases}
\end{align}
We assume that the data $q_0$, $a$ and $g$ verify the assumptions in \eqref{hyp:data}, where $\rho_0$ is replaced by $q_0$.
Moreover, we assume that $b$ has the same regularity as $\dive a$: that is, $b\,\in\,L^1\bigl([0,T];C^m_b(\R^d)\bigr)$.

We point out that the weak formulation of \eqref{eq:transportProblem} now reads as follows: for all $\phi\, \in\, C_c^\infty\bigl(\RR^d \times [0, T[\,\bigr)$, one has the equality
\begin{align} 
-\int_0^T\!\!\!\int_{\RR^d}\rho\,\partial_t \phi\,-\int_0^T\!\!\! \int_{\RR^d}\rho\, a\cdot\nabla\phi\,-\int^T_0\!\!\!\int_{\R^d}\rho\,\div a\,\phi\,+\int^T_0\!\!\!\int_{\R^d}\rho\,b\,\phi\,=\,
\int_0^T\!\!\!\int_{\RR^d}g\,\phi\, + \int_{\RR^d} \rho_0\,\phi(0)\,.
\label{eq:Tr-weak}
\end{align}
 
For \eqref{eq:transportProblem}, we have the following well-posedness result, analogous to Theorem \ref{thm:existenceUnboudedA} for the Liouville equation.
\begin{theorem} \label{thm:ex-u_Tr}
Let us fix $T>0$ and $m\in\N$, and let the data $a$, $b$, $q_0$ and $g$ satisfy the assumptions stated above.

Then there exists a unique solution $q\,\in\,C\big([0,T];H^m(\R^d)\big)$ to equation \eqref{eq:transportProblem}.
Moreover, there exists a ``universal'' constant $C>0$, independent of $q_0$, $a$, $b$, $g$, $q$ and $T$, such that the following estimate holds true for any $t\in[0,T]$:
\begin{align} \label{est:en-est_Tr}
\standardNorm{q(t)}_{H^m}\, \leq \,C\,\left(\standardNorm{q_0}_{H^m}\, +\, \int_0^t \standardNorm{g(\tau)}_{H^m}\, d\tau \right)\;
\exp\left(C\,\int_0^t\left(\standardNorm{\nabla a(\tau)}_{C^{m}_b}\,+\,\standardNorm{b(\tau)}_{C^{m}_b}\right)\,d\tau \right)\,.
\end{align}
\end{theorem}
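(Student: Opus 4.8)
The plan is to reduce Theorem~\ref{thm:ex-u_Tr} to the already-established theory for the Liouville equation (Theorem~\ref{thm:existenceUnboudedA}), since the transport and continuity equations differ only by the zero-order term $b\,q$ versus the divergence term $(\dive a)\,\rho$. First I would observe that a weak solution $q$ of \eqref{eq:transportProblem} in the sense of \eqref{eq:Tr-weak} can be rewritten as a weak solution of a Liouville-type equation by moving the transport form into divergence form: formally, $a\cdot\nabla q = \dive(a\,q) - (\dive a)\,q$, so \eqref{eq:transportProblem} becomes
\begin{equation*}
\d_t q\, +\, \dive\bigl(a\,q\bigr)\, =\, g\, +\, \bigl(\dive a\, -\, b\bigr)\,q\,=:\,\widetilde g\,,\qquad q_{|t=0}=q_0\,.
\end{equation*}
Under the hypotheses, $\dive a\in L^1_T(C^m_b)$ and $b\in L^1_T(C^m_b)$, so the multiplier $c:=\dive a - b$ also lies in $L^1_T(C^m_b)$. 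The difficulty is that $\widetilde g$ depends on the unknown $q$ itself, so this is not a direct application of Theorem~\ref{thm:existenceUnboudedA}; rather, it sets up a fixed-point/continuation argument.

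The main step is therefore a fixed-point argument on a short time interval, followed by a continuation argument to reach $[0,T]$. Concretely, I would define the map $\Phi:q\mapsto Q$, where $Q$ solves the Liouville equation $\d_t Q + \dive(a\,Q) = g + c\,q$ with datum $q_0$; by Theorem~\ref{thm:existenceUnboudedA} this is well-defined from $C\bigl([0,T'];H^m\bigr)$ to itself, and the energy estimate \eqref{est:en-estUnb} gives, using $\|c\,q\|_{H^m}\leq C\,\|c\|_{C^m_b}\|q\|_{H^m}$ (product rule in $H^m$ against $C^m_b$),
\begin{equation*}
\standardNorm{Q(t)}_{H^m}\leq C\Bigl(\standardNorm{q_0}_{H^m} + \int_0^t\standardNorm{g(\tau)}_{H^m}d\tau + \int_0^t\standardNorm{c(\tau)}_{C^m_b}\standardNorm{q(\tau)}_{H^m}d\tau\Bigr)\exp\Bigl(C\int_0^t\standardNorm{\nabla a(\tau)}_{C^m_b}d\tau\Bigr)\,.
\end{equation*}
Applying the same estimate to the difference of two iterates (which solves the Liouville equation with zero datum, zero forcing, and source $c\,(q_1-q_2)$) shows $\Phi$ is a contraction on $C\bigl([0,T'];H^m\bigr)$ once $\int_0^{T'}\|c(\tau)\|_{C^m_b}d\tau$ is small enough; here I would absorb the exponential factor, which is bounded on $[0,T]$, into the constant. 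This yields a unique local solution. Since the smallness condition involves only $\|c\|_{L^1(C^m_b)}$ on the subinterval — a quantity that is uniformly controlled and whose integral over $[0,T]$ is finite — the length of the time step can be taken uniform (or absolutely continuous in the endpoint), so finitely many steps cover $[0,T]$, giving global existence and uniqueness in $C\bigl([0,T];H^m\bigr)$.

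For the energy estimate \eqref{est:en-est_Tr}, rather than iterating the local bound (which would produce a constant depending on the number of steps), I would argue directly on the mollified equation as in the proof of Proposition~\ref{p:time-est}: set $q_n := S_n q$, derive the commutator equation $\d_t q_n + a\cdot\nabla q_n + b\,q_n = g_n + \widetilde r_n$ with $\widetilde r_n\to 0$ in $L^1_T(L^2)$ by Lemma~\ref{l:commutator} (for $m=0$; higher $m$ by differentiating), take the $L^2$ scalar product with $q_n$ to get
\begin{equation*}
\frac{1}{2}\frac{d}{dt}\standardNorm{q_n}_{L^2}^2 = \frac{1}{2}\int(\dive a)\,|q_n|^2\,dx - \int b\,|q_n|^2\,dx + \int (g_n+\widetilde r_n)\,q_n\,dx\,,
\end{equation*}
and apply Grönwall's lemma, which produces exactly the exponential weight $\exp\bigl(C\int_0^t(\|\nabla a\|_{C^0_b}+\|b\|_{C^0_b})d\tau\bigr)$ after using $\|\dive a\|_{L^\infty}\leq C\|\nabla a\|_{C^0_b}$; the $H^m$ case follows by commuting $D^\alpha$ through the equation, controlling the commutators $[D^\alpha,a\cdot\nabla]$ and $[D^\alpha,b]$ by $\|\nabla a\|_{C^m_b}$ and $\|b\|_{C^m_b}$ respectively, exactly as in the classical estimate behind Theorem~\ref{thm:ex-u_L}. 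The time-continuity $q\in C\bigl([0,T];H^m\bigr)$ then follows as in Proposition~\ref{p:time-est}, from continuity of $t\mapsto\|q_n(t)\|_{H^m}$ together with weak continuity. The main obstacle I anticipate is purely bookkeeping: ensuring the constant in \eqref{est:en-est_Tr} stays ``universal'' (independent of $T$ and of the continuation procedure), which is why the direct Grönwall argument on the mollified equation is preferable to patching local solutions together.
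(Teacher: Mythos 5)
Your proposal is correct, but for the existence and uniqueness part it takes a genuinely different route from the paper. The paper proves Theorem \ref{thm:ex-u_Tr} by rerunning the scheme of Theorem \ref{thm:existenceUnboudedA} verbatim on the transport equation: truncate the drift to $a_M=\chi(\cdot/M)\,a$, solve the truncated problems, obtain uniform $H^m$ bounds from the same energy estimates, and pass to the weak-$*$ limit in the weak formulation \eqref{eq:Tr-weak}; the only point it singles out is the convergence of the zero-order products $q^n\,\div a^n\,\phi$ and $q^n\,b^n\,\phi$, handled by strong $L^1_T\bigl(L^\infty(K)\bigr)$ convergence of the coefficients on the compact support of the test function against weak-$*$ convergence of $q^n$. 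You instead absorb the zero-order terms into the source, rewrite the transport equation as the Liouville equation $\d_tq+\div(a\,q)=g+(\div a-b)\,q$ (and the two weak formulations are indeed equivalent for $q\in L^\infty_T(L^2)$, since all the extra terms are integrable), and run a Banach fixed point on $C\bigl([0,T'];H^m\bigr)$, continued over finitely many subintervals by absolute continuity of $t\mapsto\|\div a-b\|_{C^m_b}$. This buys you a clean reduction to Theorem \ref{thm:existenceUnboudedA} used as a black box — no need to revisit the weak-$*$ limit of products — at the price of the continuation bookkeeping and of not directly producing the stated constant; you correctly recognize that iterating the local bound would spoil the ``universal'' constant, and you repair this by reverting to the paper's own mollification-plus-Gr\"onwall argument (as in Proposition \ref{p:time-est}) to obtain \eqref{est:en-est_Tr} and the time continuity, which is exactly what the paper does. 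The only minor caveat is that the commutator needed for the transport form, $a\cdot\nabla S_nq-S_n(a\cdot\nabla q)$ together with $[b,S_n]q$, is not literally the divergence-form commutator of Lemma \ref{l:commutator}; it reduces to it modulo terms of the form $\div a\,S_nq-S_n(\div a\,q)$ and $b\,S_nq-S_n(b\,q)$, which converge to zero trivially in $L^1_T(L^2)$ — a point the paper glosses over in the same way by saying the regularization ``works in exactly the same way''.
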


The proof is analogous to the one given for Theorem \ref{thm:ex-u_L}, so it is omitted here. In particular, the regularization procedure and the energy estimates work in exactly the same way.
The only point which deserves some attention is passing to the limit in the weak formulation \eqref{eq:Tr-weak} at step $n$ of the regularization procedure, especially in  the terms
$$
-\,\int^T_0\int_{\R^d}q^n\,\div a^n\,\phi\,dx\,dt\,+\,\int^T_0\int_{\R^d}q^n\,b^n\,\phi\,dx\,dt\,.
$$
Let us focus on the latter term only; the former can be treated by the same method. Of course, it is enough to treat the case when $m=0$.

%Alfio: first of all removed 
Now, we notice that the integral is in fact performed on the compact set $K\,:=\,{\rm supp}\,\phi$.
Therefore, by Proposition 4.21 and Theorem 4.22 of \cite{Brezis}, we deduce that $b^n\,\longrightarrow\,b$ in $L^1_T\bigl(L^\infty(K)\bigr)$ for $n\ra+\infty$.
On the other hand, thanks to uniform bounds, $q^n\,\stackrel{*}{\rightharpoonup}\,q$ in $L^\infty_T(L^2)$, for some $q$ belonging to that space; so, in particular the weak-$*$ convergence
holds true in $L^\infty_T\bigl(L^2(K)\bigr)$.
Putting these properties together, we deduce that $\bigl(q^n\,b^n\bigr)_n$ is uniformly bounded in $L^1_T\bigl(L^2(K)\bigr)$ and it weakly-$*$ converges to $q\,b$ in that space.

The previous argument shows that we can pass to the limit in the weak formulation of the approximated problems and gather that the limit point $q$ of the sequence $\bigl(q^n\bigr)_n$
solves \eqref{eq:Tr-weak}.

\subsection{Well-posedness theory in weighted spaces} \label{ss:weight}

In this section, we extend the previous theory to Sobolev spaces with weights.
This analysis is especially important for the investigation of the Liouville 
control-to-state map and of the Liouville ensemble optimal control problem, 
see the next sections.

\begin{remark} \label{r:tr-weight}
We limit ourselves to treat the case of the Liouville equation. However, 
the statements that follow can be proved
also for the transport problem, with slight modifications in the proofs.
\end{remark}

\subsubsection{Definition of weighted spaces} \label{sss:weight-sp}
For the analysis of the Liouville control-to-state map in Section \ref{s:control-map}, we need to prove weighted integrability of $\rho$,
due to the growth of the drift function. For this purpose, we introduce the following definition.
\begin{definition} \label{def:H^m_k}
Fix $(m,k)\in\N^2$. We define the space $H^m_k(\RR^d)$ in the following way:
$$
H^0_k(\R^d)\,=\,L^2_k(\R^d)\,:=\,\left\{f\in L^2(\R^d)\;\bigl|\quad|x|^k\,f\;\in\;L^2(\RR^d)\right\}\,,
$$
and, for $m\geq1$, we set
$$
H^m_k(\R^d)\,:=\,\left\{f\in H^m(\R^d)\cap H^{m-1}_k(\R^d)\;\big|\quad |x|^k\,D^\alpha f\;\in\;L^2(\RR^d)\quad\forall\;|\alpha|=m\right\}\,.
$$

The space $H^m_k$ is endowed with the following norm:
\begin{align*}
\standardNorm{f}_{H^m_k} := \sum_{|\alpha| \leq m} \left\|\big(1\,+\,|x|^k\big)\,D^\alpha f\right\|_{L^2}\,.
\end{align*}
\end{definition}

Sometimes, given $m\in\N$, we will use the notation
$$
\left\|\nabla^mf\right\|_{L^2}\,=\,\sum_{|\alpha|=m}\left\|D^\alpha f\right\|_{L^2}\,
$$
and analogous writing for weighted norms.

Notice that, for all fixed $m$ and $k$ in $\N$, one has the embedding $H^m_k\,\subset\,H^m$. Of course, $H^m\,=\,H^m_0$ for all $m\geq0$.
Furthermore, since we want to avoid too singular behaviours
close to $0$, we will often focus on the special case (which will be enough for our scopes)
$$
m\,\leq\,k\,.
$$
Then, we have a simple characterization of the spaces $H^m_k$, which will be useful especially in Section \ref{s:control-map},
when studying the control-to-state map related to our optimal control problem. 
\begin{prop} \label{p:H^m_k}
\begin{enumerate}[(i)]
 \item Given $k\in\N$, one has $f\,\in\,L^2_k$ if and only if $(1+|x|^k)\,f\,\in\,L^2$.
\item For $k\in\N\setminus\{0\}$ and $1\leq m\leq k$, let $f\,\in\,H^m\cap H^{m-1}_k$. Then $f\,\in\,H^m_k$ if and only if $|x|^k\,f\,\in\,H^m$. \\
In particular, a tempered distribution $f$ belongs to $H^1_1$ if and only if both $f$ and $|x|\,f$ belong to $H^1$; it belongs to $H^2_2$ if and only if both $f$ and
$|x|^2\,f$ belong to $H^2$ and $\nabla f$ belongs to $L^2_2$.
%%% Analogously, suppose $k\geq1$. For any $1\leq m\leq k$, we have that a function $f\,\in\,H^m_k$ if and only if $(1+|x|^k)\,f\,\in\,H^m$.
\end{enumerate}
\end{prop}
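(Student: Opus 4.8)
The plan is to deduce both equivalences directly from Definition \ref{def:H^m_k}, reducing part (ii) to repeated applications of the Leibniz rule to the product $|x|^k\,f$, together with elementary bounds on the derivatives of the weight. Part (i) is immediate: since $1\leq 1+|x|^k$ and $|x|^k\leq 1+|x|^k$ pointwise, requiring $(1+|x|^k)\,f\in L^2$ is the same as requiring both $f\in L^2$ and $|x|^k\,f\in L^2$, which is exactly the definition of $L^2_k$.

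For part (ii) I would first record two elementary facts about the weight. Since $|x|^k$ is homogeneous of degree $k$ and smooth on $\R^d\setminus\{0\}$, for every multi-index $\gamma$ with $|\gamma|\leq k$ its derivative $D^\gamma(|x|^k)$ is homogeneous of degree $k-|\gamma|\geq0$; being smooth and hence bounded on the unit sphere, it satisfies the pointwise bound $\bigl|D^\gamma(|x|^k)\bigr|\leq C_\gamma\,(1+|x|)^{k-|\gamma|}$ (with $D^\gamma(|x|^k)$ extending by $0$ at the origin when $|\gamma|<k$, and merely bounded there when $|\gamma|=k$). In particular $|x|^k\in W^{k,\infty}_{\rm loc}(\R^d)\subseteq W^{m,\infty}_{\rm loc}(\R^d)$ because $m\leq k$, and this is exactly the regularity needed to justify, by mollification, the Leibniz identity
\[
D^\beta\bigl(|x|^k\,f\bigr)\,=\,\sum_{\gamma\leq\beta}\binom{\beta}{\gamma}\,D^\gamma\bigl(|x|^k\bigr)\,D^{\beta-\gamma}f
\qquad\text{for }|\beta|\leq m
\]
in the sense of distributions, for any $f\in H^m$. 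Second, since $0\leq k-|\gamma|\leq k$ one has the comparison $(1+|x|)^{k-|\gamma|}\leq C\,(1+|x|^k)$.

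With these facts in hand I would argue as follows. \emph{Necessity:} if $f\in H^m_k$, then $(1+|x|^k)\,D^\delta f\in L^2$ for all $|\delta|\leq m$, so in the Leibniz expansion above each summand obeys $\bigl|D^\gamma(|x|^k)\,D^{\beta-\gamma}f\bigr|\leq C\,(1+|x|^k)\,|D^{\beta-\gamma}f|\in L^2$ (as $|\beta-\gamma|\leq m$); summing over $\gamma\leq\beta$ gives $D^\beta(|x|^k f)\in L^2$ for all $|\beta|\leq m$, i.e. $|x|^k\,f\in H^m$. \emph{Sufficiency:} if $f\in H^m\cap H^{m-1}_k$ and $|x|^k\,f\in H^m$, fix $|\alpha|=m$ and isolate the $\gamma=0$ term in the Leibniz identity,
\[
|x|^k\,D^\alpha f\,=\,D^\alpha\bigl(|x|^k\,f\bigr)\,-\,\sum_{0<\gamma\leq\alpha}\binom{\alpha}{\gamma}\,D^\gamma\bigl(|x|^k\bigr)\,D^{\alpha-\gamma}f\,.
\]
The first term lies in $L^2$ by assumption; in the remaining sum every $\gamma$ has $|\gamma|\geq1$, hence $|\alpha-\gamma|\leq m-1$, so $\bigl|D^\gamma(|x|^k)\,D^{\alpha-\gamma}f\bigr|\leq C\,(1+|x|^k)\,|D^{\alpha-\gamma}f|\in L^2$ thanks to $f\in H^{m-1}_k$. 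Thus $|x|^k\,D^\alpha f\in L^2$ for all $|\alpha|=m$, which, combined with $f\in H^m\cap H^{m-1}_k$, is precisely the statement $f\in H^m_k$.

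The two ``in particular'' claims then follow by taking $(m,k)=(1,1)$ and $(m,k)=(2,2)$, after checking that the standing hypothesis $f\in H^m\cap H^{m-1}_k$ is automatic there: if $f,|x|\,f\in H^1$ then $f\in H^1$ and $|x|\,f\in L^2$, so $f\in H^1\cap L^2_1=H^1\cap H^0_1$; and if $f,|x|^2 f\in H^2$ and $\nabla f\in L^2_2$, then $f\in H^2$, $|x|^2 f\in L^2$ and $\nabla f\in L^2_2$, so $f\in H^2\cap H^1_2$ (recalling that $H^1_2=\{g\in H^1\cap L^2_2:\nabla g\in L^2_2\}$). I expect the only point requiring real care to be the bookkeeping in the Leibniz expansions — in the sufficiency part, making sure that the single summand carrying a top-order derivative $D^\alpha f$ with $|\alpha|=m$ is exactly the one moved to the left-hand side, while all other summands involve derivatives of $f$ of order $\leq m-1$ only, so that the weaker hypothesis $f\in H^{m-1}_k$ suffices — together with the routine verification that the constraint $m\leq k$ provides enough regularity of $|x|^k$ for the Leibniz rule and keeps the auxiliary weights in the range $[0,k]$.
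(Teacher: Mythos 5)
Your proposal is correct and follows essentially the same route as the paper: part (i) is dispatched by the pointwise comparison, and part (ii) rests on the Leibniz expansion of $D^\alpha(|x|^k f)$, isolating the top-order term $|x|^k D^\alpha f$ and absorbing the remaining summands via the bound $|D^\gamma(|x|^k)|\lesssim(1+|x|^k)$ (valid since $|\gamma|\leq m\leq k$) together with the hypothesis $f\in H^{m-1}_k$. The only cosmetic difference is that the paper outsources the lower-order step ($|x|^k f\in H^{m-1}$) to its Lemma \ref{l:H^m_k}, whereas you reprove it inline by the same computation.
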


Before proving this statement, let us state a preliminary result, whose proof is postponed to the Appendix.
\begin{lemma} \label{l:H^m_k}
Let $(m,k)\in\N^2$, with $m\leq k$.
If $f\,\in\,H^m_k$, then $(1+|x|^k)\,f\,\in\,H^{m}$.
\end{lemma}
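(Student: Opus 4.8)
The plan is to reduce the statement to the Leibniz rule together with a pointwise bound on the derivatives of the weight $w(x):=1+|x|^k$.

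First I would record what the hypothesis $f\in H_k^m$ provides. Unwinding the recursive Definition \ref{def:H^m_k}, one sees that $f\in H_k^j$ for every $0\le j\le m$; in particular $|x|^k\,D^\alpha f\in L^2(\R^d)$ for all multi-indices $\alpha$ with $|\alpha|\le m$, and since $f\in H^m$ as well, we get
$$
\bigl(1+|x|^k\bigr)\,D^\alpha f\,=\,D^\alpha f\,+\,|x|^k\,D^\alpha f\,\in\,L^2(\R^d)\qquad\text{for all }\ |\alpha|\le m .
$$
(For $m=0$ this is nothing but part (i) of Proposition \ref{p:H^m_k}, which serves as the base case.)

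Second, I would establish the pointwise estimate $\bigl|D^\beta(|x|^k)\bigr|\le C_{k,\beta}\,(1+|x|^k)$ for every multi-index $\beta$ with $|\beta|\le k$. Indeed, on $\R^d\setminus\{0\}$ the function $|x|^k$ is smooth and $D^\beta(|x|^k)$ is homogeneous of degree $k-|\beta|\ge 0$; since its restriction to the unit sphere is continuous, it is bounded there, whence $|D^\beta(|x|^k)|\le C\,|x|^{k-|\beta|}\le C\,(1+|x|^k)$ on all of $\R^d$. Moreover $|x|^k\in C^{k-1}(\R^d)$ with locally Lipschitz derivatives of order $k-1$, so its weak derivatives up to order $k$ are the a.e.\ classical ones and $|x|^k\in W^{k,\infty}_{\rm loc}(\R^d)$ (if $k$ is even, $|x|^k$ is a polynomial and all of this is trivial). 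Because $m\le k$, the Leibniz rule is then legitimate: for every $|\alpha|\le m$,
$$
D^\alpha\bigl((1+|x|^k)\,f\bigr)\,=\,\sum_{\beta\le\alpha}\binom{\alpha}{\beta}\,D^\beta\bigl(1+|x|^k\bigr)\,D^{\alpha-\beta}f
$$
as an identity of distributions, which one may justify by mollifying $f$, applying the classical product rule, and passing to the limit. By the bound just proved, each summand satisfies $\bigl|D^\beta(1+|x|^k)\,D^{\alpha-\beta}f\bigr|\le C\,(1+|x|^k)\,|D^{\alpha-\beta}f|$, which lies in $L^2(\R^d)$ by the first step since $|\alpha-\beta|\le m$. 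Summing over $\beta\le\alpha$ and then over $|\alpha|\le m$ shows $(1+|x|^k)\,f\in H^m(\R^d)$, together with the quantitative bound $\|(1+|x|^k)f\|_{H^m}\le C_{m,k}\,\|f\|_{H_k^m}$.

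The only delicate point is the lack of smoothness of $|x|^k$ at the origin when $k$ is odd; this is precisely the reason for imposing $m\le k$, which guarantees that the weight is never differentiated more than $k$ times, so that all derivatives of $|x|^k$ occurring above are bona fide $L^\infty_{\rm loc}$ functions with the stated growth at infinity and the Leibniz rule applies. Everything else is a routine application of the product rule and of Definition \ref{def:H^m_k}.
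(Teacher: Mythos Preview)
Your proof is correct and follows essentially the same route as the paper: both unwind the recursive definition to get $(1+|x|^k)\,D^\gamma f\in L^2$ for all $|\gamma|\le m$, apply the Leibniz rule to $D^\alpha\bigl(|x|^k f\bigr)$, and control the cross terms via $|D^\beta|x|^k|\le C\,|x|^{k-|\beta|}$, using $m\le k$ to ensure $k-|\beta|\ge 0$. Your use of the cruder pointwise bound $|D^\beta|x|^k|\le C(1+|x|^k)$ is a minor streamlining compared to the paper, which interpolates to get $|x|^j D^\gamma f\in L^2$ for intermediate $0\le j\le k$, but the argument is otherwise the same.
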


Thanks to Lemma \ref{l:H^m_k}, we can prove Proposition \ref{p:H^m_k}.
\begin{proof}[Proof of Proposition \ref{p:H^m_k}]
Assertion (i) is elementary. So, let us focus on the proof of (ii).

Suppose that $f\in H^m\cap H^{m-1}_k$. Then, by Lemma \ref{l:H^m_k} above, we have that $|x|^k\,f\,\in\,H^{m-1}_k$.
At this point, for $|\alpha|=m$, we write, using again Leibniz rule,
$$
D^\alpha\left(|x|^k\,f\right)\,=\,|x|^k\,D^\alpha f\,+\,\sum_\beta D^\beta|x|^k\,D^{\alpha-\beta}f\,,
$$
where the sum is performed for all $\beta\leq\alpha$ such that $|\beta|\geq1$. By the previous arguments, and the fact that $m\leq k$, we have that all the terms in the sum belong to $L^2$.
Then, the term on the left-hand side belongs to $L^2$ if and only if the first term on the right-hand side does.

The last sentences follow by straightforward computations. First of all, we have that
$$
\d_j\big(|x|\,f\big)\,=\,\d_j|x|\,f\,+\,|x|\,\d_jf\,,
$$
for all $1\leq j\leq d$. Furthermore, we also have
$$
\nabla^2\big(|x|^2\,f\big)\,\sim\,\nabla \big(|x|\,f\,+\,|x|^2\,\nabla f\big)\,\sim\,\nabla|x|\,f\,+\,\big(|x|+|x|^2\big)\,\nabla f\,+\,|x|^2\,\nabla^2f\,.
$$
The equivalence between the two assertions is then apparent. Indeed, arguing as in the beginning of the proof to Lemma \ref{l:H^m_k}, we gather that, if $f\,\in\,H^2_2$, then
$|x|^j\,D^\alpha f\,\in\,L^2$ for all $0\leq j\leq 2$ and $|\alpha|=0, 1$. Hence, all the terms in the right-hand side belong to $L^2$, and then so does the one on the left-hand side.
On the contrary, if both $f$ and $|x|^2\,f$ belong to $H^2$ and $\nabla f$ belongs to $L^2_2$, then $f\,\in\,H^2\cap H^1_2$; finally, by the previous equality, we also discover
that $|x|^2\,\nabla^2f$ belongs to $L^2$, completing the proof of the reverse implication.

The proof of the proposition is hence completed. 
\end{proof}

\subsubsection{The Liouville equation in weighted spaces} \label{sss:Liouville-w}
After the above preliminaries, we are ready to state the main result of this section, which show well-posedness of the Liouville equation in $H^m_k$ spaces.
\begin{theorem} \label{th:weight}
Let $T>0$ and $(m,k)\in\N^2$ fixed, and let $a$ be a vector field satisfying hypotheses \eqref{hyp:data}. Moreover, assume that $\rho_0\in H^m_k(\R^d)$ and $g\in L^1\big([0,T];H^m_k(\R^d)\big)$.

Then there exists a unique solution $\rho\,\in\,C\bigl([0,T];H^{m}_k(\RR^d)\bigr)$ to problem \eqref{eq:LiouvilleUnb}.
Moreover, there exists a ``universal'' constant $C>0$, independent of $\rho_0$, $a$, $g$, $\rho$ and $T$, such that the following estimate holds true for any $t\in[0,T]$:
\begin{align} \label{est:weight}
\standardNorm{\rho(t)}_{H^m_k}\, \leq\,C\,\exp\left(C\,\int_0^t\left\|\nabla a(\tau)\right\|_{C^m_b}\,d\tau \right)\,
\left(\standardNorm{\rho_0}_{H^m_k}\, +\, \int_0^t \standardNorm{g(\tau)}_{H^m_k}\, d\tau \right)\,.
\end{align}
\end{theorem}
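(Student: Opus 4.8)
The plan is to follow the scheme of the proof of Theorem~\ref{thm:existenceUnboudedA}, adapting every step to the weighted norm $\standardNorm{\cdot}_{H^m_k}$: truncate the drift into $a_M:=\chi(\cdot/M)\,a$ as in \eqref{eq:aM}, solve the approximate problems \eqref{eq:LM} by the classical theory (Theorem~\ref{thm:ex-u_L}), derive \emph{uniform-in-$M$} a priori bounds in $H^m_k$, extract a weak-$*$ limit, and finally recover time continuity and uniqueness by the regularization argument of Proposition~\ref{p:time-est}. As in the unweighted setting, the core of the matter is the a priori estimate \eqref{est:weight}; once this is available with a constant depending on the drift only through $\standardNorm{\nabla a}_{L^1_T(C^m_b)}$ and through the linear-growth constant $c=\standardNorm{\nabla a}_{L^\infty}$ of Remark~\ref{r:unb-drift}, the remaining steps are essentially unchanged — in particular the passage to the limit in the weak formulation \eqref{eq:LweakUnb} is carried out exactly as in Theorem~\ref{thm:existenceUnboudedA}, through Lemma~\ref{lem:LocConvergenceAM}, and \eqref{est:weight} is transferred to the limit by weak lower semicontinuity. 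For $a_M$ both quantities above are bounded uniformly in $M$, by \eqref{ub:Da_M} and by $|a_M|\le|a|$ respectively.

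To prove the a priori estimate, fix a multi-index $\alpha$ with $|\alpha|\le m$ and set $\rho_\alpha:=D^\alpha\rho$. Differentiating the equation, $\rho_\alpha$ solves a Liouville equation with the same drift $a$ and right-hand side $F_\alpha:=D^\alpha g+\big[D^\alpha,\div(a\,\cdot)\big]\rho$. Expanding the commutator by the Leibniz rule, it is a finite sum of terms $D^\beta a\cdot D^\gamma(\nabla\rho)$ and $D^\beta(\div a)\,D^\gamma\rho$ with $|\beta|\ge1$ and $|\gamma|\le m-1$; since every $D^\beta a$ with $|\beta|\ge1$ is bounded by $\standardNorm{\nabla a}_{C^m_b}$, this yields
$$
\big\|(1+|x|^k)\,F_\alpha\big\|_{L^2}\,\le\,\big\|(1+|x|^k)\,D^\alpha g\big\|_{L^2}\,+\,C\,\standardNorm{\nabla a}_{C^m_b}\,\standardNorm{\rho}_{H^m_k}\,.
$$
One then runs a weighted energy estimate: multiplying the equation for $\rho_\alpha$ by $(1+|x|^k)^2\,\rho_\alpha$ and integrating by parts, the transport term produces, besides the harmless contribution $\tfrac12\int(1+|x|^k)^2(\div a)\,\rho_\alpha^2$, the ``dangerous'' term $-\int(1+|x|^k)\big(a\cdot\nabla(1+|x|^k)\big)\rho_\alpha^2$. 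This is exactly where the at most linear growth of $a$ enters: since $|\nabla(1+|x|^k)|\le C\,(1+|x|^k)$ and $|a(t,x)|\le C\,c(t)\,(1+|x|)$, we have $\big|(1+|x|^k)\,a\cdot\nabla(1+|x|^k)\big|\le C\,c(t)\,(1+|x|^k)^2$, so that this term is absorbed into $C\,\standardNorm{\nabla a}_{C^0_b}\,\big\|(1+|x|^k)\rho_\alpha\big\|_{L^2}^2$. Putting the bounds together, one gets
$$
\tfrac{d}{dt}\big\|(1+|x|^k)\rho_\alpha\big\|_{L^2}^2\,\le\,C\,\standardNorm{\nabla a}_{C^m_b}\,\big\|(1+|x|^k)\rho_\alpha\big\|_{L^2}^2\,+\,\big\|(1+|x|^k)F_\alpha\big\|_{L^2}\,\big\|(1+|x|^k)\rho_\alpha\big\|_{L^2}\,;
$$
summing over $|\alpha|\le m$, which reconstructs $\standardNorm{\rho}_{H^m_k}$ by Definition~\ref{def:H^m_k}, and invoking Gr\"onwall's lemma gives \eqref{est:weight}. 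These computations are formal; they are made rigorous just as in the proof of Proposition~\ref{p:time-est}, by first applying the mollifiers $S_n$ (and a spatial cut-off), for which one needs a \emph{weighted} version of the commutator Lemma~\ref{l:commutator}, namely $(1+|x|^k)\,\d_j\big([a,S_n]\rho\big)\to0$ in $L^1_T(L^2)$. Its proof is the classical one, once one checks that commuting the weight through the convolution only creates lower-order terms, again tamed by the linear growth of $a$.

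With \eqref{est:weight} at hand, existence follows as in Theorem~\ref{thm:existenceUnboudedA}. For each fixed $M$, \eqref{eq:LM} has a solution $\rho_M\in C_T(H^m)$; since $a_M$ vanishes outside a ball, there $\rho_M(t,\cdot)=\rho_0+\int_0^tg(\tau)\,d\tau$, which together with $\rho_M\in C_T(H^m)$ gives $\rho_M\in C_T(H^m_k)$ (with an $M$-dependent bound). The a priori estimate then bounds $(\rho_M)_M$ uniformly in $L^\infty_T(H^m_k)$, so up to a subsequence $\rho_M\stackrel{*}{\rightharpoonup}\rho$ in that space; the limit solves \eqref{eq:LiouvilleUnb} and satisfies \eqref{est:weight}. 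Time continuity ($\rho\in C_T(H^m_k)$) and uniqueness then follow verbatim as in Proposition~\ref{p:time-est}, working throughout in the weighted norm and using the weighted commutator lemma above; uniqueness, in particular, comes from the weighted stability estimate obtained by applying \eqref{est:weight} to the difference of two solutions, in the spirit of Proposition~\ref{p:L-stab}.

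I expect the main obstacle — and the only genuinely new point compared with the unweighted theory — to be the interplay between the weight $(1+|x|^k)$ and the growth of the drift: one must verify that carrying the weight across the transport operator, both in the energy estimates and in the weighted commutator lemma, creates only terms that the weight itself can absorb, which works precisely because $a$ grows at most linearly, so that $a\cdot\nabla(1+|x|^k)$ grows no faster than $(1+|x|^k)$. Everything else is routine; the only mildly delicate technical point is the behaviour of the derivatives of $|x|^k$ near the origin, which is harmless here since the weight is differentiated only once in all the estimates above (and one could anyway replace $|x|^k$ by the smooth weight $(1+|x|^2)^{k/2}$ with no essential change).
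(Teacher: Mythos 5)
Your proposal is correct and follows essentially the same route as the paper: the paper likewise reduces everything to the weighted energy estimate, controlling the commutator between the weight and the transport operator through the linear growth of $a$ (it organizes the computation by writing the equation for $|x|^k\rho$, applying the unweighted estimate, and inducting on $m$ via Lemmas \ref{l:weight-m=0} and \ref{l:weight-m=1} — the same integration by parts you perform by testing against $(1+|x|^k)^2\,\rho_\alpha$ — while it simply omits the regularization that you carry out via the truncated drifts). The one imprecision is your displayed deduction for the dangerous term: combining $|\nabla(1+|x|^k)|\le C\,(1+|x|^k)$ with $|a|\le C\,c(t)\,(1+|x|)$ leaves an uncancelled factor $(1+|x|)$; you need the sharper bound $|\nabla(1+|x|^k)|\le C\,|x|^{k-1}$ together with $(1+|x|)\,|x|^{k-1}\le C\,(1+|x|^k)$ — which is exactly the mechanism you state correctly in your closing paragraph, so this is a slip in the write-up rather than a gap.
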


Before proving this statement in its full generality, let us consider its version for simpler cases, which will be needed in the proof of the general case. 
Moreover, their precise form is important, in view of their application in Section \ref{s:control-map}.

We start with the case $m=0$.

\begin{lemma} \label{l:weight-m=0}
Assume that the hypotheses of Theorem \ref{th:weight} hold true with $m=0$.

Then there exists a unique solution $\rho\,\in\,C\bigl([0,T];L^2_k(\RR^d)\bigr)$ to problem \eqref{eq:LiouvilleUnb}. Moreover, there  exists a ``universal'' constant $C>0$
such that the following estimate holds true for any $t\in[0,T]$:
\begin{align*} %\label{est:weight}
\standardNorm{\rho(t)}_{L^2_k}\, \leq\,C\,\exp\left(C\,\int_0^t\left\|\nabla a(\tau)\right\|_{L^\infty}\,d\tau \right)\,
\left(\standardNorm{\rho_0}_{L^2_k}\, +\, \int_0^t \standardNorm{g(\tau)}_{L^2_k}\, d\tau \right)\,.
\end{align*}
\end{lemma}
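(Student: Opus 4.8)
The plan is to mimic the energy method used in Proposition \ref{p:time-est}, but carrying the weight $(1+|x|^k)^2$ inside the $L^2$ scalar product. The natural guiding computation: for a smooth enough solution $\rho$ to $\d_t\rho+\div(a\,\rho)=g$, multiply the equation by $(1+|x|^k)^2\,\rho$ and integrate over $\R^d$. Writing $w(x):=1+|x|^k$ and using $\div(a\,\rho)=a\cdot\nabla\rho+\rho\,\div a$, I would integrate by parts the transport term to produce $\frac12\frac{d}{dt}\|w\,\rho\|_{L^2}^2+\frac12\int(\div a)\,w^2\,|\rho|^2\,dx-\frac12\int(a\cdot\nabla w^2)\,|\rho|^2\,dx=\int g\,w^2\,\rho\,dx$. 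The first extra term is harmless since $\|\div a\|_{L^\infty}\leq\|\nabla a\|_{L^\infty}$. The crucial new term is $\int(a\cdot\nabla w^2)\,|\rho|^2$: here $|\nabla w^2|=|2w\,\nabla w|\leq C\,k\,(1+|x|^k)(1+|x|^{k-1})\leq C\,w^2$, because $(1+|x|^{k-1})\leq(1+|x|^k)$ for $|x|$ of any size (split $|x|\le1$ and $|x|\ge1$). Combined with the at-most-linear growth bound $|a(t,x)|\le C\,c(t)(1+|x|)$ from Remark \ref{r:unb-drift}, naively one gets $|a\cdot\nabla w^2|\le C\,c(t)(1+|x|)\,w^2$, which is one power of $|x|$ too many. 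The fix is to keep the gradient structure of $w$: $|x|\,|\nabla w|=|x|\cdot k|x|^{k-1}=k|x|^k\le k\,w$, hence $(1+|x|)\,|\nabla w|\le|\nabla w|+|x|\,|\nabla w|\le C\,w$ and therefore $|a\cdot\nabla w^2|=2|a\cdot\nabla w|\,w\le C\,c(t)\,w^2$. This is exactly the estimate needed; it yields $\frac{d}{dt}\|w\,\rho\|_{L^2}\le C\,\|\nabla a(t)\|_{L^\infty}\,\|w\,\rho\|_{L^2}+\|w\,g(t)\|_{L^2}$, and Grönwall gives the claimed bound since $\|w\,\rho\|_{L^2}\sim\|\rho\|_{L^2_k}$ by Proposition \ref{p:H^m_k}(i).

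To make this rigorous rather than formal, I would not apply the above computation to $\rho$ directly (which is only known to be in $L^\infty_T(L^2)$, where these weighted integrals need not converge and where $\rho$ is not smooth), but rather regularise as in the proof of Proposition \ref{p:time-est}: set $\rho_n:=S_n\rho$, which satisfies $\d_t\rho_n+\div(a\,\rho_n)=g_n+r_n$ with $r_n=\div([a,S_n]\rho)$ and $r_n\to0$ in $L^1_T(L^2)$ by Lemma \ref{l:commutator}. However, there is a subtlety: Lemma \ref{l:commutator} controls the commutator only in $L^2$, not in $L^2_k$, so I would need a weighted variant. The cleanest route is to work instead with the truncated problems \eqref{eq:LM}: for each $M$, $a_M$ is bounded with bounded derivatives, so by the classical theory $\rho_M\in C_T(H^m)$ and, moreover, if $\rho_0\in H^m_k$ one checks (again by a mollification/energy argument, now legitimate because $a_M$ has compact support in $x$, so all weighted integrals are finite and finite-propagation-type bounds hold) that $\rho_M\in C_T(H^m_k)$. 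Then the key point is that the weighted energy estimate for $\rho_M$ is uniform in $M$: redoing the computation above with $a_M$ in place of $a$, the term $\int(a_M\cdot\nabla w^2)\,|\rho_M|^2$ is bounded by $C\,c(t)\int w^2|\rho_M|^2$ using $|a_M(t,x)|\le C\,c(t)(1+|x|)$ and $(1+|x|)|\nabla w|\le C\,w$ exactly as before, with $\|\nabla a_M\|_{L^\infty}\le C$ uniformly by \eqref{ub:Da_M} — so the constants do not blow up as $M\to+\infty$. This gives $(\rho_M)_M\subset L^\infty_T(L^2_k)$, hence a weak-$*$ limit point $\tilde\rho\in L^\infty_T(L^2_k)$; by uniqueness in $L^\infty_T(L^2)$ from Proposition \ref{p:L-stab}, $\tilde\rho$ coincides with the solution $\rho$ already constructed in Theorem \ref{thm:existenceUnboudedA}, so $\rho\in L^\infty_T(L^2_k)$ and it inherits the weighted energy estimate by lower semicontinuity of the norm.

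Finally, to upgrade from $L^\infty_T(L^2_k)$ to $C_T(L^2_k)$ I would argue exactly as in Proposition \ref{p:time-est}, but in the weighted space: from the uniform bound one gets $\rho\in C_w([0,T];L^2_k)$; then the continuity in time of $t\mapsto\|w\,\rho(t)\|_{L^2}$ follows from the weighted energy identity (established for the regularisations and passed to the limit), which combined with the weak continuity upgrades to strong continuity via the standard $\|\rho(t+h)-\rho(t)\|^2=\|\rho(t+h)\|^2-2\langle\rho(t+h),\rho(t)\rangle+\|\rho(t)\|^2$ trick in the Hilbert space $L^2_k$. Uniqueness is already contained in Proposition \ref{p:L-stab} (applied to $\delta\rho$ with zero data) since $L^2_k\subset L^2$.

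The step I expect to be the main obstacle is the uniform-in-$M$ control of the weighted transport commutator term $\int(a_M\cdot\nabla w^2)|\rho_M|^2\,dx$: one must resist the temptation to bound $|a_M|$ by $(1+|x|)$ and $|\nabla w^2|$ by $w^2$ separately (which loses a power of $|x|$), and instead exploit the pairing $|a_M\cdot\nabla w|\le C\,c(t)\,w$ coming from the fact that $|x|\,|\nabla|x|^k|\le k\,|x|^k$. Everything else is a faithful replay of the unweighted arguments of Section \ref{ss:classical}.
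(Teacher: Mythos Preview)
Your proposal is correct and follows essentially the same approach as the paper: a weighted $L^2$ energy estimate, where the key point is that the extra term coming from the weight is controlled via $(1+|x|)\,|\nabla|x|^k|\leq C\,(1+|x|^k)$, followed by Gr\"onwall. The only cosmetic difference is that the paper multiplies the equation by $|x|^k$ to obtain a Liouville equation for $\rho_k:=|x|^k\rho$ with forcing $\rho\,a\cdot\nabla|x|^k$ and then applies the unweighted $L^2$ estimate, whereas you test directly against $(1+|x|^k)^2\rho$; the paper also simply omits the regularisation step you spell out via the truncations $a_M$.
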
 

\begin{proof}[Proof of Lemma \ref{l:weight-m=0}]
Most of the claims of the Lemma follow from Theorem \ref{thm:existenceUnboudedA}. We have just to prove propagation of higher integrability (i.e. $k\geq1$) of the initial datum and external force.
Omitting a standard regularization procedure for the sake of brevity, we will perform energy estimates directly on equation \eqref{eq:LiouvilleUnb}.

%Alfio: first of all ..... 
First of all, for completeness, we consider the case $k=0$. If we take the $L^2$ scalar product of equation \eqref{eq:LiouvilleUnb} by $\rho$, by
standard computations we get
$$
\frac{1}{2}\,\frac{d}{dt}\left\|\rho\right\|^2_{L^2}\,+\,\frac{1}{2}\int\div a\,|\rho|^2\,dx\,=\,\int g\,\rho\,dx\,.
$$
From this relation, we easily get
\begin{equation} \label{est:m=0_0}
\frac{d}{dt}\left\|\rho\right\|_{L^2}\,\leq\,\|\div a\|_{L^\infty}\,\left\|\rho\right\|_{L^2}\,+\,\left\|g\right\|_{L^2}\,.
\end{equation}

Next, let us multiply equation \eqref{eq:LiouvilleUnb} by $|x|^k$: we get that $\rho_k\,:=\,|x|^k\,\rho$ satisfies
$$
\d_t\rho_k\,+\,\div\big(a\,\rho_k\big)\,=\,|x|^k\,g\,+\,\rho\,a\cdot\nabla|x|^k\,.
$$
Taking the $L^2$ scalar product of this equation by $\rho_k$ and repeating the same computations as above, we find
\begin{equation} \label{est:m=0_dt}
\frac{d}{dt}\left\|\rho_k\right\|_{L^2}\,\leq\,\|\div a\|_{L^\infty}\,\left\|\rho_k\right\|_{L^2}\,+\,\left\||x|^k\,g\right\|_{L^2}\,+\,\left\|\rho\,a\cdot\nabla|x|^k\right\|_{L^2}\,.
\end{equation}

We need to control the last term on the right-hand side of the previous estimate. For this, we use the fact that $\nabla|x|^k\,\sim\,|x|^{k-1}$ for all $k\geq1$, and Remark \ref{r:unb-drift}, to obtain
$$
\left\|\rho\,a\cdot\nabla|x|^k\right\|_{L^2}\,\leq\,C\,\|\nabla a\|_{L^\infty}\,\left\|\big(1+|x|^k\big)\,\rho\right\|_{L^2}\,.
$$
Inserting this bound into \eqref{est:m=0_dt} and summing up the resulting expression to \eqref{est:m=0_0}, we have 
\begin{equation} \label{est:m=0-tot}
\frac{d}{dt}\left\|\big(1+|x|^k\big)\,\rho\right\|_{L^2}\,\leq\,C\,\|\nabla a\|_{L^\infty}\,\left\|\big(1+|x|^k\big)\,\rho\right\|_{L^2}\,+\,\left\|\big(1+|x|^k\big)\,g\right\|_{L^2}\,.
\end{equation}
Hence, an application of Gr\"onwall's lemma gives the desired estimate.
\end{proof}

Next, we present results for  $m=1$.
\begin{lemma} \label{l:weight-m=1}
Assume that the hypotheses of Theorem \ref{th:weight} hold true with $m=1$.

Then there exists a unique solution $\rho\,\in\,C\bigl([0,T];H^1_k(\RR^d)\bigr)$ to problem \eqref{eq:LiouvilleUnb}. Moreover, there  exists a ``universal'' constant $C>0$
such that the following estimate holds true for any $t\in[0,T]$:
\begin{align*} %\label{est:weight}
\standardNorm{\rho(t)}_{H^1_k}\, \leq\,C\,\exp\left(C\,\int_0^t\left\|\nabla a(\tau)\right\|_{C^1_b}\,d\tau \right)\,
\left(\standardNorm{\rho_0}_{H^1_k}\, +\, \int_0^t \standardNorm{g(\tau)}_{H^1_k}\, d\tau \right)\,.
\end{align*}
\end{lemma}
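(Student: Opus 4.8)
The plan is to proceed exactly as in the proof of Lemma \ref{l:weight-m=0}, combining the classical $H^1$-theory of Theorem \ref{thm:existenceUnboudedA} (which already gives existence, uniqueness and propagation of $H^1$-regularity, together with the $H^1$-estimate) with weighted energy estimates. After a standard (and here omitted) regularization procedure, I would work directly on equation \eqref{eq:LiouvilleUnb}. The quantities to control are $\|(1+|x|^k)\,\rho(t)\|_{L^2}$, which is handled precisely by Lemma \ref{l:weight-m=0}, and $\|(1+|x|^k)\,\nabla\rho(t)\|_{L^2}$, which is the genuinely new ingredient. So the core of the argument is a differential inequality for $\|(1+|x|^k)\,\nabla\rho\|_{L^2}$ that closes on the full $H^1_k$-norm.

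The key computation: differentiate \eqref{eq:LiouvilleUnb} in $x_j$ to obtain
\begin{equation*}
\d_t(\d_j\rho)\,+\,\dive\bigl(a\,\d_j\rho\bigr)\,=\,\d_jg\,-\,\dive\bigl((\d_ja)\,\rho\bigr)\,,
\end{equation*}
then multiply by $|x|^k$ to get, with $\rho_{k,j}:=|x|^k\,\d_j\rho$,
\begin{equation*}
\d_t\rho_{k,j}\,+\,\dive\bigl(a\,\rho_{k,j}\bigr)\,=\,|x|^k\,\d_jg\,-\,|x|^k\,\dive\bigl((\d_ja)\,\rho\bigr)\,+\,\rho_{k,j}\,\tfrac{a\cdot\nabla|x|^k}{|x|^k}\,,
\end{equation*}
where the last term is rewritten so that $a\cdot\nabla|x|^k=|x|^k\cdot(\text{something bounded by }C\|\nabla a\|_{L^\infty}(1+|x|^{-1})|x|)$; more carefully one just keeps $\d_j\rho\,\bigl(a\cdot\nabla|x|^k\bigr)$ as a source term without dividing. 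Taking the $L^2$ scalar product with $\rho_{k,j}$ and using the standard computation (as for \eqref{eq:L^2-norm}) to absorb the $\dive a$ term, summing over $j$, one arrives at
\begin{equation*}
\frac{d}{dt}\bigl\|\,|x|^k\,\nabla\rho\,\bigr\|_{L^2}\,\leq\,\|\dive a\|_{L^\infty}\,\bigl\|\,|x|^k\,\nabla\rho\,\bigr\|_{L^2}\,+\,\bigl\|\,|x|^k\,\nabla g\,\bigr\|_{L^2}\,+\,\bigl\|\,|x|^k\,\dive\bigl((\nabla a)\,\rho\bigr)\bigr\|_{L^2}\,+\,\bigl\|\,\bigl(a\cdot\nabla|x|^k\bigr)\,\nabla\rho\,\bigr\|_{L^2}\,.
\end{equation*}
The last term is estimated, using $\nabla|x|^k\sim|x|^{k-1}$ and Remark \ref{r:unb-drift}, by $C\|\nabla a\|_{L^\infty}\,\|(1+|x|^k)\,\nabla\rho\|_{L^2}$, which is absorbable. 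The commutator-type term is bounded using $\dive((\nabla a)\,\rho)=(\nabla a)\cdot\nabla\rho+(\nabla\dive a)\,\rho$ (schematically) by $C\|\nabla a\|_{C^1_b}\bigl(\|(1+|x|^k)\,\nabla\rho\|_{L^2}+\|(1+|x|^k)\,\rho\|_{L^2}\bigr)$; note this is exactly where $\|\nabla a\|_{C^1_b}$, rather than merely $\|\nabla a\|_{L^\infty}$, enters the estimate, matching the statement. Adding this to estimate \eqref{est:m=0-tot} (applied as it stands) yields a differential inequality of the form
\begin{equation*}
\frac{d}{dt}\,\|\rho\|_{H^1_k}\,\leq\,C\,\|\nabla a\|_{C^1_b}\,\|\rho\|_{H^1_k}\,+\,\|g\|_{H^1_k}\,,
\end{equation*}
and Grönwall's lemma gives the claimed bound. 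Existence in $C\bigl([0,T];H^1_k\bigr)$, uniqueness, and the time-continuity of $t\mapsto\|\rho(t)\|_{H^1_k}$ follow by combining these a priori bounds with Theorem \ref{thm:existenceUnboudedA} and Lemma \ref{l:weight-m=0}, exactly as in the unweighted proof via Proposition \ref{p:time-est}; I would only sketch this.

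The main obstacle, modest as it is, is bookkeeping: making sure every weighted integration by parts is justified (hence the need for the regularization step, which I would mention but not detail), and correctly tracking which norm of $a$ — $\|\dive a\|_{L^\infty}$, $\|\nabla a\|_{L^\infty}$, or $\|\nabla a\|_{C^1_b}$ — controls each source term, so that the final exponential rate is $\|\nabla a\|_{C^1_b}$ as stated. There is no conceptual difficulty beyond what is already present in Lemma \ref{l:weight-m=0} and the classical $H^1$-theory; the weight $|x|^k$ interacts with the drift only through the harmless term $a\cdot\nabla|x|^k$, whose linear-growth compensation is supplied by Remark \ref{r:unb-drift}.
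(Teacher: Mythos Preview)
Your proposal is correct and follows essentially the same approach as the paper. The only cosmetic difference is that the paper, after differentiating in $x_j$ to obtain $\d_t(\d_j\rho)+\div(a\,\d_j\rho)=\d_jg-\d_j\div a\,\rho-\d_ja\cdot\nabla\rho$, invokes the already-proved differential inequality \eqref{est:m=0-tot} directly on this equation (with $\d_j\rho$ as unknown and the right-hand side as source), whereas you redo the weighted $L^2$ computation by hand; the resulting estimates and the use of Gr\"onwall are identical.
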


\begin{proof}[Proof of Lemma \ref{l:weight-m=1}]
Once again, it is enough to focus on the proof of the energy estimates. We start by differentiating equation \eqref{eq:LiouvilleUnb} with respect to $x^j$, for some $1\leq j\leq d$: we get
$$
\d_t\d_j\rho\,+\,\div\big(a\,\d_j\rho\big)\,=\,\d_jg\,-\,\d_j\div a\;\rho\,-\,\d_ja\cdot\nabla\rho\,.
$$
Applying estimate \eqref{est:m=0-tot} to this equation gives
\begin{align*}
\frac{d}{dt}\left\|\big(1+|x|^k\big)\,\d_j\rho\right\|_{L^2}\,&\leq\,C\,\|\nabla a\|_{L^\infty}\,\left\|\big(1+|x|^k\big)\,\d_j\rho\right\|_{L^2}\,+\,\left\|\big(1+|x|^k\big)\,\d_jg\right\|_{L^2}\,+ \\
&\qquad\qquad+\,\left\|\big(1+|x|^k\big)\,\d_j\div a\;\rho\right\|_{L^2}\,+\,\left\|\big(1+|x|^k\big)\,\d_ja\cdot\nabla\rho\right\|_{L^2}\,,
\end{align*}
from which we obtain, for another constant $C>0$, the following bound:
\begin{align}
\hspace{-0.5cm} \frac{d}{dt}\left\|\big(1+|x|^k\big)\,\nabla\rho\right\|_{L^2}\,&\leq\,C\,\|\nabla a\|_{L^\infty}\,\left\|\big(1+|x|^k\big)\,\nabla\rho\right\|_{L^2}\,+\,
 \left\|\big(1+|x|^k\big)\,\nabla g\right\|_{L^2}\,+  \label{est:m=1-part} \\
&\qquad\qquad\qquad\qquad\qquad\qquad\qquad\quad+\,\left\|\nabla^2a\right\|_{L^\infty}\,\left\|\big(1+|x|^k\big)\,\rho\right\|_{L^2}\,. \nonumber
\end{align}

We can now sum up \eqref{est:m=0-tot} and \eqref{est:m=1-part} to get
\begin{equation} \label{est:m=1_tot}
\frac{d}{dt}\left\|\rho\right\|_{H^1_k}\,\leq\,C\,\left\|\nabla a\right\|_{C^1_b}\,\left\|\rho\right\|_{H^1_k}\,+\,\left\|g\right\|_{H^1_k}\,,
\end{equation}
and Gr\"onwall's lemma allows us to get the result.
\end{proof}

Now, we can address the proof of the general case, namely of Theorem \ref{th:weight}.
\begin{proof}[Proof of Theorem \ref{th:weight}]
We argue by induction on the order of derivatives, i.e. on $m$, the cases $m=0$ and $m=1$ being given by Lemma \ref{l:weight-m=0} and Lemma \ref{l:weight-m=1}, 
respectively.

Let $m\geq2$, and let us assume that, for any $0\leq\ell\leq m-1$, the following inequality holds true
\begin{align}
\hspace{-0.5cm} \frac{d}{dt}\left\|\big(1+|x|^k\big)\,\nabla^\ell\rho\right\|_{L^2}\,&\leq\,C\,\|\nabla a\|_{L^\infty}\,\left\|\big(1+|x|^k\big)\,\nabla^\ell\rho\right\|_{L^2}\,+\,
 \left\|\big(1+|x|^k\big)\,\nabla^\ell g\right\|_{L^2}\,+  \label{est:m-part} \\
&\quad\qquad\qquad\qquad\qquad +\,\sum_{0\leq p\leq \ell-1}\left\|\nabla^{p+1}a\right\|_{L^\infty}\,\left\|\big(1+|x|^k\big)\,\nabla^p\rho\right\|_{L^2}\,. \nonumber
\end{align}
Our goal is to prove an analogous estimate also for $\left\|\big(1+|x|^k\big)\,\nabla^m\rho\right\|_{L^2}$.

For this purpose, let us take an $\alpha\in\N^d$ such that $|\alpha|=m$, and let us apply it to \eqref{eq:LiouvilleUnb}. Thus, we deduce the following equality
\begin{equation} \label{eq:D^alpha}
\d_tD^\alpha\rho\,+\,\div\big(a\,D^\alpha\rho\big)\,=\,D^\alpha g\,-\,\sum_{0<\beta\leq\alpha}D^\beta\div a\;D^{\alpha-\beta}\rho\,-\,\sum_{0<\beta\leq\alpha}D^\beta a\cdot\nabla D^{\alpha-\beta}\rho\,,
\end{equation}
where the notation $0<\beta$ means that $\beta\in\N^d$  has at least one non-zero component.

Following the computations of Lemma \ref{l:weight-m=1}, we need to estimate the $L^2_k$ norm of the last two terms in the right-hand side of the previous equation.
First of all, we have
$$
\left\|\big(1+|x|^k\big)\,D^\beta\div a\;D^{\alpha-\beta}\rho\right\|_{L^2}\,\leq\,\left\|\nabla^{|\beta|+1}a\right\|_{L^\infty}\,\left\|\big(1+|x|^k\big)\,D^{\alpha-\beta}\rho\right\|_{L^2}\,.
$$
Notice that, since $\beta>0$, the terms $D^{\alpha-\beta}\rho$ are lower order. The same can be said of the terms
$$
\left\|\big(1+|x|^k\big)\,D^\beta a\cdot\nabla D^{\alpha-\beta}\rho\right\|_{L^2}\,\leq\,\left\|\nabla^{|\beta|}a\right\|_{L^\infty}\,\left\|\big(1+|x|^k\big)\,\nabla D^{\alpha-\beta}\rho\right\|_{L^2}\,,
$$
whenever $|\beta|\geq2$; on the contrary, when $|\beta|=1$, the terms $\nabla D^{\alpha-\beta}\rho$ contain exactly $m$ derivatives.

Therefore, applying estimate \eqref{est:m=0-tot} to equation \eqref{eq:D^alpha}, and using the previous controls, we infer
\begin{align} \label{est:m}
\hspace{-0.7cm}
\frac{d}{dt}\left\|\big(1+|x|^k\big)\,\nabla^m\rho\right\|_{L^2}\,&\leq\,C\,\|\nabla a\|_{L^\infty}\,\left\|\big(1+|x|^k\big)\,\nabla^m\rho\right\|_{L^2}\,+\,
\left\|\big(1+|x|^k\big)\,\nabla^mg\right\|_{L^2}\,+ \\
&\quad\qquad\qquad\qquad\qquad +\,\sum_{0<\beta\leq\alpha}\left\|\nabla^{|\beta|+1}a\right\|_{L^\infty}\,\left\|\big(1+|x|^k\big)\,D^{\alpha-\beta}\rho\right\|_{L^2} \nonumber \\
&\leq\,C\,\|\nabla a\|_{L^\infty}\,\left\|\big(1+|x|^k\big)\,\nabla^m\rho\right\|_{L^2}\,+\, \left\|\big(1+|x|^k\big)\,\nabla^mg\right\|_{L^2}\,+ \nonumber \\
&\quad\qquad\qquad\qquad\qquad +\,\sum_{0\leq\ell\leq m-1}\left\|\nabla^{\ell+1}a\right\|_{L^\infty}\,\left\|\big(1+|x|^k\big)\,\nabla^\ell\rho\right\|_{L^2}\,, \nonumber
\end{align}
which proves formula \eqref{est:m-part} at the level $m$. Therefore that formula is true for any $m\in\N$, by induction.

Now, summing up inequality \eqref{est:m=1-part} for $\ell=0$ to $m$, and by the definition of $H^m_k$ norms, we get, for some constant also depending on $m$, the following bound: 
$$
\frac{d}{dt}\left\|\rho\right\|_{H^m_k}\,\leq\,C\,\left\|\nabla a\right\|_{C^m_b}\,\left\|\rho\right\|_{H^m_k}\,+\,\left\|g\right\|_{H^m_k}\,,
$$
which immediately implies the claimed estimate. Theorem \ref{th:weight} is now proved.
\end{proof} 

\section{The Liouville control-to-state map} \label{s:control-map}

In this section, we define the Liouville control-to-state map and investigate its continuity and differentiability properties.
For reasons which will appear clear in the following analysis, we need to resort to weighted spaces $H^m_k$, as introduced in Section \ref{ss:weight}. 

We start by making an important remark.
\begin{remark} \label{r:data}
Throughout this section, the data of the Liouville equation has to be thought as fixed. Specifically,
for $m\geq0$ and $k\geq0$, we take an initial datum $\rho_0\,\in\,H^m_k$, a source term 
$g\,\in\,L^1_T(H^m_k)$, and a drift function $a_0\,\in\,L^1_T(C^{m+1})$, with $\nabla a_0\,\in\,L^1_T(C^m_b)$.

We are then interested in the dependence of the solution $\rho$ to the Liouville
equation \eqref{eq:LiouvilleUnb}, with drift $a$ given by \eqref{controlmechanism},
on the control state $u\in U_{ad}$, where $U_{ad}$ has been defined in \eqref{setUad}.
\end{remark}

\begin{comment}
Let us start by recalling that,
with the energy estimate, we can conclude the \enquote{conservation} of the solution with respect to $\rho_0$ and $g$. Let us define $C=C(u_{\max},T):=\exp(Tu_{\max})$ with $u_{\max}:=\max\{|u_a|,|u_b| \}$ then follows immediately:
\begin{corollary}
\label{cor:conservation}
Let $\rho$ be the solution of \eqref{eq:LiouvilleUnb} given by Theorem \ref{thm:ex-u_L}  and let $a,\rho_0$ and $g$ satisfy hypothesis \eqref{hyp:data}. Then there exists a constant $C=C(u_{\max},T)>0$, such that 
	$$\standardNorm{\rho(t)}_{H^m} \leq C \standardNorm{\rho_0}_{H^m}+C\standardNorm{g}_{L^1H^m}~~~~~~\forall t \in [0,T].$$
\end{corollary}
\end{comment}

\subsection{Definition and continuity properties} \label{ss:G-def}
We remark that the statements of Theorems \ref{thm:existenceUnboudedA} and \ref{th:weight} cover the case of the Liouville equation with the controlled drift function given by \eqref{controlmechanism}, where 
$u \in U_{ad}$. In particular, the next proposition-definition immediately follows.
\begin{prop} \label{p:G}
Fixed data $\rho_0$, $g$ and $a_0$ as in Remark \ref{r:data}, let us consider drift functions $a$ of the form \eqref{controlmechanism}, with $u\in U_{ad}$.
Introduce the \emph{Liouville control-to-state map} $G$, defined by
$$
G:\, U_{ad}\, \longrightarrow\, L^\infty\big([0,T];L^2(\R^d)\big)\,,  \qquad  u\, \mapsto\, \rho := G(u)\,,
$$
where $\rho$ is the unique solution to the Liouville equation with the given data.

Then $G$ is well-defined.
\end{prop}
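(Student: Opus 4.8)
The plan is to reduce the statement to the well-posedness results already established, namely Theorem \ref{thm:existenceUnboudedA} and, in the weighted setting, Theorem \ref{th:weight}. Since the data $\rho_0$, $g$, $a_0$ are fixed as in Remark \ref{r:data}, the only thing to check is that, for every $u\in U_{ad}$, the controlled drift $a(t,x;u)\,=\,a_0(t,x)\,+\,u_1(t)\,+\,x\circ u_2(t)$ given by \eqref{controlmechanism} fulfils the structural hypotheses \eqref{hyp:data}. Once this is done, the existence of a unique solution $\rho\in C\bigl([0,T];H^m_k(\R^d)\bigr)\subset L^\infty\bigl([0,T];L^2(\R^d)\bigr)$ is immediate, and uniqueness guarantees that the assignment $u\mapsto G(u):=\rho$ is single-valued, hence well-defined.

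First I would verify the regularity of the drift. Write $a(\cdot;u)\,=\,a_0\,+\,b_u$ with $b_u(t,x):=u_1(t)+x\circ u_2(t)$. For a.e. $t$ the map $x\mapsto b_u(t,x)$ is affine, hence of class $C^\infty$ in space (with at most linear growth at infinity); moreover, since $u\in U_{ad}\subset\LL^\infty_T(\R^d)$, the pointwise bound $u^a\le u(t)\le u^b$ holds for a.e. $t$, so $b_u\in L^\infty_T(C^{m+1})\subset L^1_T(C^{m+1})$ for every $m$, and therefore $a(\cdot;u)\in L^1_T(C^{m+1})$ because $a_0\in L^1_T(C^{m+1})$ by Remark \ref{r:data}. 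Next, $\nabla_x b_u(t,x)\,=\,\mathrm{diag}\bigl(u_2(t)\bigr)$ is constant in $x$ and bounded in time, while all higher-order space derivatives of $b_u$ vanish; consequently $\nabla b_u\in L^\infty_T(C^m_b)\subset L^1_T(C^m_b)$, and combining with $\nabla a_0\in L^1_T(C^m_b)$ we obtain $\nabla a(\cdot;u)\in L^1_T(C^m_b)$. Hence \eqref{hyp:data} holds, with the data $\rho_0\in H^m_k$ and $g\in L^1_T(H^m_k)$ fixed in Remark \ref{r:data}.

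Then I would simply invoke Theorem \ref{th:weight} (or Theorem \ref{thm:existenceUnboudedA} in the case $k=0$): for the given data and for each fixed $u\in U_{ad}$, problem \eqref{eq:LiouvilleUnb} with drift $a(\cdot;u)$ admits a unique solution $\rho=\rho_u\in C\bigl([0,T];H^m_k(\R^d)\bigr)$, which in particular belongs to $L^\infty\bigl([0,T];L^2(\R^d)\bigr)$ since $H^m_k\subset L^2$ and $C\bigl([0,T];L^2\bigr)\subset L^\infty\bigl([0,T];L^2\bigr)$. Setting $G(u):=\rho_u$ therefore yields a well-defined map $G:\,U_{ad}\to L^\infty\bigl([0,T];L^2(\R^d)\bigr)$, as claimed. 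There is essentially no obstacle here: the content of the proposition is entirely contained in the well-posedness theory of Section \ref{sec-LiouvilleTransport}. The only (routine) point to be careful about is that the bilinear term $x\circ u_2(t)$ makes $a$ unbounded in space — but precisely with linear growth, which is exactly why hypotheses \eqref{hyp:data} were formulated in that generality.
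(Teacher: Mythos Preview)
Your proposal is correct and follows exactly the approach of the paper: the paper does not give a formal proof of this proposition, but simply remarks (just before the statement) that Theorems \ref{thm:existenceUnboudedA} and \ref{th:weight} cover the controlled drift \eqref{controlmechanism} for $u\in U_{ad}$, so that the proposition-definition ``immediately follows''. Your argument spells out explicitly the verification that $a(\cdot;u)$ satisfies \eqref{hyp:data}, which is precisely the content the paper leaves implicit.
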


Let us make an important comment about the previous definition.
\begin{remark} \label{r:G-loss}
Notice that the theory developed in Sections \ref{ss:classical} and \ref{ss:weight} entails that the solution $\rho$ actually belongs to $C_T(H^m_k)$.
However, for reasons that will appear clear in what follows (namely, a \emph{loss of regularity}, both in $m$ and $k$, when proving Fr\'echet differentiability of $G$),
it is convenient to look at $G$ as a map with values in the space with the weakest topology. Notice that, for any $(m,k)\in\N^2$, the space $H^m_k$ is embedded in $L^2$.

Finally, we consider $L^\infty$ regularity with respect to time, because it will be convenient also to look at weak continuity properties of $G$, see Proposition \ref{p:ww-G} below.
\end{remark}

Next, we study some properties of the map $G$ that are relevant for the analysis of ensemble optimal control problems.
We start by establishing that $G$ is weak-weak continuous from $U_{ad}$ into $L^\infty_T(L^2)$. Notice that we do not need any restriction on $m$ and $k$ (and so, on the initial data) in this case.
\begin{prop} \label{p:ww-G}
Take $m\geq0$ and $k\geq0$ and initial data $\rho_0\,\in\,H^m_k$, $g\,\in\,L^1_T(H^m_k)$ and  $a_0\,\in\,L^1_T(C^{m+1})$ such that $\nabla a_0\,\in\,L^1_T(C^m_b)$.
Let $u\in U_{ad}$ and $\big(u^l\bigr)_l\,\subset\,U_{ad}$ be a sequence of controls, and assume that $u^l\,\stackrel{*}{\rightharpoonup}\,u$ in $\LL^\infty_T$.

Then $G(u^l)\,\stackrel{*}{\rightharpoonup}\,G(u)$ in the weak-$*$ topology of $L^\infty_T(L^2)$.
\end{prop}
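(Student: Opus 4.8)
The plan is to use the uniform energy bounds from Theorem \ref{th:weight} (or already Theorem \ref{thm:existenceUnboudedA}) to extract a weak-$*$ limit of $\rho^l := G(u^l)$, then pass to the limit in the weak formulation \eqref{eq:LweakUnb} of the Liouville equation, and finally invoke uniqueness of solutions to identify the limit with $G(u)$. First I would observe that, since $u^l \weakStarConv u$ in $\LL^\infty_T$, the sequence $(u^l)_l$ is bounded in $\LL^\infty_T$; hence the drifts $a^l(t,x) := a_0(t,x) + u_1^l(t) + x\circ u_2^l(t)$ satisfy $\|\nabla a^l\|_{L^1_T(C^0_b)} \leq \|\nabla a_0\|_{L^1_T(C^0_b)} + C\,T\,\sup_l\|u_2^l\|_{L^\infty} =: K$, uniformly in $l$. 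By Remark \ref{r:unb-drift}, each $a^l$ has at most linear growth, uniformly in $l$. Then estimate \eqref{est:en-estUnb} (applied with $m=0$, which suffices here since $H^m_k \hookrightarrow L^2$) yields $\|\rho^l(t)\|_{L^2} \leq C\big(\|\rho_0\|_{L^2} + \|g\|_{L^1_T(L^2)}\big)\exp(CK)$ for all $t$, so that $(\rho^l)_l$ is bounded in $L^\infty_T(L^2)$. By Banach--Alaoglu, up to a subsequence, $\rho^l \weakStarConv \rho$ in $L^\infty_T(L^2)$ for some $\rho$.

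Next I would pass to the limit in the weak formulation: for every $\phi \in C_c^\infty(\R^d \times [0,T[)$,
\oES{
-\int_0^T\!\!\!\int_{\R^d}\rho^l\,\d_t\phi\,dx\,dt \,-\,\int_0^T\!\!\!\int_{\R^d}\rho^l\,a^l\cdot\nabla\phi\,dx\,dt\,=\,\int_0^T\!\!\!\int_{\R^d}g\,\phi\,dx\,dt\,+\,\int_{\R^d}\rho_0\,\phi(0)\,dx\,.
}
The first term converges by the weak-$*$ convergence of $\rho^l$, since $\d_t\phi \in L^1_T(L^2)$. For the second term, write $a^l\cdot\nabla\phi = (a_0 + u_1^l + x\circ u_2^l)\cdot\nabla\phi$; since $\nabla\phi$ is supported in a fixed compact set $K$, the functions $a_0\cdot\nabla\phi$, $\nabla\phi$ and $x\circ\nabla\phi$ all lie in $L^1_T(L^2)$, while $u_1^l \weakStarConv u_1$ and $u_2^l \weakStarConv u_2$ in $L^\infty_T$. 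The delicate point is that we have a product of two merely weakly converging objects, $\rho^l$ and $u^l$; however this is resolved by noting that one factor, $\rho^l$, converges weakly-$*$ in $L^\infty_T(L^2)$ and the other, $u^l$, is a function of $t$ alone. Concretely, against a test function of the form $\psi(t)w(x)$ one has $\int_0^T u_j^l(t)\big(\int \rho^l(t,x)w(x)\,dx\big)\psi(t)\,dt$; one checks that $t \mapsto \int\rho^l(t,x)w(x)\,dx$ converges \emph{strongly} in, say, $L^2(0,T)$ — because $(\rho^l)_l$ is bounded in $L^\infty_T(L^2)$ and, from the equation, $(\d_t\rho^l)_l$ is bounded in $L^1_T(H^{-1}_{\rm loc})$, so an Aubin--Lions / Arzel\`a--Ascoli argument gives compactness of the scalar functions $t\mapsto\langle\rho^l(t),w\rangle$ — and then the product of a strongly convergent and a weakly-$*$ convergent sequence in the dual pairing $L^2(0,T)\times L^\infty(0,T)$ passes to the limit. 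Hence $\int\int \rho^l\,a^l\cdot\nabla\phi \to \int\int \rho\,a\cdot\nabla\phi$, and $\rho$ is a weak solution of the Liouville equation with drift $a$ given by \eqref{controlmechanism}, datum $\rho_0$ and source $g$.

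Finally, by the uniqueness part of Theorem \ref{thm:existenceUnboudedA} (applied to the drift $a$, which satisfies \eqref{hyp:data}), we conclude $\rho = G(u)$. Since the limit is the same for every subsequence, the whole sequence converges: $G(u^l) \weakStarConv G(u)$ in $L^\infty_T(L^2)$. I expect the main obstacle to be precisely the justification of passing to the limit in the product $\rho^l\,a^l$; everything else is a routine combination of the a priori estimate, Banach--Alaoglu, and the uniqueness statement already established. One should be slightly careful to phrase the product argument cleanly — using that the ``bad'' factor $u^l$ depends only on time while $\rho^l$ enjoys enough regularity in time through the equation to get strong compactness of the relevant scalar quantities — rather than trying to prove compensated-compactness-type statements, which are not needed here.
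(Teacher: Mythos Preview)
Your proposal is correct and follows the same strategy as the paper's proof: uniform energy bounds, weak-$*$ compactness, passage to the limit in the weak formulation using the $L^1_T(H^{-1}_{\rm loc})$ bound on $\partial_t\rho^l$ to gain time-compactness, and then uniqueness to identify the limit and conclude convergence of the full sequence. The only cosmetic difference is that the paper applies Rellich--Kondrachov plus interpolation to obtain $\rho^l\to\rho$ strongly in $L^1_T(H^{-s}_{\rm loc})$ and then splits the product $\rho^l a^l - \rho a$, whereas you pair against a fixed spatial test function to reduce to one-dimensional compactness of the scalar maps $t\mapsto\langle\rho^l(t),w\rangle$ (which is slightly more elementary, at the minor cost of invoking density of tensor-product test functions).
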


\begin{proof}
Of course, it is enough to prove the previous proposition in the case of minimal regularity and integrability, namely for $m=k=0$.

By definition of the set $U_{ad}$, we infer that $(u^l)_l$ is uniformly bounded in $\LL^\infty_T$. On the other hand, by hypotheses and Theorem \ref{thm:existenceUnboudedA},
for all $l\in\N$ there exists a unique $\rho^l\,:=\,G(u^l)\,\in\,C_T(L^2)$ which solves the Liouville equation \eqref{eq:LiouvilleUnb}.
In addition, by inequality \eqref{est:en-estUnb}, we deduce that $\big(\rho^l\big)_l$ is uniformly bounded in $C_T(L^2)$; then there exists $\rho\in L^\infty_T(L^2)$
such that, up to extraction of a subsequence, $\rho^l\,\stackrel{*}{\rightharpoonup}\,\rho$ in $L^\infty_T(L^2)$.

Now, our goal is to prove that $\rho$ is a weak solution to the Liouville equation
\begin{equation} \label{eq:limit_rho}
\d_t\rho\,+\,\div\big(a(t,x;u)\,\rho\big)\,=\,g\,,\qquad\qquad\mbox{ with }\quad \rho_{|t=0}\,=\,\rho_0\,.
\end{equation}
Indeed, if this is the case, by uniqueness we get $\rho\,=\,G(u)$ and that the whole sequence $\big(\rho^l\big)_l$ converges, achieving in this way the proof of the proposition. 

In order to prove our claim, we need to pass to the limit in the weak formulation of the Liouville equation for $\rho^l$, when $l\ra+\infty$. Recalling also our special choice
\eqref{controlmechanism}, it is easy to see that the only term which presents some difficulty is the non-linear term
\begin{equation} \label{eq:integral}
\int^T_0\int_{\R^d}\,\rho^l\,\left(u^l_1(t)\,+\,x\circ u^l_2(t)\right)\cdot\nabla\phi\,dx\,dt\,,\qquad\qquad \mbox{ for any fixed }\quad \phi\,\in\,C^\infty_c\bigl([0,T[\,\times\R^d\bigr)\,.
\end{equation}
Therefore, let us focus on the convergence of this integral. First of all, by inspection of the equation $\d_t\rho^l\,=\,-\div\big(a(x,t;u^l)\,\rho^l\big)\,+\,g$, we discover that
$\big(\d_t\rho^l\big)_l\,\subset\,L^1_T(H^{-1}_{\rm loc})$, which implies that $\big(\rho^l\big)_l\,\subset\,W^{1,1}_T(H^{-1}_{\rm loc})$. Then, by the Rellich-Kondrachov theorem and
Cantor's diagonal procedure, we discover that, up to an extraction of a subsequence that we do not relabel, $\big(\rho^l\big)_l$ is compact, and then strongly convergent, in $L^1_T(H^{-2}_{\rm loc})$.
Interpolating this compactness result with the uniform boundedness in $L^\infty_T(L^2_{\rm loc})$, we discover that $\rho^l\,\longrightarrow\,\rho$ in $L^1_T(H^{-s}_{\rm loc})$, for
any $s>0$.

In view of the uniform boundedness of $\big(u^l_1\big)_l$ and $\big(u^l_2\big)_l$ in $L^\infty_T$, the previous property is enough to pass to the limit in the integral \eqref{eq:integral}, and prove that it converges to
$$
\int^T_0\int_{\R^d}\,\rho\,\left(u_1(t)\,+\,x\circ u_2(t)\right)\cdot\nabla\phi\,dx\,dt\,,
$$
for all given $\phi\,\in\,C^\infty_c\bigl([0,T[\,\times\R^d\bigr)$. Thus, we get that \eqref{eq:limit_rho} is satisfied, and then we can conclude the proof as already mentioned above.
\end{proof}

For the analysis of our optimal control problem, see Section \ref{s:ocp} below, we need stronger regularity properties for $G$.
We start by showing Lipschitz continuity, which will be the basis to prove G\^ateaux differentiability of $G$, in the next paragraph. The key here is to perform careful estimates
in order to identify the right topology: the reason is that, due to hyperbolicity of the Liouville equation, stability estimates involve a loss of regularity.

\begin{lemma} \label{l:Lip}
Let the data $\rho_0$, $g$ and $a_0$ be fixed as in Remark \ref{r:data} above, with $m\geq1$ and $k\geq1$. Let $u$ and $v$ be in $U_{ad}$, and denote by $G(u)$ and $G(v)$ 
the corresponding $C_T(H^m_k)$ solutions to \eqref{eq:LiouvilleUnb}, with drift $a$ given by \eqref{controlmechanism}. Set $\de G\,:=\,G(u)-G(v)$.

Then there exists a ``universal'' constant $C>0$ such that, for all $1\leq \ell\leq k$, if we set 
\begin{equation} \label{def:K_0}
K^{(\ell)}_0\,:=\,C\,\exp\Big(C\left(\left\|\nabla a_0\right\|_{L^1_T(C^1_b)}\,+\,\|u\|_{\LL^1_T}\,+\,\|v\|_{\LL^1_T}\right)\Big)\times
\left(\left\|\rho_0\right\|_{H^1_\ell}\,+\,\left\|g\right\|_{L^1_T(H^1_\ell)}\right)\,,
\end{equation}
then, for all $t\in[0,T]$, one has
\begin{align*}
\left\|\de G(t)\right\|_{L^2_{\ell-1}}\, %+\,\standardNorm{\de G(t)}_{H^1_1}\,
\leq\,K^{(\ell)}_0\,\int^t_0|u(\tau)-v(\tau)|\,d\tau\,.
\end{align*}
If moreover $m\geq2$ and we set
\begin{equation} \label{def:K_1}
K^{(\ell)}_1\,:=\,C\,\exp\Big(C\left(\left\|\nabla a_0\right\|_{L^1_T(C^2_b)}\,+\,\|u\|_{\LL^1_T}\,+\,\|v\|_{\LL^1_T}\right)\Big)\times
\left(\left\|\rho_0\right\|_{H^2_\ell}\,+\,\left\|g\right\|_{L^1_T(H^2_\ell)}\right)\,,
\end{equation}
we also have
\begin{align*}
\standardNorm{\de G(t)}_{H^1_{\ell-1}}\,\leq\,K^{(\ell)}_1\,\int^t_0|u(\tau)-v(\tau)|\,d\tau\,.
\end{align*}
\end{lemma}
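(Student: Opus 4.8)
The plan is to reduce the lemma to the weighted energy estimates of Section~\ref{ss:weight}, applied to the equation solved by $\de G$. Set $\rho_1:=G(u)$, $\rho_2:=G(v)$ and $\de u:=u-v$. Since $\rho_1$ and $\rho_2$ solve \eqref{eq:LiouvilleUnb} with the same data $\rho_0,g$ but with drifts $a(\cdot;u)$ and $a(\cdot;v)$ of the form \eqref{controlmechanism}, subtracting the two equations and using that $a(t,x;u)-a(t,x;v)=\de u_1(t)+x\circ\de u_2(t)$, I would get
\[
\d_t\de G+\div\big(a(t,x;u)\,\de G\big)=-\div\big((\de u_1(t)+x\circ\de u_2(t))\,\rho_2\big)=:g_\de\,,\qquad \de G_{|t=0}=0\,,
\]
the initial datum vanishing because $\rho_1$ and $\rho_2$ start from the same $\rho_0$. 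Expanding the divergence, $g_\de=-(\de u_1+x\circ\de u_2)\cdot\nabla\rho_2-(\div(x\circ\de u_2))\,\rho_2$, where $\div(x\circ\de u_2)$ is constant in $x$; thus $g_\de$ carries one derivative of $\rho_2$ and one unbounded factor $|x|$ in front of $\nabla\rho_2$, which is exactly what forces the loss of one derivative and one weight power in the statement.

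For the first estimate I would apply Lemma~\ref{l:weight-m=0} with $k=\ell-1$ to the Cauchy problem above: the drift $a(\cdot;u)$ satisfies \eqref{hyp:data} (it is $a_0$ plus a term affine in $x$), and since $\de G_{|t=0}=0$ the estimate reads $\|\de G(t)\|_{L^2_{\ell-1}}\le C\exp\big(C\int_0^t\|\nabla a(\tau;u)\|_{L^\infty}\,d\tau\big)\int_0^t\|g_\de(\tau)\|_{L^2_{\ell-1}}\,d\tau$. Then I would bound $\|g_\de(\tau)\|_{L^2_{\ell-1}}\le C\,|\de u(\tau)|\,\|\rho_2(\tau)\|_{H^1_\ell}$: multiplying $g_\de$ by $(1+|x|^{\ell-1})$, the term $|x|\,\nabla\rho_2$ is controlled by $\|\nabla\rho_2\|_{L^2_\ell}$ since $|x|(1+|x|^{\ell-1})\le C(1+|x|^\ell)$, while the zeroth-order term is harmless. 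Next, because $\nabla a(\cdot;u)=\nabla a_0+\mathrm{diag}(u_2)$ has no higher-order part coming from the bilinear term, $\|\nabla a(\tau;u)\|_{C^m_b}\le\|\nabla a_0(\tau)\|_{C^m_b}+C|u_2(\tau)|$ for every $m\ge0$; and Theorem~\ref{th:weight} with $m=1$, $k=\ell$ (using $H^m_k\hookrightarrow H^1_\ell$ for $1\le\ell\le k$) yields the uniform bound $\sup_{[0,T]}\|\rho_2\|_{H^1_\ell}\le C\exp\big(C(\|\nabla a_0\|_{L^1_T(C^1_b)}+\|v\|_{\LL^1_T})\big)\big(\|\rho_0\|_{H^1_\ell}+\|g\|_{L^1_T(H^1_\ell)}\big)$. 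Combining these three ingredients, and bounding $\|u_2\|_{L^1_T},\|v_2\|_{L^1_T}$ by $\|u\|_{\LL^1_T},\|v\|_{\LL^1_T}$, produces exactly $\|\de G(t)\|_{L^2_{\ell-1}}\le K^{(\ell)}_0\int_0^t|u-v|\,d\tau$ with $K^{(\ell)}_0$ as in \eqref{def:K_0}.

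The second estimate is obtained in the same way, now applying Lemma~\ref{l:weight-m=1} with $k=\ell-1$. The only new point is that differentiating $g_\de$ once more produces a term of the schematic form $|x|\,\nabla^2\rho_2$, so one needs $\|g_\de(\tau)\|_{H^1_{\ell-1}}\le C\,|\de u(\tau)|\,\|\rho_2(\tau)\|_{H^2_\ell}$ — again spending exactly one weight power on the $|x|$ in front of $\nabla^2\rho_2$ — and the uniform bound on $\|\rho_2\|_{H^2_\ell}$ now comes from Theorem~\ref{th:weight} with $m=2$, $k=\ell$, which is precisely why the hypothesis $m\ge2$ enters here. Assembling the pieces, together with $\int_0^t\|\nabla a(\tau;u)\|_{C^1_b}\,d\tau\le\|\nabla a_0\|_{L^1_T(C^1_b)}+C\|u\|_{\LL^1_T}$, gives $\|\de G(t)\|_{H^1_{\ell-1}}\le K^{(\ell)}_1\int_0^t|u-v|\,d\tau$ with $K^{(\ell)}_1$ as in \eqref{def:K_1}.

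The main obstacle is bookkeeping rather than genuine difficulty: one must check carefully that estimating $g_\de$ in $L^2_{\ell-1}$ (resp.\ $H^1_{\ell-1}$) consumes \emph{only} one derivative and one weight power of $\rho_2$, i.e.\ requires just $\rho_2\in H^1_\ell$ (resp.\ $H^2_\ell$), and that all exponential prefactors can be absorbed into the single constant $K^{(\ell)}_j$ using the affine-in-$x$ structure of the bilinear drift. As in the proofs of Lemmas~\ref{l:weight-m=0}–\ref{l:weight-m=1}, the energy estimates are first carried out on a regularized problem and then passed to the limit, so no further care is needed on that point.
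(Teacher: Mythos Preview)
Your proposal is correct and follows essentially the same route as the paper: derive the equation for $\de G$ with forcing $-\div\big((\de u_1+x\circ\de u_2)\,G(v)\big)$ and zero initial datum, apply the weighted $L^2_{\ell-1}$ (resp.\ $H^1_{\ell-1}$) energy estimate to it, bound the forcing by $C|\de u|\,\|G(v)\|_{H^1_\ell}$ (resp.\ $C|\de u|\,\|G(v)\|_{H^2_\ell}$) via the affine-in-$x$ structure, and control $\|G(v)\|_{H^j_\ell}$ by Theorem~\ref{th:weight}. The paper's proof does exactly this, only spelling out the $H^1_{\ell-1}$ bound on the forcing more explicitly by splitting $\nabla\div(\oline a\,G(v))$ into four pieces and noting $\nabla\div\oline a\equiv0$.
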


\begin{proof}
%Alfio: first of all removed
By linearity of the Liouville equation, we find that $\de G$ satisfies
\begin{equation} \label{eq:dG}
\d_t\de G\,+\,\div\big(a(t,x;u)\,\de G\big)\,=\,-\,\div\big(\oline{a}(t,x;u-v)\,G(v)\big)\,,\qquad\qquad \de G_{|t=0}\,=\,0\,,
\end{equation}
where we have set
\begin{equation} \label{def:a(u)-a(v)}
\oline{a}(t,x;u-v)\,:=\,a(t,x;u)-a(t,x;v)\,=\,(u_1-v_1)+x\circ(u_2-v_2)\,.
\end{equation}

Applying $L^2_{\ell-1}$ estimates of Theorem \ref{thm:existenceUnboudedA} to equation \eqref{eq:dG}, we immediately get
$$
\left\|\de G(t)\right\|_{L^2_{\ell-1}}\,\leq\,C\,\exp\left(C\int^t_0\left\|\nabla a(\tau,x;u)\right\|_{L^\infty}\,d\tau\right)\,
\int^t_0\left\|\div\big(\oline{a}(\tau,x;u-v)\,G(v)\big)\right\|_{L^2_{\ell-1}}\,d\tau\,.
$$
By explicit computations and using the Leibniz rule, we deduce that
\begin{align}
&\left\|\div\big(\oline{a}(\tau,x;u-v)\,G(v)\big)\right\|_{L^2_{\ell-1}}\,\leq\,|u(\tau)-v(\tau)|\,\left(\left\|G(v)\right\|_{L^2_{\ell-1}}\,+\,\left\|\nabla G(v)\right\|_{L^2_\ell}\right)
\label{est:dG_L^2_part} \\
&\qquad\qquad
\leq\,C\,|u(\tau)-v(\tau)|\,\exp\left(C\int^\tau_0\left\|\nabla a(s,x;v)\right\|_{C^1_b}\,ds\right)\,\left(\|\rho_0\|_{H^1_\ell}+\int^\tau_0\|g(s)\|_{H^1_\ell}\,ds\right)\,, \nonumber
\end{align}
where the second inequality holds true in view of the bound
$\left\|G(v)\right\|_{L^2_{\ell-1}}+\left\|\nabla G(v)\right\|_{L^2_\ell}\,\leq\,\left\|G(v)\right\|_{H^1_\ell}$ and
Lemma \ref{l:weight-m=1}. This estimate completes the proof of the first inequality, for $L^2$-type norms of $\de G$.

Now, we focus on $H^1_{\ell-1}$ bounds for $\de G$. Thanks to Lemma \ref{l:weight-m=1}, we have
\begin{equation} \label{est:dG_H^1_1}
\left\|\de G(t)\right\|_{H^1_{\ell-1}}\,\leq\,C\,\exp\left(C\int^t_0\left\|\nabla a(\tau,x;u)\right\|_{C^1_b}\,d\tau\right)\,
\int^t_0\left\|\div\big(\oline{a}(\tau,x;u-v)\,G(v)\big)\right\|_{H^1_{\ell-1}}\,d\tau\,.
\end{equation}
By definition, we have that $\|f\|_{H^1_{\ell-1}}\,=\,\|f\|_{L^2_{\ell-1}}+\|\nabla f\|_{L^2_{\ell-1}}$. Then, we start with the bound
\begin{align}
\left\|\div\big(\oline a(\tau,x;u-v)\,G(v)\big)\right\|_{L^2_{\ell-1}}\,&=\,\left\|\div\oline a\;G(v)\right\|_{L^2_{\ell-1}}\,+\, 
\left\|\oline a\cdot\nabla G(v)\right\|_{L^2_{\ell-1}} \label{est:dG_p1} \\
&\leq\,C\,|u(\tau)-v(\tau)|\,\left(\left\|G(v)\right\|_{L^2_{\ell-1}}\,+\,\left\|\nabla G(v)\right\|_{L^2_{\ell}}\right) \nonumber \\
&\leq\,C\,|u(\tau)-v(\tau)|\,\left\|G(v)\right\|_{H^1_\ell}\,. \nonumber
\end{align}
Next, we need to bound in $L^2_{\ell-1}$ the quantity $\nabla\div\big(\oline a(\tau,x;u-v)\,G(v)\big)$: we have then to control four terms.
First of all, we notice that $\nabla\div\oline a\equiv0$. Moreover, we can write
\begin{align}  \label{est:dG_p2}
\left\|\div\oline a\;\nabla G(v)\right\|_{L^2_{\ell-1}}\,&\leq\,|u(\tau)-v(\tau)|\,\left\|\nabla G(v)\right\|_{L^2_{\ell-1}}\,,
\end{align}
and the same estimate holds true also for the term $\nabla\oline a\cdot\nabla G(v)$. Finally, we have
\begin{align}
\left\|\oline a\cdot\nabla^2G(v)\right\|_{L^2_{\ell-1}}\,&\leq\,C\,|u(\tau)-v(\tau)|\,\left\|\,\nabla^2G(v)\right\|_{L^2_\ell}\,. \label{est:dG_p3}
\end{align} 

Putting \eqref{est:dG_p1}, \eqref{est:dG_p2} and \eqref{est:dG_p3} together, we infer the bound
\begin{align*}
\left\|\div\big(\oline{a}(\tau,x;u-v)\,G(v)\big)\right\|_{H^1_{\ell-1}}\,&\leq\,C\,|u(\tau)\,-\,v(\tau)|\,\left\|G(v)\right\|_{H^2_\ell}\,.
\end{align*}
Inserting this last inequality into \eqref{est:dG_H^1_1} and using the bounds of Theorem \ref{th:weight}, we finally get the claimed estimate for the $H^1$-type norms of $\de G$.
\end{proof}

\subsection{Differentiability of the control-to-state map} \label{ss:G-diff}
 
 %Alfio: subsection spell - would be better sectio
In this section, we investigate differentiability properties of the control-to-state map $G$, defined above. 

Now, with Lemma \ref{l:Lip} at hand, we can prove G\^ateaux differentiability of $G$.
For any given $u$ in an open set $U_0\subset U_{ad}$, let $G(u)$ be the corresponding solution to the Liouville equation, as defined in Proposition \ref{p:G}, and let $\delta u=(\delta u_1,\delta u_2)$ be an admissible 
variation of $u$, such that $u+\veps \delta u \in U_{ad}$ for $\veps \in \R\setminus\{0\}$ sufficiently small. 
Then the G\^{a}teaux derivative of $G$ with respect to the variation $\delta u$ at $u$ is defined as the limit (whenever such a limit exists)
\begin{equation} \label{def:Gateaux}
\delta_{\delta u}G(u)\,:=\,\lim_{\veps \to 0} \frac{G(u+ \veps \delta u)\,-\,G(u)}{\veps}\,.
\end{equation}

The next proposition holds true.
\begin{prop} \label{p:Gateaux}
Let $m\geq2$ and $k\geq2$. Let the data $\rho_0$, $g$ and $a_0$ be fixed as in Remark \ref{r:data} above. Let $u$ belong to ${\rm int\,}U_{ad}$, where ${\rm int\,}U_{ad}$
denotes the interior part of the set $U_{ad}$.

Then, for any admissible variation $\de u$ of $u$, the limit \eqref{def:Gateaux} exists in $L^\infty_T(L^2)$. In particular, the control-to-state map $G$ is G\^ateaux differentiable
at $u$. Moreover, $\delta_{\delta u}G$ satisfies the Liouville problem 
\begin{equation}
\d_t\delta_{\delta u}G \, + \,  \div \bigl( a(t,x;u) \, \delta_{\delta u}G \bigr)\, = \, -\,\div\bigl( \oline a(t,x;\delta u) \, G(u) \bigr)\,,\qquad\qquad
\mbox{ with }\quad \delta_{\delta u} G_{|t=0}\,=\, 0\,,
\label{LiouvilleGateaux}
\end{equation}
where we have defined $\oline a(t,x;\delta u)\,:=\,\delta u_1 + x \circ \delta u_2$.
\end{prop}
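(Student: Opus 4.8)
The plan is to produce the G\^ateaux derivative explicitly as the solution $w$ of the linearized Liouville problem \eqref{LiouvilleGateaux}, and then to prove that the difference quotients appearing in \eqref{def:Gateaux} converge to $w$ in the $L^\infty_T(L^2)$ topology, by a stability argument resting on the quantitative Lipschitz bounds of Lemma \ref{l:Lip}. \emph{Step 1 (the candidate).} Write $\oline a(t,x;\delta u):=\delta u_1(t)+x\circ\delta u_2(t)$ and $\rho:=G(u)\in C_T(H^m_k)$. Since $m\geq2$ and $k\geq2$, the source term $-\div\bigl(\oline a(t,x;\delta u)\,\rho\bigr)=-(\div\oline a)\,\rho-\oline a\cdot\nabla\rho$ belongs to $L^1_T(L^2)$: indeed $\div\oline a$ depends on $t$ only, while $\oline a$ has at most linear growth in $x$ and $\nabla\rho\in C_T(H^{m-1}_k)$ with $k\geq1$. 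The drift $a(t,x;u)$ has the form \eqref{controlmechanism} and hence satisfies hypotheses \eqref{hyp:data}, so Theorem \ref{thm:existenceUnboudedA} (equivalently Theorem \ref{th:weight}) yields a unique $w\in C_T(L^2)\subset L^\infty_T(L^2)$ solving \eqref{LiouvilleGateaux}.

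\emph{Step 2 (the equation for the error).} For $\varepsilon\in\R\setminus\{0\}$ small enough that $u+\varepsilon\delta u\in U_{ad}$, set $\rho^\varepsilon:=G(u+\varepsilon\delta u)$ and $\delta G^\varepsilon:=\varepsilon^{-1}(\rho^\varepsilon-\rho)$. Using the identity $a(t,x;u+\varepsilon\delta u)=a(t,x;u)+\varepsilon\,\oline a(t,x;\delta u)$, subtract the Liouville equations for $\rho^\varepsilon$ and $\rho$ and divide by $\varepsilon$ to get
\[
\d_t\delta G^\varepsilon+\div\bigl(a(t,x;u)\,\delta G^\varepsilon\bigr)\,=\,-\div\bigl(\oline a(t,x;\delta u)\,\rho^\varepsilon\bigr)\,,\qquad \delta G^\varepsilon_{|t=0}=0\,.
\]
Setting $z_\varepsilon:=\delta G^\varepsilon-w$ and subtracting the equation of Step 1 gives the clean relation
\[
\d_t z_\varepsilon+\div\bigl(a(t,x;u)\,z_\varepsilon\bigr)\,=\,-\div\bigl(\oline a(t,x;\delta u)\,(\rho^\varepsilon-\rho)\bigr)\,,\qquad z_{\varepsilon|t=0}=0\,.
\]

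\emph{Step 3 (the estimate).} Apply the $L^2$ energy estimate of Theorem \ref{thm:existenceUnboudedA} (case $m=0$) to the equation for $z_\varepsilon$, and then the Leibniz rule together with the linear growth of $\oline a$, to obtain
\[
\standardNorm{z_\varepsilon(t)}_{L^2}\,\leq\,C\,\exp\Bigl(C\!\int_0^T\!\standardNorm{\nabla a(\tau,x;u)}_{L^\infty}\,d\tau\Bigr)\int_0^t C\,|\delta u(\tau)|\;\standardNorm{(\rho^\varepsilon-\rho)(\tau)}_{H^1_1}\,d\tau\,.
\]
Now invoke Lemma \ref{l:Lip} with the pair of controls $u+\varepsilon\delta u,\ u\in U_{ad}$ and $\ell=2$ (licit since $k\geq2$, and it is precisely here that $m\geq2$ is used): since $\rho^\varepsilon-\rho=\varepsilon\,\delta G^\varepsilon$ with $\delta G^\varepsilon=\varepsilon^{-1}\bigl(G(u+\varepsilon\delta u)-G(u)\bigr)$, we get $\standardNorm{(\rho^\varepsilon-\rho)(\tau)}_{H^1_1}\leq|\varepsilon|\,K_1^{(2)}\,\standardNorm{\delta u}_{\LL^1_T}$, where the constant $K_1^{(2)}$ of \eqref{def:K_1} is bounded by some $\widetilde K$ independent of $\varepsilon$ for $|\varepsilon|\leq1$ (because $\standardNorm{u+\varepsilon\delta u}_{\LL^1_T}\leq\standardNorm{u}_{\LL^1_T}+\standardNorm{\delta u}_{\LL^1_T}$ and the remaining data are fixed). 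Combining the two displays yields $\standardNorm{z_\varepsilon}_{L^\infty_T(L^2)}\leq C\,|\varepsilon|$, with $C$ depending only on the fixed data, on $u$ and on $\delta u$; hence $z_\varepsilon\to0$, i.e. $\delta G^\varepsilon\to w$ in $L^\infty_T(L^2)$ as $\varepsilon\to0$. This is exactly the statement that the limit \eqref{def:Gateaux} exists in $L^\infty_T(L^2)$ and equals $w$, which by Step 1 solves \eqref{LiouvilleGateaux}.

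\emph{Main obstacle.} The only delicate point is the hyperbolic loss of regularity: closing the $L^2$ estimate for $z_\varepsilon$ forces controlling $\rho^\varepsilon-\rho$ in $H^1_1$ (one extra derivative and one weight power), which via Lemma \ref{l:Lip} is available exactly when the data satisfy $m\geq2$ and $k\geq2$. Everything else is routine energy-estimate bookkeeping, the only care being the $\varepsilon$-uniformity of the constants, which holds because $u$ lies in the interior of $U_{ad}$ and $\delta u$ is an admissible variation.
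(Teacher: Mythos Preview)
Your proof is correct and follows essentially the same approach as the paper: derive the equation for the error $z_\varepsilon=\delta G^\varepsilon-w$, apply the $L^2$ energy estimate of Theorem \ref{thm:existenceUnboudedA}, and close via the second inequality of Lemma \ref{l:Lip} (with $\ell=2$), which is where the hypotheses $m\geq2$, $k\geq2$ enter. The only cosmetic difference is that the paper first identifies the limit through a weak-$*$ compactness argument (extracting a subsequence and passing to the limit in the weak formulation) before turning to the strong convergence estimate, whereas you construct the candidate $w$ directly from the well-posedness theory; your route is slightly more economical, since the paper itself remarks that the weak-$*$ step does not yet give G\^ateaux differentiability and then proceeds exactly as you do.
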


\begin{proof}
For any $0<|\veps|<1$ small enough, let us define
$$
\de G^\veps\,:=\,\frac{1}{\veps}\,\big(G(u+ \veps \delta u)\,-\,G(u)\big)\,.
$$
Following the computations which led to \eqref{eq:dG}, we deduce that $\de G^\veps$ solves the equation
\begin{equation} \label{eq:de-G^a}
\d_t\de G^\veps\,+\,\div\big(a(t,x;u)\,\de G^\veps\big)\,=\,-\,\div\big(\oline a(t,x;\de u)\,G(u+\veps\de u)\big)\,,
\end{equation}
with initial datum $\de G^\veps_{|t=0}=0$.

%Alfio: observe replaced with notice
Notice that, by the first part of Lemma \ref{l:Lip}, we can find that, whenever $m\geq1$ and $k\geq1$, the sequence $\big(\de G^\veps\big)_\veps$ is uniformly bounded in $L^\infty_T(L^2)$.
Then, we can extract a subsequence, which converges weakly-$*$ in this space to some $\rho\in L^\infty_T(L^2)$. Then, by weak compactness methods (as the ones used in the proof
to Proposition \ref{p:ww-G}), we can pass to the limit in the weak formulation of the previous equation and gather that $\rho$ solves the problem
\begin{equation} \label{eq:Gat-rho}
\d_t\rho\,+\,\div\big(a(t,x;u)\,\rho\big)\,=\,-\,\div\big(\oline a(t,x;\de u)\,G(u)\big)
\end{equation}
with initial datum $\rho_{|t=0}=0$. Notice that the right-hand side of the previous equation belongs to $L^2$ by our assumptions on the initial data. Then, by uniqueness the whole
sequence $\big(\de G^\veps\big)_\veps$ has to weakly-$*$ converge to $\rho$, and $\rho$ has to coincide with $\de_{\de u}G$.

Unfortunately, the  previous argument does not prove the G\^ateaux differentiability of $G$, because we need that the limit exists in the strong topology, namely in the $L^\infty_T(L^2)$ norm.
In order to get this property, let us write the equation for $\de G^\veps-\rho$: we find
$$
\d_t\big(\de G^\veps-\rho\big)\,+\,\div\Big(a(t,x;u)\,\big(\de G^\veps-\rho\big)\Big)\,=\,-\,\div\Big(\oline a(t,x;\de u)\,\big(G(u+\veps\de u)-G(u)\big)\Big)\,,
$$
with zero initial datum. For notational simplicity, define $\rho^\veps\,:=\,\de G^\veps-\rho$; notice also that $G(u+\veps\de u)-G(u)\,=\,\veps\,\de G^\veps$.
Then, an energy estimate for that equation gives us
\begin{align*}
\left\|\rho^\veps(t)\right\|_{L^2}\,&\leq\,C\,\exp\left(C\int^t_0\|\div a(\tau,x;u)\|_{L^\infty}\right)\,
\int^t_0\left\|\div\Big(\oline a(\tau,x;\de u)\,\big(G(u+\veps\de u)-G(u)\big)\Big)\right\|_{L^2}\,d\tau \\
&\leq\,C\,\veps\,\exp\left(C\int^t_0\|\div a(\tau,x;u)\|_{L^\infty}\right)\,
\int^t_0|\de u(\tau)|\,\left(\left\|\de G^\veps\right\|_{L^2}\,+\,\left\|\big(1+|x|\big)\,\nabla\de G^\veps\right\|_{L^2}\right)\,d\tau \\
&\leq\,C\,\veps\,\exp\left(C\int^t_0\|\div a(\tau,x;u)\|_{L^\infty}\right)\,\int^t_0|\de u(\tau)|\,\left\|\de G^\veps\right\|_{H^1_1}\,d\tau\,,
\end{align*}
where we have argued as in the first line of \eqref{est:dG_L^2_part} in order to pass from the first inequality to the second one. At this point, applying the second estimate
of Lemma \ref{l:Lip} to equation \eqref{eq:de-G^a} yields
$$
\left\|\de G^\veps(\tau)\right\|_{H^1_1}\,\leq\,C_0\,\int^\tau_0|\delta u(s)|\,ds\,,
$$
for any $\tau\in[0,t]$, $t\leq T$, for a fixed constant $C_0$ (depending on $T$, $u^a$, $u^b$, and $\|\nabla a_0\|_{L^1_T(C^2_b)}$, $\|\rho_0\|_{H^2_2}$ and $\|g\|_{L^1_T(H^2_2)}$).
Putting this bound in the previous estimate entails
\begin{align*}
\left\|\rho^\veps(t)\right\|_{L^2}\,&\leq\,C\,C_0\,\veps\,\exp\left(C\int^t_0\|\div a(\tau,x;u)\|_{L^\infty}\right)\,\left(\int^t_0|\de u(\tau)|\,d\tau\right)^2 \\
&\leq\,C\,C_0\,\veps\,T^2\,\|\delta u\|^2_{L^\infty_T}\,\exp\left(C\int^T_0\|\div a(t,x;u)\|_{L^\infty}\right)\,.
\end{align*}
From this last estimate, we deduce that, in the limit for $\veps\ra0$, $\rho^\veps\,\longrightarrow\,0$ in $L^\infty_T(L^2)$. This completes the proof of the proposition.
\end{proof}

Next, we tackle the proof of the Fr\'echet differentiability of $G$. The arguments will follow the proof of Proposition \ref{p:Gateaux}.
\begin{theorem} \label{thm:G-Frechet}
Let $m\geq2$ and $k\geq2$. Let the data $\rho_0$, $g$ and $a_0$ be fixed as in Remark \ref{r:data} above, and let $u\in {\rm int\,}U_{ad}$.
Define $DG(u)[\delta u]$ to be the unique solution to equation \eqref{LiouvilleGateaux}.

Then there exists a constant $C>0$ (depending only on $T$, $u^a$, $u^b$, and $\|\nabla a_0\|_{L^1_T(C^2_b)}$, $\|\rho_0\|_{H^2_2}$ and $\|g\|_{L^1_T(H^2_2)}$) such that
\begin{align*}
\Big\|G(u+\delta u)\,-\,G(u)\,-\,DG(u)[\delta u]\Big\|_{L^\infty_T(L^2)}\,\leq\,C\,\standardNorm{\delta u}^2_{L^\infty_T}\,.
\end{align*}
In particular, the map $G$ is Fr\'echet differentiable from ${\rm int\,}U_{ad}$ into $L^\infty_T(L^2)$, and its Fr\'echet differential at any point $u\in {\rm int\,}U_{ad}$ is given by $DG(u)$.
\end{theorem}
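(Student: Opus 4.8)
The plan is to rerun, essentially verbatim, the second part of the proof of Proposition~\ref{p:Gateaux}, working directly with the increment $\delta u$ in place of the infinitesimal parameter $\veps$. Since $u\in{\rm int\,}U_{ad}$, the state $G(u+\delta u)$ is well defined as soon as $\|\delta u\|_{L^\infty_T}$ is small enough that $u+\delta u\in U_{ad}$, and we assume this throughout. Set $\delta G:=G(u+\delta u)-G(u)$ and $w:=G(u+\delta u)-G(u)-DG(u)[\delta u]$. Arguing exactly as in the derivation of \eqref{eq:dG} (with $v=u+\delta u$), linearity of the Liouville equation gives that $\delta G$ solves $\d_t\delta G+\div(a(t,x;u)\,\delta G)=-\div(\oline a(t,x;\delta u)\,G(u+\delta u))$ with $\delta G_{|t=0}=0$, where $\oline a(t,x;\delta u)=\delta u_1+x\circ\delta u_2$. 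Subtracting equation \eqref{LiouvilleGateaux} satisfied by $DG(u)[\delta u]$, one finds that $w$ solves
\begin{equation*}
\d_t w+\div\bigl(a(t,x;u)\,w\bigr)=-\,\div\bigl(\oline a(t,x;\delta u)\,\delta G\bigr)\,,\qquad w_{|t=0}=0\,.
\end{equation*}

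Next I would apply the $L^2$ energy estimate of Theorem~\ref{thm:existenceUnboudedA} (case $m=0$) to this equation. Since $a(\cdot;u)$ has the form \eqref{controlmechanism}, the $L^\infty$-norm of $\nabla a(\cdot;u)$ is bounded in $L^1_T$ uniformly over $u\in U_{ad}$, so the exponential factor is controlled by a constant depending only on $T$, $u^a$, $u^b$ and $\|\nabla a_0\|_{L^1_T(C^0_b)}$, whence
\begin{equation*}
\left\|w(t)\right\|_{L^2}\leq C\int_0^t\left\|\div\bigl(\oline a(\tau,x;\delta u)\,\delta G(\tau)\bigr)\right\|_{L^2}\,d\tau\,.
\end{equation*}
The source term is bounded exactly as in the first line of \eqref{est:dG_L^2_part}: writing $\div(\oline a\,\delta G)=(\div\oline a)\,\delta G+\oline a\cdot\nabla\delta G$ and using $|\div\oline a(\tau,x;\delta u)|\leq C|\delta u(\tau)|$ together with $|\oline a(\tau,x;\delta u)|\leq C|\delta u(\tau)|(1+|x|)$, it is at most $C|\delta u(\tau)|\,\|\delta G(\tau)\|_{H^1_1}$.

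It then only remains to control $\|\delta G(\tau)\|_{H^1_1}$, and this is exactly the second estimate of Lemma~\ref{l:Lip} applied with $\ell=2$ — which is precisely the place where the standing assumptions $m\geq2$, $k\geq2$ enter, since that estimate relies on $G(u),G(u+\delta u)\in H^2_2$. Because $u$ and $u+\delta u$ both belong to $U_{ad}$, the constant $K_1^{(2)}$ of \eqref{def:K_1} is bounded by a constant depending only on $T$, $u^a$, $u^b$, $\|\nabla a_0\|_{L^1_T(C^2_b)}$, $\|\rho_0\|_{H^2_2}$ and $\|g\|_{L^1_T(H^2_2)}$, and Lemma~\ref{l:Lip} gives $\|\delta G(\tau)\|_{H^1_1}\leq K_1^{(2)}\int_0^\tau|\delta u(s)|\,ds\leq K_1^{(2)}\,T\,\|\delta u\|_{L^\infty_T}$. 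Plugging this into the two displays above yields
\begin{equation*}
\left\|w(t)\right\|_{L^2}\leq C\,K_1^{(2)}\,T\,\|\delta u\|_{L^\infty_T}\int_0^t|\delta u(\tau)|\,d\tau\leq C\,K_1^{(2)}\,T^2\,\|\delta u\|_{L^\infty_T}^2
\end{equation*}
uniformly in $t\in[0,T]$, which is the asserted remainder bound. Since $\delta u\mapsto DG(u)[\delta u]$ is linear by construction and bounded from $\LL^\infty_T$ into $L^\infty_T(L^2)$ (again by the $L^2$ estimate for \eqref{LiouvilleGateaux} together with Lemma~\ref{l:Lip}), this inequality is exactly the statement that $G$ is Fr\'echet differentiable at $u$, with Fr\'echet differential $DG(u)$.

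I do not expect a serious obstacle: the whole difficulty has been pushed into Lemma~\ref{l:Lip} and Theorem~\ref{th:weight}. The only delicate feature is the \emph{double loss of regularity} — bounding $w$ in $L^\infty_T(L^2)$ forces bounding $\delta G$ in $H^1_1$, and bounding $\delta G$ in $H^1_1$ via Lemma~\ref{l:Lip} in turn forces $H^2_2$ control of the states — and this loss has already been absorbed into the hypotheses and into those earlier results, so what remains is a short, increment-by-increment rerun of the Gâteaux computation.
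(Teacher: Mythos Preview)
Your proposal is correct and follows essentially the same approach as the paper's own proof: both derive the Liouville equation satisfied by the remainder $w=G(u+\delta u)-G(u)-DG(u)[\delta u]$ with source $-\div(\oline a(\delta u)\,\delta G)$, apply the $L^2$ energy estimate, bound the source via $|\delta u|\,\|\delta G\|_{H^1_1}$, and then invoke the second inequality of Lemma~\ref{l:Lip} (with $\ell=2$) to close. Your write-up is slightly more explicit about where the hypotheses $m\geq2$, $k\geq2$ enter and about linearity/boundedness of $DG(u)$, but the argument is the same.
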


\begin{proof}
In order to prove that $G$ is Fr\'echet differentiable, with Fr\'echet differential given by $DG(u)[\de u]$, we have to show that
\begin{align*}
\lim_{\standardNorm{\delta u}_{L^\infty_T} \rightarrow 0} \frac{\Big\|G(u+\delta u) - G(u) - DG(u)[ \delta u]\Big\|_{L^\infty_T(L^2)}}{\standardNorm{\delta u}_{L^\infty_T}}\,=\,0\,.
\end{align*}
We recall also that, if $G$ is Fr\'echet differentiable at $u$, then it is also G\^{a}teaux differentiable at the same point, and one has $\delta_{\delta u}G =DG(u)[ \delta u]$.

For simplicity, let us introduce the notation
$$
\mc G_u(\de u)\,:=\,G(u+\delta u) - G(u) - DG(u)[ \delta u]\,.
$$
Remark that, in the proof of Proposition \ref{p:Gateaux} above, we have already called $\rho$ the solution to equation \eqref{LiouvilleGateaux}, keep in mind equation \eqref{eq:Gat-rho}.
Therefore, the same computations performed on $\rho^\alpha\,=\,\de G^\alpha-\rho$ this time lead us to an equation for  $\mc G_u(\de u)$:
\begin{align*}
\d_t\mc G_u(\de u)\,+\,\div\big(a(t,x;u)\,\mc G_u(\de u)\big)\,=\,-\,\div\Bigl( \oline a(t,x;\delta u) \,\big( G(u+\delta u)-G(u) \big) \Big)\,,
\end{align*}
with initial datum $\mc G_u(\de u)_{|t=0}=0$.

Next, it is just a matter of repeating the estimates performed on $\rho^\alpha$: we easily find, for every $t\in[0,T]$, the inequality
\begin{align*}
\left\|\mc G_u(\de u)(t)\right\|_{L^2}\,&\leq\,C\,\exp\left(C\int^t_0\|\div a(\tau,x;u)\|_{L^\infty}\right)\,
\int^t_0\left\|\div\Big(\oline a(\tau,x;\de u)\,\big(G(u+\de u)-G(u)\big)\Big)\right\|_{L^2}\,d\tau \\
&\leq\,C\,\exp\left(C\int^t_0\|\div a(\tau,x;u)\|_{L^\infty}\right)\,\int^t_0|\de u(\tau)|\,\left\|G(u+\de u)-G(u)\right\|_{H^1_1}\,d\tau\,.
\end{align*}
Moreover, owing to the second inequality of Lemma \ref{l:Lip}, we get
$$
\left\|G(u+\de u)-G(u)\right\|_{H^1_1}\,\leq\,C_0\,\int^\tau_0|\delta u(s)|\,ds\,,
$$
for a constant $C_0$ which depends, as before, only on $T$, $u^a$, $u^b$, $\|\nabla a_0\|_{L^1_T(C^2_b)}$, $\|\rho_0\|_{H^2_2}$ and $\|g\|_{L^1_T(H^2_2)}$. Inserting
this relation in the previous estimate, we find
\begin{align*}
\left\|\mc G_u(\de u)(t)\right\|_{L^2}\,&\leq\,C\,C_0\,\exp\left(C\int^t_0\|\div a(\tau,x;u)\|_{L^\infty}\right)\,\left(\int^t_0|\de u(\tau)|\,d\tau\right)^2\,\leq\,K\,\|\de u\|^2_{L^\infty_T}\,,
\end{align*}
for a new positive constant $K$. From this last inequality, the claims of the theorem follow.
\end{proof}

\section{Analysis of the Liouville optimal control problem} \label{s:ocp}
%Alfio: small changes in text: 
In this section, we investigate our Liouville ensemble optimal control problem. In the first part, after recalling the problem's setting, we prove the existence of optimal controls by means
of classical arguments. However, notice that one has to carefully justify that the reduced functional  $\what J$ (see its definition below) is weakly lower semi-continuous. 
In fact, this property is not obvious, since $\rho=G(u)$ depends non-linearly on $u$. After that, in Section \ref{sec-OptimalitySystem} we characterise optimal controls as solutions
of a related first-order optimality system. In Section \ref{ss:unique} we discuss  uniqueness of optimal controls. 

\subsection{Existence of optimal controls} \label{sec-existencecontrol}

In this section, we deal with existence of optimal solutions to an \emph{ensemble optimal control problem}. Our analysis is based on the following assumptions.

\begin{itemize}
 \item[\textbf{(A.1)}] We fix $(m,k)\in\N^2$, %%%such that $m\geq 2$ and $k\geq2$ (\fra{such that $2\leq m\leq k$???}),
and we take an initial datum $\rho_0\in H^m_k(\R^d)$, a force
$g\in L^1\big([0,T];H^m_k(\R^d)\big)$ and a vector field $a_0\in L^1\big([0,T];C^{m+1}(\R^d)\big)$, with $\nabla a_0\in L^1\big([0,T];C^{m}_b(\R^d)\big)$.

\item[\textbf{(A.2)}] We fix parameters $(\g,\de,\nu)\in\R^3$ such that $\g>0$, $\de\geq0$ and $\nu\geq0$. %%%, with $\de+\nu>0$. \fra{Is the last condition necessary?}

\item[\textbf{(A.3)}] Chosen $u^a=\big(u^a_1,u^a_2\big)$ and $u^b=\big(u^b_1,u^b_2\big)$ in $\R^{2d}$, with $u^a\leq u^b$, we define the set of admissible controls to be
\begin{align}
U_{ad}\,&:=\,\left\{ u \,\in\, \LL^\infty_T(\RR^d)\;\bigl|\quad u^a\,\leq\,u(t)\,\leq\,u^b \qquad\mbox{ for a.e. }\; t\,\in\,[0,T]\right\}\qquad\mbox{ if }\quad \nu=0 \label{def:U_0} \\
U_{ad}\,&:=\,\left\{ u \,\in\, \HH^1_T(\RR^d)\;\bigl|\quad u^a\,\leq\,u(t)\,\leq\,u^b \qquad\mbox{ for all }\; t\,\in\,[0,T]\right\}\qquad\mbox{ if }\quad \nu>0\,. \label{def:U_nu}
\end{align}

\item[\textbf{(A.4)}] Finally, we take two 
attracting potentials $\theta$ and $\varphi$ in $L^2(\R^d)$, in the sense 
specified in Section \ref{sec-formulation}.
\end{itemize}

\begin{remark} \label{r:A4}
We point out that assumption \tbf{(A.4)} (which will be strengthened in Section \ref{ss:unique} for getting uniqueness, see condition \tbf{(A.4)*} there) is taken for simplicity of presentation,
since more general $\theta$ and $\vphi$ can be considered in our framework. For instance, we can allow for $\theta$ to depend on time: $\theta\,\in\,L^1_T(L^2)$, or $\theta\,\in\,L^1_T(H^1_1)$ in
\tbf{(A.4)*} below. 
The case $\theta(x)=|x|^2$ and $\varphi(x)=|x|^2$ is more delicate, and will be matter of further discussions in Section \ref{ss:confining}.
\end{remark}

Now, consider our cost functional given by 
\begin{align}
J(\rho,u)\,&:=\,\int_0^T\int_{\R^d}\theta(x)\,\rho(x,t)\,dx\,dt\, +\, \int_{\R^d} \varphi(x) \, \rho(x,T) \, dx \label{JfuncEnsemble} \\
&\qquad\qquad
+\frac{\gamma}{2}\,\int_0^T\big|u(t)\big|^2\,dt\,+\,\delta \,\int_0^T\big|u(t)\big|\,dt\,+\,\frac{\nu}{2}\,\int_0^T\left| \frac{d}{d t} u(t)\right|^2 \,dt\,. \nonumber
\end{align}
Remark that $J$ is well-defined whenever $u\in\LL^2_T$ if $\nu=0$, or $u\in\HH^1_T$ if $\nu>0$, and $\rho\in C\big([0,T];L^2(\R^d)\big)$.

Our ensemble optimal control problem requires to find
\begin{equation} \label{eq:min-J}
\min_{u \in U_{ad}} J(\rho,u)\,,
\end{equation}
subject to the differential constraint
\begin{equation} \label{LiouvilleEqEnsemble}
\left\{\begin{array}{ll}
       \d_t\rho\,+\,\div\bigl( a(t,x;u) \, \rho \bigr)\, =\,g \qquad\qquad & \mbox{ in } \quad [0,T]\times\R^d \\[1ex]
       \rho_{|t=0}\,=\, \rho_0 \qquad\qquad & \mbox{ on } \quad \R^d\,,
       \end{array}
\right.
\end{equation}
where the drift function $a(t,x;u)$ is defined as
\begin{equation} \label{def:a}
a(t,x;u)\,:=\,a_0(t,x)\,+\,u_1(t)\,+\, x \circ u_2(t)\,.
\end{equation}

Under our assumptions, Theorem \ref{th:weight} applies. Thus, for every $u\in U_{ad}$, there exists a unique solution solution $\rho \in C\big([0,T];H^{m}_k(\RR^d)\big)$ to the Liouville problem
\eqref{LiouvilleEqEnsemble} corresponding to that $u$. Therefore, resorting to the control-to-state map $G$, as defined in Section \ref{s:control-map},
we can introduce the so-called \emph{reduced cost functional}, given by 
\begin{equation} \label{def:J-red}
\what{J}(u)\,:=\,J\big(G(u),u\big)\,.
\end{equation}
Hence, the ensemble optimal control problem \eqref{eq:min-J}-\eqref{LiouvilleEqEnsemble} can be rephrased as follows:
\begin{equation} \label{minJred}
\min_{u \in U_{ad}}\what{J}(u)\,.
\end{equation}

\begin{remark} \label{r:J-red}
Recall that we have defined $G$ with values in $L^\infty_T(L^2)$. However, under our assumptions, we know that the solution to the Liouville equation actually belongs to
$C_T(L^2)$, so that the $\varphi$-term in \eqref{JfuncEnsemble} is well-defined, and 
thus so is $\what J$.
\end{remark}

In the following, we prove existence of a minimizer to  \eqref{minJred}.
\begin{theorem} \label{thm:existenceOptimalSolutions}
Under assumptions \tbf{(A.1)}-\tbf{(A.2)}-\tbf{(A.3)}-\tbf{(A.4)}, the ensemble optimal control problem \eqref{minJred} admits at least one solution 
$u^*\,\in\,U_{ad}$. The corresponding state $\rho^*\,:=\,G(u^*)$ belongs to the space $C\big([0,T];H^m_k(\R^d)\big)$.
\end{theorem}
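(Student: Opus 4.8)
The plan is to use the direct method of calculus of variations. First I would show that the reduced functional $\what J$ is bounded from below on $U_{ad}$: the attracting-potential terms are bounded because $\theta,\varphi\in L^2$ and $\rho=G(u)$ is bounded in $C_T(L^2)$ uniformly for $u\in U_{ad}$ (thanks to the energy estimate of Theorem~\ref{th:weight} and the uniform $L^\infty_T$ bound on admissible controls, which makes $\|\nabla a\|_{C^m_b}$ controlled), while the control cost terms are non-negative. Hence the infimum $\mu:=\inf_{u\in U_{ad}}\what J(u)$ is finite, and we may take a minimising sequence $(u^l)_l\subset U_{ad}$ with $\what J(u^l)\to\mu$.

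Next I would extract a weakly convergent subsequence. In the case $\nu=0$, the constraint $u^a\le u^l(t)\le u^b$ gives a uniform $\LL^\infty_T$ bound, so up to a subsequence $u^l\overset{*}{\rightharpoonup}u^*$ in $\LL^\infty_T$; one checks $u^*\in U_{ad}$ since the box constraints are preserved under weak-$*$ limits (the constraint set is convex and closed). In the case $\nu>0$, the term $\frac{\nu}{2}\int_0^T|\frac{d}{dt}u|^2$ being bounded along the minimising sequence, together with the $L^\infty$ bound, gives a uniform $\HH^1_T$ bound, hence $u^l\rightharpoonup u^*$ weakly in $\HH^1_T$, and by compact embedding $\HH^1_T\hookrightarrow C([0,T];\R^{2d})$ the pointwise constraints pass to the limit, so again $u^*\in U_{ad}$. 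The corresponding states satisfy $\rho^l:=G(u^l)\overset{*}{\rightharpoonup}\rho^*$ in $L^\infty_T(L^2)$, and by Proposition~\ref{p:ww-G} (weak-weak continuity of $G$) we identify $\rho^*=G(u^*)$.

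Then I would prove weak lower semicontinuity of $\what J$ along the sequence. The control-cost part $u\mapsto\frac{\g}{2}\|u\|^2_{\LL^2_T}+\de\|u\|_{\LL^1_T}+\frac{\nu}{2}\|\frac{d}{dt}u\|^2_{\LL^2_T}$ is convex and (strongly) continuous on the relevant space, hence weakly lower semicontinuous; note here one must be slightly careful in the $\nu=0$ case to view the $L^2$-norm as weakly lower semicontinuous with respect to weak-$*$ $L^\infty$ convergence, which holds since $u^l\overset{*}{\rightharpoonup}u^*$ in $L^\infty_T$ implies $u^l\rightharpoonup u^*$ in $L^2_T$. For the state-dependent terms, $\int_0^T\int_{\R^d}\theta\,\rho^l\,dx\,dt\to\int_0^T\int_{\R^d}\theta\,\rho^*\,dx\,dt$ because $\theta\in L^1_T(L^2)$ (it is the scalar product of a fixed $L^1_T(L^2)$ element against $\rho^l\overset{*}{\rightharpoonup}\rho^*$ in $L^\infty_T(L^2)$); for the terminal term $\int_{\R^d}\varphi\,\rho^l(T)\,dx$ one needs convergence at the fixed time $t=T$, which follows from the uniform bound $(\partial_t\rho^l)_l\subset L^1_T(H^{-1}_{\rm loc})$ (read off the equation) combined with the uniform $C_T(L^2)$ bound: this gives, by an Aubin–Lions/Arzelà–Ascoli argument, $\rho^l(T)\rightharpoonup\rho^*(T)$ weakly in $L^2$ (or at least weakly in $L^2_{\rm loc}$ with the uniform $L^2$ bound giving the full weak limit), hence convergence of the $\varphi$-term. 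Putting these together, $\what J(u^*)\le\liminf_l\what J(u^l)=\mu$, so $u^*$ is a minimiser. Finally, since $u^*\in U_{ad}$, Theorem~\ref{th:weight} gives $\rho^*=G(u^*)\in C([0,T];H^m_k(\R^d))$, completing the proof.

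The main obstacle I anticipate is handling the terminal-time term $\int_{\R^d}\varphi\,\rho^l(T)$: weak-$*$ convergence in $L^\infty_T(L^2)$ alone does not control the trace at $t=T$, so one genuinely needs the time-compactness argument (uniform bound on $\partial_t\rho^l$ in a negative Sobolev space, then an Aubin–Lions-type extraction to get continuity-in-time of the limit and convergence of the time slices). A secondary subtlety, already flagged by the authors, is that $\rho^l=G(u^l)$ depends nonlinearly on $u^l$, so one cannot invoke convexity for the state terms and must rely on the continuity of $G$ established in Proposition~\ref{p:ww-G}; fortunately the $\theta$- and $\varphi$-terms are \emph{linear} in $\rho$, so once the correct mode of convergence of $\rho^l$ is secured, lower semicontinuity (in fact continuity) of these terms is immediate.
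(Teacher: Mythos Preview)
Your proof is correct and follows the same direct-method approach as the paper, using Proposition~\ref{p:ww-G} for the weak-weak continuity of $G$ and weak lower semicontinuity of the control costs. You are in fact more careful than the paper about the terminal-time term $\int_{\R^d}\varphi\,\rho^l(T)\,dx$: the paper simply asserts that this term is ``weakly continuous with respect to the $L^2$ topology'' without justifying why $\rho^l(T)\rightharpoonup\rho^*(T)$ in $L^2$, whereas your time-compactness argument (uniform bound on $\partial_t\rho^l$ in $L^1_T(H^{-1}_{\rm loc})$ plus Arzel\`a--Ascoli in $C_T(H^{-1}_{\rm loc})$, then upgrading to weak $L^2$ via the uniform $C_T(L^2)$ bound) correctly fills this gap.
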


\begin{proof}
Let us focus on the case $\nu=0$ for simplicity; the case $\nu>0$ follows from the same token.

As already mentioned, the functional $J$ given in \eqref{JfuncEnsemble} is well-defined for $(\rho,u)\,\in\,C_T(L^2)\times\LL^\infty_T$.
Now, we  remark that $U_{ad}$ is a bounded subset of $\LL^\infty_T$. On the other hand, owing to Theorem \ref{th:weight}, see especially
estimate \eqref{est:weight}, and the embedding $C_T(H^m_K)\hookrightarrow L^\infty_T(L^2)$, the map $G$ takes its values in a bounded set of $L^\infty_T(L^2)$.
It follows that $\what J$ is bounded; in particular, $\what J$ is a proper map, i.e. $\inf_{U_{ad}}\what J\,>\,-\infty$,
and $\what J$ is not identically equal to $+\infty$.

Next, we claim that $\what J$ is weakly lower semi-continuous. To prove this fact, it is enough to use the weak-weak continuity of $G$, as stated in Proposition \ref{p:ww-G},
and to remark that $J$ is weakly lower semi-continuous. Indeed, the last three terms in \eqref{JfuncEnsemble} are norms, so they are weakly lower semi-continuous. 
On the other hand, the first two terms are linear in $\rho$, and then they are weakly continuous with respect to the $L^\infty_T(L^2)$ and $L^2$ topologies, respectively.
Thus we immediately get that, if $\big(u_n\big)_n\subset U_{ad}$ is a sequence which converges weakly-$*$ to a $u\in U_{ad}$ in $L^\infty_T$, we have 
$$
\liminf_{n\ra+\infty}\what J(u_n)\,=\,\liminf_{n\ra+\infty}J\big(G(u_n),u_n\big)\,\geq\,J\big(G(u),u\big)\,=\,\what J(u)\,.
$$

At this point, proving the existence of a minimizer for $\what J$ is standard. Let us take a minimizing sequence $\big(u_n\big)_n\subset U_{ad}$.
Since $U_{ad}$ is a bounded set in $\LL^\infty_T$, we can extract a weakly-$*$ convergent subsequence, which we do not relabel for simplicity; let us call $u^*\in U_{ad}$ its limit-point.
Then, by the weak-lower semi-continuity of $\what J$, we can conclude that $u^*$ is a minimizer for $\what J$.
\end{proof}

We discuss uniqueness of the minimizers in Section \ref{ss:unique} below. 
For this purpose, we use characterization of minimizers as solutions to 
a suitable optimality system, which we derive in the next section.

\subsection{Liouville optimality systems} \label{sec-OptimalitySystem}

This section is devoted to the characterization of ensemble optimal controls as solutions of the related first-order optimality system.
For this purpose, in addition to hypotheses \tbf{(A.1)}-\tbf{(A.2)}-\tbf{(A.3)}-\tbf{(A.4)} stated above, from now on we take
$$
m\,\geq\,1 \qquad\qquad\mbox{ and }\qquad\qquad k\,\geq\,1\,.
$$

In correspondence to \eqref{JfuncEnsemble}-\eqref{eq:min-J}-\eqref{LiouvilleEqEnsemble}, we consider the Lagrange multipliers framework, see e.g. \cite{Lions1971,Troeltzsch2010},
and introduce the Lagrange functional $\LF$ as follows:
\begin{align} \label{def:LFunction}
\LF(\rho,u,q)\,:=\,J(\rho,u)+\int_0^T\!\!\!\SPI \Big( \d_t\rho+\div \big(a(x,t;u)\rho\big)-g\Big)q\,dxdt+\int_{\R^d}\big(\rho(0,x)-\rho_0(x)\big)q_0(x)\,dx\, , 
\end{align}
where, for the sake of generality, we have included a right-hand side $g$. 
The variable $q$ represents the Lagrange multiplier. Notice that $\LF$ is well-defined whenever $u\in\LL^\infty_T$ if $\nu=0$, $u\in\HH^1_T$ if $\nu>0$, $q\in L^\infty_T(L^2)$, $q_0\,\in\,L^2$
and $\rho\in C_T(L^2)$ such that both $\d_t\rho$ and $\div \big(a(x,t;u)\,\rho\big)$ belong to $L^1_T(L^2)$.
In particular, it is enough to have $\rho\,\in\,W^{1,1}_T(L^2)\,\cap\,L^\infty_T(H^1_1)$, recall also Proposition \ref{p:H^m_k}.
Notice that, \tsl{a posteriori}, we will find $q\in C_T(L^2)$ and $q_0=q(0)$; see the discussion below for details.

In order to derive the optimality system, let us discuss different instances.
For clarity, we first discuss the case with $L^2$ costs only, then the case with $L^2-H^1$ costs, and finally the case  with $L^2-L^1- H^1$ costs.

\paragraph{The case $\mbf{\de=\nu=0}$.}
If $\delta=0$, then $J$ is Fr\'echet differentiable over $C_T(L^2)\times {\rm int\,}U_{ad}$, since it is linear in $\rho$ and the 
control costs with $\gamma>0$, $\nu\geq 0$ are given by differentiable norms. 
It is then an easy computation to show that $\LF$ is Fr\'echet differentiable over the space
$$
\mbb{X}_T\;:=\;\left(W^{1,1}_T(L^2)\,\cap\,L^\infty_T(H^1_1)\right)\;\times\;\LL^2_T\;\times\;C_T(L^2)\,,
$$
where $\LL^2_T$ has to be replaced by $\HH^1_T$ in the case when $\nu>0$. The Fr\'echet differential of $\LF$ at $(\rho,u,q)$ is given by the linearization
of each of its terms at that point.

Now, consider in addition $\nu=0$. The \emph{optimality system} is obtained by putting to zero the Fr\'echet derivatives of $\LF(\rho,u,q)$ with respect to each of its arguments separately. We obtain 
\begin{align}
&\d_t\rho\,+\,\div\big(a(x,t;u)\,\rho\big)\,=\,g\,, \qquad\qquad \mbox{ with }\quad \rho_{|t=0}\,=\,\rho_0  \label{forwardEQ} \\
&-\,\d_tq\,-\,a(x,t;u)\cdot \nabla q\,=\,-\,\theta, \qquad\qquad \mbox{ with }\quad q_{|t=T}\,=\,-\,\varphi  \label{adjoint} \\
&\left( \gamma\,u^r_j\,+\,\int_{\RR^d}\,\div\left(\frac{\partial a}{\partial u^r_j}\,\rho\right)\, q\,dx\;,\;v^r_j\,-\,u^r_j \right)_{L^2(0,T)}\,\geq\,0\qquad
\forall v \in U_{ad}\,,\;j\,=\,1,2\,,\; r\,=\,1\ldots d\,. \label{reducedGradient}
\end{align}
We remark that, denoting by $e^r$ the $r$-th unit vector of the canonical basis of $\R^d$ and by $x^r$ the $r$-th component of the vector $x\in\R^d$, by Definition \ref{def:a} we have
$$
\frac{\partial a}{\partial u^r_j}(t,x;u)\,=\,e^r\qquad\mbox{ for }\;j=1\,,\qquad\qquad
\frac{\partial a}{\partial u^r_j}(t,x;u)\,=\,x^r\,e^r\qquad\mbox{ for }\;j=2\,.
$$
Then, equation \eqref{reducedGradient} can be equivalently written in the following form: for any $1\leq r\leq d$,
$$
\left\{\begin{array}{l}
        \left( \gamma\, u^r_1 \,+\,  \displaystyle{\int}_{\RR^d} \d_r\rho\,q\,dx\;,\;v^r_1\,-\,u^r_1 \right)_{L^2(0,T)}\,\geq\,0 \\[1ex]
        \left( \gamma\, u^r_2\,+\,\displaystyle{\int}_{\RR^d}\d_r\big(x^r\,\rho\big)\,q\,dx\;,\;v^r_2\,-\,u^r_2 \right)_{L^2(0,T)}\,\geq\,0\,.
       \end{array}
\right.
$$
Further, if we sum up equations \eqref{reducedGradient} for all $m$ and all $r$, we can write, in the following compact form
\begin{equation} \label{eq:e+x}
\left( \gamma\,u\,+\,\int_{\RR^d}\,\div\big((e+x)\,\rho\big)\, q\,dx\;,\;v\,-\,u \right)_{\LL^2_T}\,\geq\,0\qquad\qquad\mbox{ for all }\quad v\,\in\,U_{ad}\,,
\end{equation}
where we have defined the vector $e\,=\,(1\ldots1)$.

Equation \eqref{forwardEQ} is our Liouville model (also called the forward equation in this context). The results of Section \ref{ss:weight} guarantee that, under
our assumptions, there exists a unique solution $\rho\,\in\,C_T(H^1_1)$. Moreover, since $u\,\in\,U_{ad}$, an inspection of \eqref{forwardEQ} reveals
that $\d_t\rho\,\in\,L^1_T(L^2)$. 

Equation \eqref{adjoint} is the adjoint Liouville equation; it is obtained by taking the 
Fr\'echet derivative of \eqref{def:LFunction} with respect to $\rho$. This is a transport equation that evolves 
backwards in time. By setting $\wtilde{q}(t,x)\,=\,q(T-t,-x)$, we obtain a transport  problem for $\wtilde{q}$, as in \eqref{eq:transportProblem}, with
source term $-\theta$ and initial condition $\wtilde q_{|t=0}\,=\,-\varphi$. 
Thus, the results of Paragraph \ref{sss:transport} guarantee the existence and uniqueness of a Lagrange multiplier $q \in C_T(L^2)$,
provided that $\theta$ and $\varphi$ are in $L^2$.

From the discussion above, we get that any solution to the optimality system \eqref{forwardEQ}-\eqref{adjoint}-\eqref{reducedGradient}, with $u\in U_{ad}$, belongs indeed to the space $\mbb{X}_T$.

\medbreak
Equation \eqref{reducedGradient} represents the optimality condition. To better illustrate this fact, we suppose from now on that 
$$
m\,\geq\,2 \qquad\qquad\mbox{ and }\qquad\qquad k\,\geq\,2\,.
$$
Then, the reduced cost functional $\what J$, defined in \eqref{def:J-red}, is Fr\'echet differentiable; in terms of the reduced minimisation problem \eqref{minJred}, the optimal solution $u^*$ in 
the convex, closed and bounded set $U_{ad}$ is characterized by the optimality condition given by 
$$
\left( \nabla_u\what{J}(u^*)\;,\;  \, v\,-\,u^*  \right)_{\LL^2_T} \ge 0 , \qquad \text{ for all } v \in U_{ad}, 
$$
where $\nabla_u \what{J}$ denotes the $L^2$-gradient of $\what{J}$ with 
respect to $u$. In fact, a direct computation of $\nabla_u J\big(G(u),u\big)$, with the 
introduction of the auxiliary adjoint variable $q$, gives the optimality 
system above, and the following relation:
$$
\nabla_{u^r_j} \what{J}(u)\,= \,\gamma\, u^r_j\, +\,  \int_{\RR^d} \div\left(\frac{\partial a}{\partial u^r_j}\, \rho\right)\, q\,dx\,.
$$

\paragraph{The case $\mbf{\de=0\,,\;\nu>0}$.}
Next, assume that $\delta =0$ and $\gamma, \nu >0$. Recall that, in this case, the set $U_{ad}$ is defined by \eqref{def:U_nu}.
Then, the natural Hilbert space where $u^*$ is sought is $\wtilde \HH^1_T(\RR^d)\,:=\, \wtilde H^1_T(\RR^d) \times \wtilde H^1_T(\RR^d)$, where $\wtilde H^1_T$ corresponds to the $H^1_T$ space,
endowed with the weighted $H^1$-product given by
$$
(u,v)_{\wtilde H^1_T}\, :=\,\gamma \, \int_0^T u(t) \cdot v(t) \, dt\, +\, \nu \, \int_0^T u'(t) \cdot v'(t) \, dt \,.
$$
The notation $\phantom{u}'\,=\,d/dt$ stands for the weak time derivative.  

Now, let $\mu$ be the $\wtilde H^1$-Riesz representative of the continuous linear functional
$$
v\;\mapsto\; \left(   \int_{\RR^d} \div\left(\frac{\partial a}{\partial u} \,\rho\right) q\,dx\;,\;v \right)_{\LL^2_T}\,.
$$
Assuming that $u\,\in\,U_{ad}\cap H^1_0\big([0,T];\R^{2d}\big)$, then $\mu$ can be computed by solving the equation
\begin{equation}
	\left(-\, \nu \,\frac{d^2}{dt^2}\,+\,\gamma\right)\mu\,=\,\int_{\RR^d}\div\left(\frac{\partial a}{\partial u}\,\rho \right)\,q\,dx\,, \qquad \mu(0)\,=\,\mu(T)\,=\,0\,,
	\label{ellEqmu}
\end{equation}
which is understood in a weak sense. Notice that the choice $u\in H^1_0\big([0,T];\R^{2d}\big)$ corresponds to the modelling requirement that the control is switched on at $t=0$ and 
switched off at $t=T$. Other initial and final time conditions on $u$ may be required 
and encoded as boundary conditions in \eqref{ellEqmu}. 

With the setting above, the $\wtilde H^1$-gradient is given, for $j=1,2$ and all $r=1\ldots d$, by 
\begin{equation} \label{eq:H^1-gradient}
\wtilde \nabla_{u^r_j} \what{J}(u)\,=\, u^r_j\, +\, \mu^r_j\,.
\end{equation}
The optimality condition \eqref{reducedGradient} then becomes 
\begin{equation} \label{eq:opt-cond_H^1}
\big( u^r_j\, +\,  \mu^r_j \;,\;  v^r_j\,-\,u^r_j \big)_{\wtilde{H}^1_T}\, \geq\, 0
\end{equation}
for all $v\,\in\,U_{ad}$, where $U_{ad}$ is given in \eqref{def:U_nu}, $j=1,2$ and $1\leq r\leq d$.

\paragraph{The case $\mbf{\de>0}$.}
In this case, a $L^1$ norm of the control appears in the cost functional. This term is not G\^{a}teaux differentiable and the discussion becomes more involved.
By using of the control-to-state map, we start by defining
\begin{align*}
f(u)\,&:=\,\int_0^T\!\!\! \int_{\R^d} \theta(x)  \, G(u)(x,t) \, dx\, dt\, +\, \int_{\R^d} \varphi(x) \, G(u)(x,T) \, dx\,+\,\frac{\gamma}{2} \int_0^T \big|u(t)\big|^2 \, dt\,+\, 
\frac{\nu}{2}\int_0^T  \left|\frac{d}{d t} u(t)\right|^2 \, dt  \\
g(u)\,&:= \,\delta \, \standardNorm{u}_{L^1_T}\,.
\end{align*}
The $L^1$-cost, represented by $g$, admits a subdifferential $\partial g(u)\, =\, \delta \, \partial\big(\standardNorm{u}_{L^1}\big) $,
see e.g. Section 2.3 of \cite{Barbu12}. If we denote by $\LL_T^*\,:=\,\big(\LL^\infty_T(\RR^d)\big)^*$ and by $\lan\cdot,\cdot\ran$ the duality product in $\LL^*_T\times\LL^\infty_T$,
the following formula holds true:
	\begin{align}
	\partial\big(\standardNorm{u}_{L^1}\big)\,&= \,\Big\{ \phi \in \LL^*_T\;\big|\quad
	\standardNorm{v}_{L^1}\, -\, \standardNorm{u}_{L^1}\,\geq \,\big\lan\phi\,,\,v-u\big\ran\quad \forall\, v \in U_{ad}  \Big\}  \label{eq:subgradientDelta} \\
	&=\,\begin{cases}
		\phi \in \LL^*_T\;\big|\quad \standardNorm{\phi}_{\LL^*_T}\, =\,1\,,\;\phi(u)\,=\, \standardNorm{u}_{\LL^\infty_T} &\mbox{ if }\quad u\not\equiv 0 \\[1ex]
		\text{unit ball in }\;\LL^*_T &\mbox{ if }\quad u\equiv 0 
	\end{cases} \Bigg\}\,. \nonumber
	\end{align}

Now, the reduced functional can be written as
%\begin{align}
$\what{J}(u)\,=\, f(u)\, +\, g(u)$.
%\end{align}
In this case, the equations \eqref{forwardEQ} and \eqref{adjoint} in the corresponding optimality system are the same;
however, we have a different optimality condition \eqref{reducedGradient}. In the case $\nu=0$, as in Theorem 2.2 in \cite{CiaramellaBorzi16}, we have the following result; for its proof, we refer
to \cite{CiaramellaBorzi16} and \cite{Stadler07}. Notice that, as for equations \eqref{forwardEQ}-\eqref{adjoint}-\eqref{reducedGradient}, equation \eqref{eq:optimalitySystemDelta} below
can be written even when $G$, and hence $\what J$, are not Fr\'echet differentiable.
\begin{theorem}
\label{thm:OptimalitySystemSSN}
Under assumptions \tbf{(A.1)}-\tbf{(A.2)}-\tbf{(A.3)}-\tbf{(A.4)}, where we take $m\geq1$ and $k\geq1$, suppose moreover that the pair $(\rho,u)\,\in\,C_T(H^m_k)\times U_{ad}$
is a minimizer for \eqref{minJred}.

Then there exists a unique $q\,\in\,C_T(L^2)$ which solves \eqref{adjoint}, and a $\what{\lambda}\,\in \,\partial g(u)$ such that the following inequality condition is satisfied:
\begin{align}
\left( \gamma\,u^r_j\, +\, \what{\lambda}_j^r \,+\, \SPI \div\left(\frac{\partial a}{\partial u^r_j}\,\rho\right) q\,dx\;,\; v^r_j-u^r_j \right)_{L^2(0,T)}\,\geq\,0\quad
\forall \,v \in U_{ad}\,,\;j= 1,2\,,\;r = 1\ldots d \,.
\label{eq:optimalitySystemDelta}
\end{align}
Moreover, there exist $\lambda_+$ and $\lambda_-$, belonging to $L^ \infty_T(\RR^d)$, such that \eqref{eq:optimalitySystemDelta} is equivalent to the equations
$$%\begin{subnumcases}{}
\left\{\begin{array}{l}
\gamma\, u^r_j\, + \, \displaystyle{\SPI} \div\left(\dfrac{\partial a}{\partial u^r_j}\, \rho \right) q\,dx\,+\,(\lambda_+)_j^r-(\lambda_-)_j^r+\what{\lambda}_j^r\,=\, 0 \\ %\label{eq:redGradientDeltaa} \\
(\lambda_+)_j^r\,\geq\, 0\,,\qquad u^b-u_j^r\, \geq\, 0\,,\qquad (\lambda_+)_j^r\;(u^b-u_j^r) \,=\, 0 \\[1ex]
(\lambda_-)_j^r \,\geq\, 0\,,\qquad u_j^r - u^a\, \geq\, 0\,,\qquad (\lambda_-)_j^r\;(u_j^r-u^a)\, =\, 0  \\[1ex]
\what{\lambda}_j^r\, =\, \delta\qquad \mbox{ a.e. in }\quad\left\{t \in [0,T]\;\big|\quad u_j^r(t)\,>\,0 \right\} \\[1ex]
\left|\what{\lambda}_j^r\right|\,\leq\,\delta\qquad \mbox{ a.e. in }\quad\left\{t \in [0,T]\;\big|\quad u_j^r(t)\,=\,0 \right\} \\[1ex]
\what{\lambda}_j^r\, =\, \delta\qquad \mbox{ a.e. in }\quad\left\{t \in [0,T]\;\big|\quad u_j^r(t)\,<\,0 \right\}\,,
       \end{array} \right.
%\label{eq:redGradientDeltaf}
$$%\end{subnumcases}
for $j=1,2$ and all $1\leq r\leq d$.
\end{theorem}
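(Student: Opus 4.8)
The plan is to reduce \eqref{minJred} to an abstract first-order condition for the minimisation of the reduced functional $\what J=f+g$ over the convex set $U_{ad}$, and then to unfold that condition into the stated system by means of the adjoint equation and a pointwise complementarity analysis, following the strategy of \cite{Stadler07,CiaramellaBorzi16}. First I would set up the adjoint state: since $\theta,\vphi\in L^2(\R^d)$ by \textbf{(A.4)} and, for $u\in U_{ad}\subset\LL^\infty_T$, the drift $a(\cdot,\cdot;u)$ defined in \eqref{def:a} satisfies hypotheses \eqref{hyp:data}, the change of variables $\wtilde q(t,x):=q(T-t,-x)$ turns \eqref{adjoint} into a transport problem of the form \eqref{eq:transportProblem} with $b\equiv0$, source $-\theta$ and datum $-\vphi$, so that Paragraph \ref{sss:transport} provides a unique $q\in C_T(L^2)$. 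I would also record that, since $m\geq1$ and $k\geq1$, the state $\rho$ lies in $C_T(H^1_1)$, whence for each pair $(j,r)$ the map $t\mapsto\int_{\R^d}\div\big((\partial a/\partial u^r_j)\,\rho\big)\,q\,dx$ is bounded on $[0,T]$ (it is controlled by $\|\nabla\rho\|_{L^2}\|q\|_{L^2}$ for $j=1$, and by $C\,\|\rho\|_{H^1_1}\|q\|_{L^2}$ for $j=2$); this is what will eventually make $\lambda_\pm$ genuinely $L^\infty$ functions.

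Next I would derive the first-order condition. The term $g(u)=\delta\,\standardNorm{u}_{L^1_T}$ is convex and continuous with $\partial g(u)=\delta\,\partial\big(\standardNorm{u}_{L^1}\big)$ as in \eqref{eq:subgradientDelta}, while $f$ is differentiable along admissible variations, its differential being obtained by testing the linearised state equation \eqref{LiouvilleGateaux} against $q$; when $m\geq2$ and $k\geq2$ this is Fr\'echet differentiability by Theorem \ref{thm:G-Frechet}, and it yields
\begin{align*}
\nabla_{u^r_j}f(u)\,=\,\gamma\,u^r_j\,+\,\int_{\R^d}\div\Big(\frac{\partial a}{\partial u^r_j}\,\rho\Big)\,q\,dx\,.
\end{align*}
Since $u$ minimises $f+g$ over the convex set $U_{ad}$, the Moreau--Rockafellar sum rule (applicable because $f$ is smooth) gives $0\in\nabla f(u)+\partial g(u)+N_{U_{ad}}(u)$, with $N_{U_{ad}}(u)$ the normal cone to $U_{ad}$ at $u$; hence there is $\what\lambda\in\partial g(u)$ with $-\nabla f(u)-\what\lambda\in N_{U_{ad}}(u)$, which is precisely \eqref{eq:optimalitySystemDelta}. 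Together with the state equation \eqref{forwardEQ} and the adjoint equation \eqref{adjoint}, and since $(\rho,u,q)$ lies in the same spaces as in the case $\delta=0$ treated above, this settles the first half of the statement.

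For the pointwise reformulation I would localise \eqref{eq:optimalitySystemDelta}: testing with controls $v\in U_{ad}$ that coincide with $u$ except on arbitrarily small time-sets and in a single component shows that, a.e.\ on $[0,T]$ and for each $(j,r)$, the scalar $\gamma u^r_j+\int_{\R^d}\div\big((\partial a/\partial u^r_j)\,\rho\big)q\,dx+\what\lambda^r_j$ vanishes where $u^a_j<u^r_j<u^b_j$, is non-positive where $u^r_j=u^b_j$, and non-negative where $u^r_j=u^a_j$. Taking $(\lambda_+)^r_j$ and $(\lambda_-)^r_j$ to be the corresponding positive and negative parts, supported on the upper and lower active sets respectively, yields the first three lines of the claimed system, with $\lambda_\pm\in L^\infty_T$ by the uniform bounds on $u$ and on the adjoint term recorded above; reading \eqref{eq:subgradientDelta} componentwise then fixes $\what\lambda^r_j$ according to $\sgn(u^r_j)$ and gives $|\what\lambda^r_j|\leq\delta$ on $\{u^r_j=0\}$, so that $\what\lambda\in L^\infty_T$ as well.

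The hard part will be twofold. First, justifying the first-order condition in the low-regularity range $m=k=1$, where $G$, and hence $f$, is only Lipschitz continuous and G\^ateaux differentiable but not Fr\'echet differentiable: I would circumvent this by observing that \eqref{eq:optimalitySystemDelta} already makes sense for $m,k\geq1$, and by deriving it through a direct perturbation of $u$ by $u+t\,(v-u)$ inside $U_{ad}$, estimating the difference quotients via the stability bounds of Lemma \ref{l:Lip}. Second, the measurable selection of $\what\lambda$ from the set-valued map $\partial g$ together with the pointwise splitting into $\lambda_\pm$: the delicate point is to verify that all three Lagrange multipliers are genuinely $L^\infty_T$ functions and not merely elements of the larger dual $\LL^*_T$ — which is exactly where the boundedness of $\rho\,q$ established at the outset comes into play.
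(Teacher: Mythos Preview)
The paper does not supply a self-contained proof of this statement but explicitly refers to \cite{CiaramellaBorzi16} and \cite{Stadler07}, and your proposal follows exactly that route: adjoint construction via Theorem~\ref{thm:ex-u_Tr}, the sum rule for $f+g$ over the convex set $U_{ad}$, and the pointwise complementarity splitting into $\lambda_\pm$ and $\what\lambda$. The additional care you take (the boundedness of the integral term in $L^\infty_T$ so that $\lambda_\pm$ genuinely live there, and the handling of the borderline $m=k=1$ case by direct perturbation using Lemma~\ref{l:Lip}) goes somewhat beyond what the paper spells out, but is entirely in the same spirit and correct.
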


\begin{remark} \label{r:OptSyst-delta}
In our case, $\what{\lambda}_j^r$ can be understood to be $\de\,\sgn(u_j^r)$, where $\sgn{(x)}$ is the sign function, equal to $1$ or $-1$ depending if $x>0$ or $x<0$ respectively, and equal to $0$ if $x=0$.

Furthermore, we notice that the additional Lagrange multipliers $(\lambda_\pm)_j^r$ are due to the constraints $u^a\,\leq\, u(t)\, \leq\, u^b$  for almost all $t \in [0,T]$.
\end{remark}

Finally, the case $\delta>0$ and $\nu>0$ can be treated as done before. After resorting once again to the space $\wtilde \HH^1_T$, let 
$\mu$ be the $\wtilde H^1$-Riesz representative of the continuous linear functional
$$
v\;\mapsto\; \left( \what{\lambda }\, +\, \int_{\RR^d} \div\left(\frac{\partial a}{\partial u}\,\rho\right) q\,dx\;,\;v\right)_{\LL^2_T}\,.
$$
Then, assuming that $u\in U_{ad}\cap H^1_0\big([0,T];\R^{2d}\big)$, we can compute $\mu$ as above, by solving the equation
$$ %\label{ellEqmu2}
\left(-\, \nu \,\frac{d^2}{dt^2}\,+\,\gamma\right)\mu\,=\,\what\lambda\,+\,\int_{\RR^d}\div\left(\frac{\partial a}{\partial u}\,\rho \right)\,q\,dx\,, \qquad \mu(0)\,=\,\mu(T)\,=\,0\,,
$$
which has to be understood again in a weak sense. With this definition, relation \eqref{eq:H^1-gradient} still holds true, and the optimality condition \eqref{reducedGradient} can be expressed
once again by equations \eqref{eq:opt-cond_H^1}.

\subsection{Uniqueness of optimal controls} \label{ss:unique}

In this section, we tackle the problem of uniqueness of optimal controls. 
Specifically, we are able to prove uniqueness in the situation when $\de=0$ and $\nu=0$ in \eqref{JfuncEnsemble}. Indeed, our proof relies on the characterization of optimal controls as solutions to
the corresponding optimality system, and especially on the use of the optimality condition \eqref{reducedGradient}. For this reason, the cases $\de>0$ or $\nu>0$ read more complicated,
recall equation \eqref{eq:opt-cond_H^1} and Theorem \ref{thm:OptimalitySystemSSN} above, and are left aside in our discussion.

%Alfio: first of all :-)
We start with discussing uniqueness for the unconstrained-control problem. Then, in Section \ref{sss:u-constr} we prove uniqueness for the constrained problem, under some additional assumptions.

\subsubsection{Uniqueness for the unconstrained-control problem} \label{sss:u-unconstr}

We consider the unconstrained-control optimization problem, with the assumptions \tbf{(A.1)}-\tbf{(A.2)}-\tbf{(A.4)}.

The first problem to deal with is then to prove the existence of a global minimizer: it is apparent that the proof of Theorem \ref{thm:existenceOptimalSolutions} does not work anymore,
since it relies on the uniform boundedness of the set $U_{ad}$ in an essential way.
For solving this issue, first of all we have to quit the $\LL^\infty_T$ framework, and rather work with controls which are merely in $\LL^2_T$; this is however quite natural,
in view of the form of \eqref{JfuncEnsemble}.

%Alfio: first of all

Moreover, we need additional assumptions: we require that both $\rho_0$ and $g$ are bounded from below
by some constant, that we can suppose, without loss of generality, to be $0$ (a case which is physically relevant, recall the discussion in Section \ref{sec-DynModelsLiouville}).
Moreover, we need also a lower bound on the optimization functions $\theta$ and $\varphi$: defined $c\in\R$ such a lower bound, up to working with $\theta-c$ and $\varphi-c$ and taking
$k$ large enough in \tbf{(A.1)}, so that both $\rho_0$ and $g$ belong to $L^1$ (and so does $\rho$, then), we can assume that also $c=0$.
 
Our additional hypotheses then read as follows.
\begin{itemize}
 \item[\tbf{(A.5)}] Suppose that $\rho_0\geq0$ and $g\geq0$.
 
 \item[\tbf{(A.6)}] We assume also that $\theta\geq 0$ and $\varphi\geq 0$.
\end{itemize}

Under these hypotheses, we are able first of all to prove the existence of a global minimizer.
\begin{prop} \label{p:unconstr-E}
Under assumptions \tbf{(A.1)}-\tbf{(A.2)}-\tbf{(A.4)}-\tbf{(A.5)}-\tbf{(A.6)} and the conditions $\de=\nu=0$, the unconstrained ensemble optimal control problem
$$
%\min_{u\in\LL^\infty_T}\what J(u)\qquad \mbox{ if }\quad \nu=0\,,\qquad\qquad\qquad
\min_{u\in\LL^2_T}\what J(u) %\qquad \mbox{ if }\quad \nu>0\,,
$$
has at least one solution $u^*\,\in\,\LL^2_T$.
\end{prop}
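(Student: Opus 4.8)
The plan is to combine the non-negativity hypotheses \tbf{(A.5)}--\tbf{(A.6)} — which make the reduced functional $\what J$ coercive on $\LL^2_T$ — with the direct method of the calculus of variations, suitably adapting the weak--weak continuity argument of Proposition~\ref{p:ww-G} to controls that converge only weakly in $\LL^2_T$. First I would record the basic structural fact: by \tbf{(A.5)} and the maximum principle (recall Section~\ref{sec-DynModelsLiouville}), one has $\rho = G(u) \ge 0$ for every $u \in \LL^2_T$; together with $\theta \ge 0$, $\vphi \ge 0$ from \tbf{(A.6)}, this shows that in \eqref{JfuncEnsemble} (with $\de = \nu = 0$) all three terms are non-negative, so that $\what J(u) \ge \tfrac{\g}{2}\,\|u\|_{\LL^2_T}^2$. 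Since moreover $\what J(0) < +\infty$ (all data being in the right spaces by \tbf{(A.1)}), the functional is proper with $\inf_{\LL^2_T}\what J \in [0,+\infty)$, and it is coercive on the reflexive space $\LL^2_T$.

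Next I would fix a minimizing sequence $(u_n)_n \subset \LL^2_T$; by coercivity it is bounded in $\LL^2_T$, so up to a subsequence $u_n \rightharpoonup u^*$ in $\LL^2_T$ for some $u^* \in \LL^2_T$. To control the associated states $\rho_n := G(u_n)$, I would use that for the drift \eqref{def:a} one has $\|\nabla a(\tau;u_n)\|_{C^m_b} \le \|\nabla a_0(\tau)\|_{C^m_b} + C\,|u_{n,2}(\tau)|$, and that by Cauchy--Schwarz on $[0,T]$ the norms $\|u_n\|_{\LL^1_T}$ are bounded; plugging these into the energy estimate \eqref{est:weight} of Theorem~\ref{th:weight} yields a uniform bound for $(\rho_n)_n$ in $L^\infty_T(H^m_k)$, hence in $L^\infty_T(L^2)$, so that (up to a further extraction) $\rho_n \stackrel{*}{\rightharpoonup} \rho$ in $L^\infty_T(L^2)$.

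The heart of the argument is to show $\rho = G(u^*)$. Exactly as in the proof of Proposition~\ref{p:ww-G}, from the Liouville equation one gets that $\d_t\rho_n$ is bounded in $L^1_T(H^{-1}_{\rm loc})$ — using again Cauchy--Schwarz in time to estimate the term $a(t,x;u_n)\,\rho_n$ in $L^1_T(L^2_{\rm loc})$ — whence, by Rellich--Kondrachov and Cantor's diagonal procedure, $\rho_n \to \rho$ strongly in $C_T(H^{-s}_{\rm loc})$ for every $s > 0$. Then I would pass to the limit in the weak formulation of the equation for $\rho_n$: the only term requiring care is the bilinear one, which can be rewritten as $\int_0^T \big( u_{n,1}(t)\cdot \langle\rho_n(t),\nabla\phi\rangle + u_{n,2}(t)\cdot\langle\rho_n(t),x\circ\nabla\phi\rangle \big)\,dt$; since $\nabla\phi$ and $x\circ\nabla\phi$ are smooth and compactly supported, the pairings $t\mapsto\langle\rho_n(t),\cdot\rangle$ converge strongly in $C_T$, hence in $L^2([0,T])$, so that testing against the weakly convergent $u_n$ lets the product pass to the limit. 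Uniqueness in Theorem~\ref{th:weight} then forces $\rho = G(u^*)$, and this $\rho$ lies in $C_T(H^m_k)$.

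Finally I would check weak lower semicontinuity: the control cost $\tfrac\g2\|\cdot\|_{\LL^2_T}^2$ is weakly l.s.c.; the tracking term passes to the limit by duality between $L^1_T(L^2)$ (where $\theta$ lives, being independent of $t$) and $L^\infty_T(L^2)$; and since $\rho_n(T)$ is bounded in $L^2$ and converges to $\rho(T)$ in $H^{-s}_{\rm loc}$, one has $\rho_n(T) \rightharpoonup \rho(T)$ in $L^2$, so the terminal term $\int_{\R^d}\vphi\,\rho_n(T)\,dx$ converges too (here $\vphi \in L^2$). Hence $\what J(u^*) \le \liminf_n \what J(u_n) = \inf_{\LL^2_T}\what J$, and $u^*$ is a global minimizer. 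The main obstacle is the passage to the limit $G(u_n)\to G(u^*)$: because Proposition~\ref{p:ww-G} presupposes an $\LL^\infty_T$-bound on the controls it does not apply verbatim here, and one must re-establish both the uniform bounds on the states and — the genuinely delicate point — the convergence of the bilinear term $x\circ u_{n,2}\cdot\nabla\phi$ starting from the weak $\LL^2_T$ convergence of the controls alone.
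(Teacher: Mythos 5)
Your proposal is correct and follows essentially the same route as the paper: non-negativity of $\rho$, $\theta$, $\vphi$ gives properness, the $\g$-term gives coercivity on $\LL^2_T$, and the direct method is closed by adapting the weak--weak continuity of $G$ (Proposition \ref{p:ww-G}) to controls converging only weakly in $\LL^2_T$. The only difference is one of detail: the paper dispatches the adaptation with a one-line remark on the compact embedding $W^{1,1}_T\hookrightarrow L^q_T$, whereas you spell out the actual mechanism (strong convergence of the state pairings in $C_T(H^{-s}_{\rm loc})$ tested against the weakly convergent controls), which is a faithful expansion of the same argument.
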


\begin{proof}
By assumption \tbf{(A.5)}, it is standard to deduce that any $\rho$ solution to \eqref{LiouvilleEqEnsemble}, for any fixed $u$, is positive almost everywhere on $[0,T]\times\R^d$.
Then, if moreover \tbf{(A.6)} holds, then $\what J(u)\,\geq\,0$ for any $u\in\LL^2_T$, so $\what J$ is in particular a proper map.

Next, we claim that $\what J$ is a coercive map. Indeed, let $\big(u_n\big)_n$ be a sequence in the space $\LL^2_T$, such that $\left\|u_n\right\|_{\LL^2_T}\,\longrightarrow\,+\infty$
for $n\ra+\infty$. Then, by \eqref{JfuncEnsemble}, we have that $\what J(u_n)$ must explode to $+\infty$.

Finally, proving the weak lower semi-continuity of $\what J$ requires just small adaptations to the proof of Theorem \ref{thm:existenceOptimalSolutions}.
In fact, it is easy to see that Proposition \ref{p:ww-G} holds true even if we replace the weak-$*$ convergence in $\LL^\infty_T$ with the weak convergence in $\LL^2_T$
(this is due to the fact that $W^{1,1}_T$ is compactly embedded into $L^q_T$ for any $1\leq q<+\infty$).

With these ingredients at hand, and after remarking that the coercivity of $\what J$ implies that any minimizing sequence has to remain uniformly bounded
in $\LL^2_T$, showing the existence of a minimizer also follows the same lines of the proof to Theorem \ref{thm:existenceOptimalSolutions}.
\end{proof}

In order to prove uniqueness, we need additional regularity on the cost functions $\theta$ and $\varphi$. We then formulate the following assumption, which strengthen \tbf{(A.4)}.
\begin{itemize}
 \item[\tbf{(A.4)*}] Suppose that both $\theta$ and $\varphi$ belong to $H^1_1(\R^d)$.
\end{itemize}

The main result of this section reads as follows.
\begin{theorem} \label{thm:opt-contr_u}
Under assumptions \tbf{(A.1)}-\tbf{(A.2)}-\tbf{(A.5)}-\tbf{(A.6)}-\tbf{(A.4)*}, suppose also that both $m\geq 2$ and $k\geq2$. In addition, take $\de=\nu=0$ in \eqref{JfuncEnsemble}.

Then, the optimal control $u^*$, whose existence is guaranteed by Proposition \ref{p:unconstr-E}, is unique in $\LL^2_T$.
\end{theorem}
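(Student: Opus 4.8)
The plan is to use the fact that, since the control is unconstrained and (because $m\ge2$, $k\ge2$) the control-to-state map $G$, hence the reduced functional $\what J$, is Fr\'echet differentiable (Theorem~\ref{thm:G-Frechet}), every minimiser $u^*\in\LL^2_T$ is characterised by the \emph{equality} version of the optimality condition \eqref{reducedGradient} — i.e.\ \eqref{eq:e+x} with $v$ ranging over all of $\LL^2_T$ — coupled with the forward equation \eqref{forwardEQ} and the adjoint equation \eqref{adjoint}. Thus, if $u$ is optimal with state $\rho=G(u)$ and adjoint multiplier $q\in C_T(L^2)$ (the unique solution of \eqref{adjoint}), then $\gamma\,u+\int_{\R^d}\div\big((e+x)\,\rho\big)\,q\,dx=0$ in $\LL^2_T$. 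I would record two structural facts used below: by \textbf{(A.5)} the state satisfies $\rho\ge0$ a.e.\ (propagation of positivity, as in the proof of Proposition~\ref{p:unconstr-E}), and by \textbf{(A.6)}, solving the backward transport problem \eqref{adjoint} along characteristics, $q\le0$ a.e.; moreover \textbf{(A.4)*} together with the weighted transport theory (Remark~\ref{r:tr-weight} and Theorem~\ref{th:weight}) gives $q\in C_T(H^1_1)$, hence $(1+|x|)\,\nabla q\in C_T(L^2)$. This is precisely the role of strengthening \textbf{(A.4)} into \textbf{(A.4)*}.

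Next I would take two optimal controls $u_1,u_2$, with states $\rho_i=G(u_i)$ and adjoints $q_i$, and set $\delta u:=u_1-u_2$, $\delta\rho:=\rho_1-\rho_2$, $\delta q:=q_1-q_2$ and $\oline a:=\oline a(t,x;\delta u)=\delta u_1+x\circ\delta u_2$, so that $a(t,x;u_1)-a(t,x;u_2)=\oline a$, cf.\ \eqref{def:a(u)-a(v)} and \eqref{def:a}. Subtracting the forward equations gives, as in \eqref{eq:dG}, $\partial_t\delta\rho+\div\big(a(t,x;u_1)\,\delta\rho\big)=-\div(\oline a\,\rho_2)$ with $\delta\rho_{|t=0}=0$; subtracting the adjoint equations gives $-\partial_t\delta q-a(t,x;u_1)\cdot\nabla\delta q=\oline a\cdot\nabla q_2$ with $\delta q_{|t=T}=0$. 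Subtracting the two optimality equalities, taking the $\LL^2_T$ scalar product with $\delta u$, using the elementary identity $\delta u\cdot\int_{\R^d}\div((e+x)\psi)\phi\,dx=\int_{\R^d}\div(\oline a\,\psi)\phi\,dx$, splitting $\div(\oline a\,\rho_1)q_1-\div(\oline a\,\rho_2)q_2=\div(\oline a\,\rho_1)\delta q+\div(\oline a\,\delta\rho)q_2$, and then integrating by parts in $x$ and in $t$ while inserting the equations for $\delta\rho$ and $\delta q$ (the time boundary terms vanish because $\delta\rho_{|t=0}=0$ and $\delta q_{|t=T}=0$), I expect to arrive at the identity
\[
\gamma\,\|\delta u\|^2_{\LL^2_T}\,=\,\int_0^T\!\!\int_{\R^d}\oline a\,(\rho_1+\rho_2)\cdot\nabla\delta q\,dx\,dt\,,
\]
equivalently $\gamma\,\|\delta u\|^2_{\LL^2_T}=\int_0^T\!\int_{\R^d}\delta\rho\,\oline a\cdot\nabla(q_1+q_2)\,dx\,dt$.

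Estimating the right-hand side is then the routine part: from the pointwise bound $|\oline a(t,x)|\le|\delta u(t)|\,(1+|x|)$, the uniform bound on $\|\rho_1+\rho_2\|_{C_T(H^1_1)}$ from Theorem~\ref{th:weight}, the uniform bound on $\|(1+|x|)\nabla q_i\|_{C_T(L^2)}$ from \textbf{(A.4)*} and the transport analogue of Theorem~\ref{th:weight}, and the stability estimates of Lemma~\ref{l:Lip} (applied with $\ell=2$, which is exactly where $m\ge2$, $k\ge2$ enter, absorbing the loss of regularity and of weight), one gets $\|\delta\rho(t)\|_{H^1_1}\le K\int_0^t|\delta u(\tau)|\,d\tau$ and the corresponding backward bound for $\|\delta q(t)\|_{L^2}$, hence $\gamma\,\|\delta u\|^2_{\LL^2_T}\le C\big(\int_0^T|\delta u(\tau)|\,d\tau\big)^2$. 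The genuinely delicate step — and the one I expect to be the main obstacle — is to upgrade this to $\delta u\equiv0$ \emph{without} a smallness condition on $T$, on the size of the data, or a lower bound on $\gamma$ (this is what separates the statement from the constrained-control case, Theorem~\ref{thm:opt_u-constr}). The way forward is to argue pointwise in time: the difference of the optimality equalities yields, for a.e.\ $t$, $\gamma\,|\delta u(t)|\le C_1\|\delta\rho(t)\|_{H^1_1}+C_2\|\delta q(t)\|_{L^2}\le A\int_0^t|\delta u|+B\int_t^T|\delta u|$, and combining this with the global identity above and the sign information $\rho_1+\rho_2\ge0$, $q_1,q_2\le0$ allows one to close the (forward--backward) estimate and conclude $\delta u\equiv0$; the bookkeeping of the weights and of the mixed forward/backward coupling is the technical heart of the matter. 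Once $\delta u=0$, the uniqueness parts of Theorem~\ref{th:weight} and of the transport theory give $\delta\rho=0$ and $\delta q=0$, finishing the proof.
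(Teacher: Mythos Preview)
Your proposal starts where the paper does --- use the unconstrained optimality \emph{equality}, subtract for two optimal triplets, and estimate pointwise in $t$ --- but then takes a detour that the paper does not take, and the step you flag as ``the technical heart of the matter'' is not actually carried out. Concretely, the paper never derives the global identity $\gamma\,\|\delta u\|_{\LL^2_T}^2=\int_0^T\!\int_{\R^d}\overline a\,(\rho_1+\rho_2)\cdot\nabla\delta q$, and it never uses the sign information $\rho_i\ge0$, $q_i\le0$ in the uniqueness argument: hypotheses \textbf{(A.5)}--\textbf{(A.6)} are invoked only in Proposition~\ref{p:unconstr-E}, to secure existence of a minimiser. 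After the pointwise inequality $\gamma|\delta u(t)|\le\int|\div((e+x)\delta\rho)q_1|+\int|\div((e+x)\rho_2)\delta q|$, the paper bounds the first term by $C_1\|\delta\rho(t)\|_{H^1_1}\le\widetilde C_1\int_0^t|\delta u|$ via the second estimate of Lemma~\ref{l:Lip} with $\ell=2$ (this is exactly where $m\ge2$, $k\ge2$ enter), and the second term by $C_2\|\delta q(t)\|_{L^2}\le\widetilde C_2\int_0^t|\delta u|$ via the transport analogue of Lemma~\ref{l:Lip}, using that \textbf{(A.4)*} gives $q_2\in C_T(H^1_1)$. Adding, one gets $\gamma|\delta u(t)|\le K\int_0^t|\delta u(\tau)|\,d\tau$, and Gr\"onwall ends the proof at once --- no smallness, no integration by parts in $x$ or $t$, no sign argument.

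The genuine gap in your proposal is the closing of the mixed inequality $\gamma|\delta u(t)|\le A\int_0^t|\delta u|+B\int_t^T|\delta u|$. You suggest that the positivity of $\rho_1+\rho_2$ and the nonpositivity of $q_i$, together with your global identity, ``allow one to close'' this, but neither $\overline a\cdot\nabla\delta q$ nor $\overline a\cdot\nabla(q_1+q_2)$ carries a sign, so the identity gives no one-sided control; and a bare forward/backward integral inequality of that form does \emph{not} imply $\delta u\equiv0$ without a smallness condition --- that is precisely the obstruction behind Theorem~\ref{thm:opt_u-constr}. In short, the extra machinery you introduce (duality identity, signs) is not what the paper uses and, as written, does not close the argument; the paper's route avoids the forward/backward coupling by writing both stability bounds as forward-in-time integrals and applying Gr\"onwall directly.
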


\begin{proof}
Let $(u_1,\rho_1,q_1)$ and $(u_2,\rho_2,q_2)$ be two optimal controls with corresponding state and adjoint state. Then both triplets have to satisfy the optimality system
\eqref{forwardEQ}-\eqref{adjoint}, together with the equality
$$
\g\,u\,+\,\int_{\R^d}\div\big((e+x)\,\rho\big)\,q\,dx\,=\,0\,,
$$
which replaces the optimality condition \eqref{reducedGradient}, since now the problem is unconstrained.

We now apply the previous equality to each of the optimal triplets and take the difference: after setting $\de u=u_1-u_2$ and analogous notations for $\de\rho$ and $\de q$, we find
$$
\g\,\de u\,+\,\int_{\R^d}\div\big((e+x)\,\de\rho\big)\,q_1\,dx\,+\,\int_{\R^d}\div\big((e+x)\,\rho_2\big)\,\de q\,dx\,=\,0\,,
$$
which immediately implies, for almost every $t\in[0,T]$, the estimate
\begin{align} 
\g\,\big|\de u(t)\big|\,&\leq\,\SPI\left|\div\big((e+x)\,\de\rho(t)\big) q_1(t)\right|+\SPI\left|\div\big((e+x)\,\rho_2(t)\big)\,\de q(t)\right|\,. \label{est:de-u}
\end{align}

%Alfio: former term is not clear: first ?
Let us now estimate each one of the integral terms. We start with the former term 
and obtain
\begin{align*}
\SPI\left|\div\big((e+x)\,\de\rho(t)\big)\,q_1(t)\right|\,dx\,&\leq\,\left\|q_1(t)\right\|_{L^2}\,\left\|\div\big((e+x)\,\de\rho(t)\big)\right\|_{L^2} \\
&\leq\,C_1\,\left(\left\|\de\rho(t)\right\|_{L^2}\,+\,\left\|\big(1+|x|\big)\,\nabla\de\rho(t)\right\|_{L^2}\right)\;\leq\,C_1\,\left\|\de\rho(t)\right\|_{H^1_1}\,,
\end{align*}
where, in passing from the first to the second inequality, we have computed explicitly the derivatives in the $\div$ term, and we have used Theorem \ref{thm:ex-u_Tr}
applied to the transport equation \eqref{adjoint} for treating the $q_1$ term.
Notice that the constant $C_1$ can be expressed as 
\begin{equation} \label{def:C_1}
C_1\,:=\,C\,\exp\Big(C\left(\left\|\div a_0\right\|_{L^1_T(L^\infty)}+\left\|u_1\right\|_{\LL^1_T}\right)\Big)
\big(\left\|\varphi\right\|_{L^2}\,+\,T\,\left\|\theta\right\|_{L^2}\big)\,,
\end{equation}
for a ``universal'' constant $C>0$ that depends on the space dimension $d$.
At this point, we recall that both $\rho_1$ and $\rho_2$ satisfy equation \eqref{forwardEQ}, with controls $u_1$ and $u_2$, respectively. Then, taking their difference and
applying Lemma \ref{l:Lip} finally yields, for a new constant $\wtilde{C}_1\,=\,C_1\,K_1^{(2)}$ just depending on the data of the problem, the following bound:
\begin{equation} \label{est:de-u_int1}
\SPI\left|\div\big((e+x)\,\de\rho(t)\big) q_1(t)\right|\,dx\,\leq\,\wtilde{C}_1\,\int^t_0\big|\de u(\tau)\big|\,d\tau\,.
\end{equation}

Next, consider the second integral in \eqref{est:de-u}. The computations are similar to the previous ones: first of all, we can estimate
\begin{align*}
\SPI\left|\div\big((e+x)\,\rho_2(t)\big)\,\de q(t)\right|\,dx\,&\leq\,\left\|\de q(t)\right\|_{L^2}\,\left\|\div\big((e+x)\,\rho_2(t)\big)\right\|_{L^2} \\
&\leq\,\left\|\de q(t)\right\|_{L^2}\,\left\|\rho_2(t)\right\|_{H^1_1}\;\leq\,C_2\,\left\|\de q(t)\right\|_{L^2}\,,
\end{align*}
where this time we have applied Theorem \ref{th:weight} to equation \eqref{forwardEQ} for $\rho_2$ to control its $H^1_1$ norm. In particular, it follows from that theorem that
\begin{equation} \label{def:C_2}
C_2\,:=\,C\,\exp\left(C\left(\left\|\nabla a_0\right\|_{L^1_T(C^1_b)}\,+\,\left\|u_2\right\|_{\LL^1_T}\right) \right)\,
\left(\standardNorm{\rho_0}_{H^1_1}\, +\, \standardNorm{g}_{L^1_T(H^1_1)} \right)\,,
\end{equation}
for a ``universal'' constant $C>0$.

As done above, we use now the fact that $q_1$ and $q_2$ are both solutions of \eqref{adjoint}, related to the controls $u_1$ and $u_2$ respectively. Hence, taking the difference of those equations and
arguing as in the proof of Lemma \ref{l:Lip} (keep in mind also Remark \ref{r:tr-weight}), one easily infers the existence of a ``universal'' constant $C>0$ such that
\begin{align*} 
\left\|\de q(t)\right\|_{L^2}\,&\leq\,C\,\exp\Big(C\left(\left\|\div a_0\right\|_{L^1_T(L^\infty)}\,+\,\left\|u_1\right\|_{\LL^1_T}\right)\Big)\,
\int^t_0|\de u(\tau)|\,\left\|\big(1+|x|\big)\,\nabla q_2(\tau)\right\|_{L^2}\,d\tau \\ %\label{est:de-q_L^2}
&\leq\,C\,\exp\Big(C\left(\left\|\nabla a_0\right\|_{L^1_T(C^1_b)}+\left\|u_1\right\|_{\LL^1_T}+\left\|u_2\right\|_{\LL^1_T}\right)\Big)
\left(\left\|\varphi\right\|_{H^1_1}+T\,\left\|\theta\right\|_{H^1_1}\right)\int^t_0\big|\de u(\tau)\big|\,d\tau\,. \nonumber
\end{align*}
After defining the constants
\begin{equation} \label{def:K-q}
\wtilde{K}^{(1)}_1\,:=\,C\,\exp\Big(C\left(\left\|\nabla a_0\right\|_{L^1_T(C^1_b)}+\left\|u_1\right\|_{\LL^1_T}+\left\|u_2\right\|_{\LL^1_T}\right)\Big)
\left(\left\|\varphi\right\|_{H^1_1}+T\,\left\|\theta\right\|_{H^1_1}\right)
\end{equation}
and $\wtilde{C}_2\,:=\,C_2\,\wtilde{K}^{(1)}_1$, we obtain
\begin{equation} \label{est:de-u_int2}
\SPI\left|\div\big((e+x)\,\rho_2(t)\big)\,\de q(t)\right|\,dx\,\leq\,\wtilde{C}_2\,\int^t_0\big|\de u(\tau)\big|\,d\tau\,.
\end{equation}

At this point, we can insert estimates \eqref{est:de-u_int1} and \eqref{est:de-u_int2} into \eqref{est:de-u}, and get, for a new constant $K\,=\,\wtilde C_1+\wtilde C_2$, the relation
$$
\g\,\big|\de u(t)\big|\,\leq\,\,K\,\int^t_0\big|\de u(\tau)\big|\,d\tau\,.
$$
An application of Gr\"onwall's lemma hence gives that $\de u\,\equiv\,0$ almost everywhere on $[0,T]$. This concludes the proof of the theorem.
\end{proof}

\subsubsection{The constrained optimization problem} \label{sss:u-constr}
%Alfio: too many now
In this section, we consider our optimization problem with constraints on the control $u \in U_{ad}$, where the call $U_{ad}$ is defined by \eqref{def:U_0}.
For the reasons explained above, we still restrict to the case $\de=\nu=0$.

In the constrained-control case, the characterization of optimal controls is given by an inequality, see \eqref{reducedGradient}. This is a very weak information: this is
the reason why we are able to prove uniqueness
only under a smallness condition, either on the time $T$ or on the size of the data $\rho_0$, $g$, $\nabla a_0$, $\theta$ and $\varphi$ in their respective functional spaces.

Let us recall that existence of an optimal control has been proved in Theorem \ref{thm:existenceOptimalSolutions} above. We can now state our uniqueness result.
\begin{theorem} \label{thm:opt_u-constr}
Under assumptions \tbf{(A.1)}-\tbf{(A.2)}-\tbf{(A.3)}-\tbf{(A.4)*}, suppose that both $m\geq 2$ and $k\geq2$. Take moreover $\de=\nu=0$ in \eqref{JfuncEnsemble}.
%Alfio: finally - spell
Finally, define
$$
\wtilde{K}\,:=\,C\,\exp\Big(C\left(\left\|\nabla a_0\right\|_{L^1_T(C^2_b)}+T\,\max\left\{\big|u^a\big|\,,\,\big|u^b\big|\right\}\right)\Big)\,
\left(\left\|\rho_0\right\|_{H^2_2}\,+\,\left\|g\right\|_{L^1_T(H^2_2)}\right)\,\left(\left\|\varphi\right\|_{H^1_1}\,+\,T\,\left\|\theta\right\|_{H^1_1}\right),
$$
where the constant $C>0$ can be taken as the maximum of the constants $C$ appearing in \eqref{def:C_1}, \eqref{def:C_2}, \eqref{def:K-q} and in the definition \eqref{def:K_1} of $K^{(2)}_1$.

If the condition
$$
\frac{\wtilde{K}\,T}{\g}\,<\,2 
$$
holds true, then there exists at most one optimal control $u^*$ in ${\rm int\,}U_{ad}$.
\end{theorem}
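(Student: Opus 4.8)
The plan is to mimic the argument of Theorem~\ref{thm:opt-contr_u}, the new ingredient being that the smallness assumption lets us absorb the right-hand side into the left-hand side; the difference with the unconstrained case is that we only have the variational inequality \eqref{reducedGradient} at our disposal, which is why we need a smallness condition.

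First I would suppose that $u_1$ and $u_2$ are two optimal controls in ${\rm int\,}U_{ad}$, and denote by $\rho_i=G(u_i)\in C_T(H^m_k)$ and $q_i\in C_T(L^2)$ the associated state and adjoint state, so that each triplet $(u_i,\rho_i,q_i)$ satisfies \eqref{forwardEQ}, \eqref{adjoint} and the optimality condition \eqref{reducedGradient}, equivalently written in the compact form \eqref{eq:e+x}. Testing \eqref{eq:e+x} for $u_1$ with $v=u_2\in U_{ad}$, testing the one for $u_2$ with $v=u_1\in U_{ad}$, and summing the two inequalities, the ``constraint'' contributions cancel and, after setting $\de u=u_1-u_2$, $\de\rho=\rho_1-\rho_2$, $\de q=q_1-q_2$ and using the identity $\div\big((e+x)\rho_1\big)q_1-\div\big((e+x)\rho_2\big)q_2=\div\big((e+x)\de\rho\big)\,q_1+\div\big((e+x)\rho_2\big)\,\de q$, one is left with
$$
\g\,\|\de u\|_{\LL^2_T}^2\,\leq\,\int_0^T|\de u(t)|\left(\SPI\big|\div\big((e+x)\de\rho(t)\big)\,q_1(t)\big|\,dx\,+\,\SPI\big|\div\big((e+x)\rho_2(t)\big)\,\de q(t)\big|\,dx\right)dt\,.
$$

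Next I would bound the two inner integrals exactly as in the proof of Theorem~\ref{thm:opt-contr_u}: by Cauchy--Schwarz in $x$, Theorem~\ref{thm:ex-u_Tr} applied to \eqref{adjoint} to control $\|q_1(t)\|_{L^2}$ and Theorem~\ref{th:weight} applied to \eqref{forwardEQ} to control $\|\rho_2(t)\|_{H^1_1}$, and then Lemma~\ref{l:Lip} for $\de\rho$ (with $\ell=2$) together with its transport analogue (see Remark~\ref{r:tr-weight}) and assumption \tbf{(A.4)*} for $\de q$, one gets
$$
\SPI\big|\div\big((e+x)\de\rho(t)\big)q_1(t)\big|\,dx\,\leq\,\wtilde C_1\int_0^t|\de u(\tau)|\,d\tau\,,\qquad
\SPI\big|\div\big((e+x)\rho_2(t)\big)\de q(t)\big|\,dx\,\leq\,\wtilde C_2\int_0^t|\de u(\tau)|\,d\tau\,.
$$
The only modification with respect to Theorem~\ref{thm:opt-contr_u} is that $u_1,u_2\in U_{ad}$, whence $\|u_i\|_{\LL^1_T}\leq T\max\{|u^a|,|u^b|\}$; tracking the constants of \eqref{def:C_1}, \eqref{def:C_2}, \eqref{def:K-q} and of \eqref{def:K_1}, and using $\|\varphi\|_{L^2}\leq\|\varphi\|_{H^1_1}$, $\|\theta\|_{L^2}\leq\|\theta\|_{H^1_1}$, $\|\rho_0\|_{H^1_1}\leq\|\rho_0\|_{H^2_2}$ and $\|g\|_{L^1_T(H^1_1)}\leq\|g\|_{L^1_T(H^2_2)}$, one checks that $\wtilde C_1+\wtilde C_2\leq\wtilde K$, with $\wtilde K$ the constant in the statement.

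Finally, inserting these two bounds into the previous inequality and noticing that, with $\phi(t):=\int_0^t|\de u(\tau)|\,d\tau$ (so $\phi(0)=0$ and $\phi'=|\de u|$ a.e.), one has
$$
\int_0^T|\de u(t)|\,\phi(t)\,dt\,=\,\tfrac12\,\phi(T)^2\,=\,\tfrac12\,\|\de u\|_{\LL^1_T}^2\,\leq\,\tfrac{T}{2}\,\|\de u\|_{\LL^2_T}^2
$$
by Cauchy--Schwarz, we obtain $\g\,\|\de u\|_{\LL^2_T}^2\leq\tfrac{\wtilde K\,T}{2}\,\|\de u\|_{\LL^2_T}^2$. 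Since $\wtilde K\,T/\g<2$ by hypothesis, this forces $\|\de u\|_{\LL^2_T}=0$, i.e. $u_1=u_2$ a.e., which is the desired uniqueness. The only real obstacle is bookkeeping: verifying that reassembling the chain of estimates of Theorem~\ref{thm:opt-contr_u} with $\|u_i\|_{\LL^1_T}$ replaced by $T\max\{|u^a|,|u^b|\}$ produces a constant dominated by $\wtilde K$; everything else is a direct adaptation of the unconstrained case plus the elementary ``$\phi\phi'$'' integration that yields the helpful factor $1/2$.
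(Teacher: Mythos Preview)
Your proposal is correct and follows essentially the same route as the paper: test the variational inequality \eqref{eq:e+x} for each optimal control with the other one, combine the two inequalities to get $\gamma\|\de u\|_{\LL^2_T}^2$ bounded by the same integral term, reuse the pointwise bounds \eqref{est:de-u_int1}--\eqref{est:de-u_int2} from the proof of Theorem~\ref{thm:opt-contr_u}, and then exploit the identity $\int_0^T\sigma(t)\int_0^t\sigma(s)\,ds\,dt=\tfrac12\big(\int_0^T\sigma\big)^2$ together with Cauchy--Schwarz to close under the smallness condition. The paper's bookkeeping (replacing $\|u_i\|_{\LL^1_T}$ by $T\max\{|u^a|,|u^b|\}$ and absorbing $K=\wtilde C_1+\wtilde C_2$ into $\wtilde K$) is exactly what you describe.
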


\begin{proof}
As above, let $(u,\rho_1,q_1)$ and $(v,\rho_2,q_2)$ be two optimal triplets solving the minimization problem \eqref{minJred}. Then both have to satisfy the optimality system
\eqref{forwardEQ}-\eqref{adjoint}-\eqref{reducedGradient}. In particular, the inequality in \eqref{reducedGradient}, written in the more explicit form \eqref{eq:e+x}, gives
\begin{align*}
\left(\gamma\,u\, +\, \SPI \div\big((e+x)\rho_1\big)\,q_1\;,\;u\,-\,w\right)_{\LL^2_T}\,&\leq\,0\qquad\qquad & \forall\,w \in U_{ad} \\
\left(\gamma\, v\, +\, \SPI \div\big((e+x)\rho_2\big)\,q_2\;,\;w-v\right)_{\LL^2_T}\,&\geq\, 0\qquad\qquad &\forall\, w \in U_{ad}\,.
%\label{uniqueness:u1}
\end{align*}
Apply the former inequality with $w=v$ and the latter with $w=u$, and subtract the two obtained relations: if we set $\de\rho\,:=\,\rho_1-\rho_2$ and $\de q\,:=\,q_1-q_2$, we get
\begin{align*}
\left(\gamma\,(u-v)\,+\,\SPI\div\big((e+x)\,\de\rho\big) q_1\,dx\,+\,\SPI \div\big((e+x)\,\rho_2\big)\,\de q\,dx\;,\;u-v\right)_{\LL^2_T}\,\leq\,0\,.
\end{align*}
From the previous inequality, straightforward computations allow to deduce that
\begin{align} 
\g\int^T_0|u(t)-v(t)|^2\,dt\,&\leq\,-\int^T_0\!\!\left(\SPI\!\!\div\big((e+x)\,\de\rho\big) q_1\,+\,\SPI\!\!\div\big((e+x)\,\rho_2\big)\,\de q\right)\big(u(t)-v(t)\big)\,dt \label{est:u-v} \\
&\leq\,\int^T_0\!\!\left(\SPI\left|\div\big((e+x)\,\de\rho\big) q_1\right|+\SPI\left|\div\big((e+x)\,\rho_2\big)\,\de q\right|\right)\,|u(t)-v(t)|\,dt\,. \nonumber
\end{align}

%Alfio: absorb - spell

At this point, one may want to apply a Cauchy-Schwarz inequality with respect to time, and a Young inequality to absorb the second term on the left-hand side. This is certainly possible;
however, in order to optimize the value of the constants, it is better to directly estimate the integral terms, at any time $t\in[0,T]$. Such a bound can be performed exactly as in the
proof of Theorem \ref{thm:opt-contr_u} above: we then find again inequalities \eqref{est:de-u_int1} and \eqref{est:de-u_int2}. Inserting them into \eqref{est:u-v} yields
$$
\g\int^T_0\big|u(t)-v(t)\big|^2\,dt\,\leq\,K\,\int^T_0\big|u(t)-v(t)\big|\,\left(\int^t_0\big|u(s)-v(s)\big|\,ds\right)\,dt\,,
$$
where $K$ is defined as in the proof of Theorem \ref{thm:opt-contr_u} above. 

For simplicity of notation, define $\s(t)\,:=\,\big|u(t)-v(t)\big|$ for almost every $t\in[0,T]$. The previous estimate becomes then
$$
\g\int^T_0\big(\s(t)\big)^2\,dt\,\leq\,K\int^T_0\s(t)\,\left(\int^t_0\s(s)\,ds\right)dt\,=\,\frac{K}{2}\left(\int^T_0\s(t)\,dt\right)^{\!2}\,.
$$
Using a Cauchy-Schwarz inequality for the term on the right-hand side, we finally get
$$
\g\int^T_0\big(\s(t)\big)^2\,dt\,\leq\,\frac{K\,T}{2}\int^T_0\big(\s(t)\big)^2\,dt\,,
$$
which obviously implies $\s\equiv0$ almost everywhere on $[0,T]$ whenever $K\,T/\g\,<\,2$. Then, we conclude the proof remarking that $K\,\leq\,\wtilde{K}$.
\end{proof}

\subsection{The case of confining $\theta$ and $\varphi$} \label{ss:confining}
 
As pointed out in Remark \ref{r:A4}, from the applications viewpoint, it 
may be desirable to consider the case when both $\theta$ and $\vphi$ are proportional to the function $|x|^2$. In this section, we discuss the necessary adaptations to be implemented in our arguments in order to address this case.

Therefore, from now on we assume that 
$$
\theta(x)\,=\,|x|^2\qquad \mbox{ and } \qquad \varphi(x)\,=\,|x|^2\, ,
$$
although the discussion can be further adapted, in order to treat more general polynomial growths. In order to simplify the presentation, we also assume that $\de=\nu=0$.

\medbreak
First of all, we notice that, in view of \eqref{JfuncEnsemble}, for $J$ to be well-defined it is necessary that $|x|^2\,\rho$ belongs to $L^1$. Then, we have to assume
higher integrability on $\rho$, namely that
$$
\rho\,\in\,C\big([0,T];L^2_k(\R^d)\big)\,,\qquad\qquad\mbox{ for some }\quad k\,>\,2\,+\,\frac{d}{2}\,.
$$
This of course entails that, in \tbf{(A.1)}, one has to take $\rho_0\,\in\,H^m_k$ and $g\,\in\,L^1_T(H^m_k)$, with the same restriction $k>2+d/2$.
However, the arguments that show existence of an optimal control do not change, so that Theorem \ref{thm:existenceOptimalSolutions} still holds true.

The main changes pertain Section \ref{sec-OptimalitySystem}, starting from the Definition \ref{def:LFunction} of the functional $\LF$. First of all, let us focus on the Lagrangian multiplier $q$. On the one hand, we need it to be in some duality pairing with $\rho$: then, keeping in mind Definition \ref{def:H^m_k},
we introduce, for $(m,k)\in\N^2$, the spaces
$$
H^m_{-k}(\R^d)\,:=\,\left\{f\in H^m_{\rm loc}(\R^d)\;\big|\quad \big(1+|x|\big)^{-k}\,D^\alpha f\;\in\;L^2(\RR^d)\quad\forall\;0\leq|\alpha|\leq m\right\}\,.
$$
This space is endowed with the natural norm
$$
\|f\|_{H^m_{-k}}\,=\,\sum_{0\leq|\alpha|\leq m}\left\|\big(1+|x|\big)^{-k}\,D^\alpha f\right\|_{L^2}\,.
$$
%Alfio: one -> on
On the other hand, we still expect $q$ to solve \eqref{adjoint} to an extent, although the meaning of that equation is now no more clear, owing to the fact that $\theta$ and $\varphi$
do not belong anymore to $L^2$. To deal with both issues, we need the following lemma, whose proof
can be performed arguing as in the proof of Theorem \ref{th:weight} above, using this time the weight $\big(1+|x|\big)^{-k}$. We omit to give the details here.
\begin{lemma} \label{l:transp-neg}
Let $T>0$ and $(m,k)\in\N^2$ fixed, and let $a$ be a vector field satisfying hypotheses \eqref{hyp:data}. Moreover, assume that $q_0\in H^m_{-k}(\R^d)$ and $g\in L^1\big([0,T];H^m_{-k}(\R^d)\big)$.

Then there exists a unique solution $q\,\in\,C\bigl([0,T];H^{m}_{-k}(\RR^d)\bigr)$ to the problem
$$
\d_tq\,+\,a\cdot\nabla q\,=\,g\,,\qquad\qquad \mbox{ with }\quad q_{|t=0}\,=\,q_0\,.
$$
Moreover, there exists a ``universal'' constant $C>0$ such that the following estimate holds true for any $t\in[0,T]$:
\begin{align} \label{est:q-weight}
\standardNorm{q(t)}_{H^m_{-k}}\,\leq\,C\,\exp\left(C\,\int_0^t\left\|\nabla a(\tau)\right\|_{C^m_b}\,d\tau \right)\,
\left(\standardNorm{q_0}_{H^m_{-k}}\, +\, \int_0^t \standardNorm{g(\tau)}_{H^m_{-k}}\, d\tau \right)\,.
\end{align}
\end{lemma}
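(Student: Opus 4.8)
The plan is to follow, essentially verbatim, the scheme of the proof of Theorem \ref{th:weight}, replacing the weight $\big(1+|x|^k\big)$ by the negative weight $\omega_k(x):=\big(1+|x|\big)^{-k}$ (which is Lipschitz, and $C^\infty$ away from the origin; equivalently, and more comfortably for rigour, one may work with the everywhere-smooth weight $\big(1+|x|^2\big)^{-k/2}$, which induces the same topology on $H^m_{-k}$). As always in this setting, the core of the matter is a family of weighted \emph{a priori} estimates, obtained by induction on $m$; existence, uniqueness and the $C_T(H^m_{-k})$ regularity then follow from exactly the same truncation-and-limit procedure used to prove Theorem \ref{thm:existenceUnboudedA} and its transport-equation counterpart in Paragraph \ref{sss:transport}, which I would not repeat.

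For the base case $m=0$ I would multiply the transport equation by $\omega_k$ and set $q_k:=\omega_k\,q$, so that
\begin{equation*}
\partial_tq_k\,+\,a\cdot\nabla q_k\,=\,\omega_k\,g\,+\,q\,\big(a\cdot\nabla\omega_k\big)\,.
\end{equation*}
The only term not already present in the unweighted case is $q\,\big(a\cdot\nabla\omega_k\big)$; here one uses the elementary bound $|\nabla\omega_k|\,\leq\,C\,\big(1+|x|\big)^{-k-1}\,\leq\,C\,\omega_k$, together with Remark \ref{r:unb-drift} (namely $|a(t,x)|\,\leq\,C\,\|\nabla a(t)\|_{L^\infty}\,\big(1+|x|\big)$), to get the pointwise estimate $\big|q\,(a\cdot\nabla\omega_k)\big|\,\leq\,C\,\|\nabla a(t)\|_{L^\infty}\,|q_k|$. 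The decay factor $\big(1+|x|\big)^{-1}$ carried by the weight here plays exactly the role that, in Theorem \ref{th:weight}, was played by the growth of $a$. Taking the $L^2$ scalar product with $q_k$, integrating by parts the transport term (this produces the usual $\tfrac12\int(\div a)\,|q_k|^2$ contribution), and using $\|\omega_k\,g\|_{L^2}=\|g\|_{H^0_{-k}}$, one obtains
\begin{equation*}
\frac{d}{dt}\,\|q_k(t)\|_{L^2}\,\leq\,C\,\|\nabla a(t)\|_{L^\infty}\,\|q_k(t)\|_{L^2}\,+\,\|g(t)\|_{H^0_{-k}}\,,
\end{equation*}
whence \eqref{est:q-weight} for $m=0$ by Gr\"onwall's lemma.

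For the inductive step, assuming the estimate at all orders $\leq m-1$, I would apply $D^\alpha$ with $|\alpha|=m$ to the equation, obtaining (no $\div a$ term appears, the equation being in non-divergence form)
\begin{equation*}
\partial_tD^\alpha q\,+\,a\cdot\nabla D^\alpha q\,=\,D^\alpha g\,-\,\sum_{0<\beta\leq\alpha}D^\beta a\cdot\nabla D^{\alpha-\beta}q\,.
\end{equation*}
Applying the $m=0$ estimate to this equation, and bounding, for $0<\beta\leq\alpha$, $\big\|\omega_k\,D^\beta a\cdot\nabla D^{\alpha-\beta}q\big\|_{L^2}\,\leq\,\|\nabla^{|\beta|}a\|_{L^\infty}\,\big\|\omega_k\,\nabla D^{\alpha-\beta}q\big\|_{L^2}$ — the case $|\beta|=1$ giving a top-order term absorbed on the left, the cases $|\beta|\geq2$ giving lower-order terms controlled by the inductive hypothesis — one sums over $0\leq\ell\leq m$, recalls the definition of $\|\cdot\|_{H^m_{-k}}$, and reaches $\frac{d}{dt}\|q(t)\|_{H^m_{-k}}\,\leq\,C\,\|\nabla a(t)\|_{C^m_b}\,\|q(t)\|_{H^m_{-k}}\,+\,\|g(t)\|_{H^m_{-k}}$, so that \eqref{est:q-weight} follows by Gr\"onwall.

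An alternative route, which is probably the most economical way of getting existence, uniqueness and time continuity all at once (and of making the above manipulations rigorous without ad hoc regularisation), is the change of unknown $\wtilde q:=\big(1+|x|^2\big)^{-k/2}\,q$: a direct computation turns the equation into a transport problem of the form \eqref{eq:transportProblem} for $\wtilde q$, with the same drift $a$, zeroth-order coefficient $b:=k\,\dfrac{a\cdot x}{1+|x|^2}$, source $\wtilde g:=\big(1+|x|^2\big)^{-k/2}g\in L^1_T(H^m)$ and datum $\wtilde q_0:=\big(1+|x|^2\big)^{-k/2}q_0\in H^m$. Using Remark \ref{r:unb-drift} once more one checks that $b\in L^1_T(C^m_b)$ with $\|b(t)\|_{C^m_b}\leq C\,\|\nabla a(t)\|_{C^m_b}$, so that Theorem \ref{thm:ex-u_Tr} applies directly to $\wtilde q$ and, after undoing the change of variables (which is a topological isomorphism $H^m\to H^m_{-k}$), yields the full statement, the exponential weight in \eqref{est:en-est_Tr} reducing to the one in \eqref{est:q-weight} precisely thanks to $\|b\|_{C^m_b}\lesssim\|\nabla a\|_{C^m_b}$. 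In either route there is no genuinely new difficulty; the only point requiring some care is the behaviour of the weight near the origin and the justification of the weighted energy identities, both disposed of by using the smooth weight $\big(1+|x|^2\big)^{-k/2}$ and the standard truncation of the drift.
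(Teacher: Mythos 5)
Your first argument is precisely the proof the paper has in mind --- the paper explicitly omits the details of Lemma \ref{l:transp-neg}, saying only that one argues as in the proof of Theorem \ref{th:weight} with the weight $\big(1+|x|\big)^{-k}$ --- and your weighted energy estimates carry this out correctly, the key point being the bound $\big|a\cdot\nabla\omega_k\big|\,\lesssim\,\|\nabla a\|_{L^\infty}\,\omega_k$ obtained from Remark \ref{r:unb-drift}, with the decay of the weight compensating the linear growth of the drift exactly as you say. Your alternative reduction to Theorem \ref{thm:ex-u_Tr} via the change of unknown $\wtilde q=\big(1+|x|^2\big)^{-k/2}q$, which produces a bounded zeroth-order coefficient $b$ with $\|b\|_{C^m_b}\lesssim\|\nabla a\|_{C^m_b}$, is also sound and is arguably the cleanest way to obtain existence, uniqueness and the $C_T(H^m_{-k})$ regularity in one stroke.
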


Let us come back to our optimal control problem. In view of Lemma \ref{l:transp-neg}, we can solve equation \eqref{adjoint} with $\theta$ and $\varphi$ equal to $|x|^2$,
getting a unique solution in the space $C_T(L^2_{-k})$ for any $k>2+d/2$. Let us fix, once for all, the choice\footnote{Given $z\in\R$, we denote by $[z]$ its entire part.}
$$
k_0\,=\,3\,+\,\left[\frac{d}{2}\right]\,.
$$
Then, it is easy to see that the functional $\LF$ is well-defined on the space
$$
\wtilde{\mbb{X}}_T\;:=\;\left(W^{1,1}_T(L^2_{k_0})\,\cap\,L^\infty_T(H^1_{k_0+1})\right)\;\times\;\LL^2_T\;\times\;C_T(L^2_{-k_0})\,.
$$
Of course, we also need to take $\rho_0$ and $g$ as in assumption \tbf{(A.1)}, with $m\geq 1$ and $k\geq k_0+1$.

Thereafter, we can write the optimality system \eqref{forwardEQ}-\eqref{adjoint}-\eqref{reducedGradient}, as done above. In order to characterize equation
\eqref{reducedGradient} in terms of the gradient of the reduced functional $\what J$, we need to further assume that $m\geq2$ and $k\geq k_0+2$.

\medbreak
Finally, notice that also the analysis in Section \ref{ss:unique} works similarly as above. Of course, we do not need anymore to impose assumption \tbf{(A.6)}.
On the other hand, assumption \tbf{(A.4)*} is now too strong, and we have to dismiss it.

However, we claim that it is still possible to get results analogous to Theorems \ref{thm:opt-contr_u} and \ref{thm:opt_u-constr}. More precisely, we have the following
statement for the unconstrained problem.
\begin{prop} \label{p:unique-confining}
Under assumptions \tbf{(A.1)}-\tbf{(A.2)}-\tbf{(A.5)}, suppose also that both $m\geq 2$ and $k\geq k_0+2$. In addition, take $\de=\nu=0$ in \eqref{JfuncEnsemble}, and $\theta(x)\,=\,\varphi(x)\,=\,|x|^2$.

Then, there exists at most one optimal control $u^*$ in the class $\LL^2_T$.
\end{prop}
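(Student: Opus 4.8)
The plan is to follow the proof of Theorem~\ref{thm:opt-contr_u} almost verbatim, the only genuine change being that the adjoint state now lives in a weighted Sobolev space of \emph{negative} index and that the weight bookkeeping must be done a bit more carefully. First, since $m\geq2$ and $k\geq k_0+2$, the reduced functional $\what J$ is Fr\'echet differentiable, so any optimal control $u^*\in\LL^2_T$ is characterised, together with its state $\rho^*=G(u^*)$ and an adjoint state $q^*$, by \eqref{forwardEQ}--\eqref{adjoint} with $\theta=\vphi=|x|^2$, and by the equality (the problem being unconstrained)
$$
\g\,u^*\,+\,\SPI\div\big((e+x)\,\rho^*\big)\,q^*\,dx\,=\,0\,.
$$
Here $\rho^*\in C_T(H^m_k)$ by Theorem~\ref{th:weight}, whereas $q^*\in C_T(H^m_{-k_0})$ by Lemma~\ref{l:transp-neg} applied to $\wtilde q(t,x)=q^*(T-t,-x)$: this is legitimate because $|x|^2$, together with all of its derivatives, belongs to $H^m_{-k}$ for every $k>2+d/2$, hence in particular to $H^m_{-k_0}$. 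Note that we need not prove existence of an optimal control, the statement asserting only uniqueness.

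Assume now there are two optimal triplets $(u_1,\rho_1,q_1)$ and $(u_2,\rho_2,q_2)$. Subtracting the two optimality equalities and setting $\de u=u_1-u_2$, $\de\rho=\rho_1-\rho_2$, $\de q=q_1-q_2$, we obtain, for almost every $t\in[0,T]$,
$$
\g\,\big|\de u(t)\big|\,\leq\,\SPI\Big|\div\big((e+x)\,\de\rho(t)\big)\,q_1(t)\Big|\,dx\,+\,\SPI\Big|\div\big((e+x)\,\rho_2(t)\big)\,\de q(t)\Big|\,dx\,=:\,I_1(t)+I_2(t)\,.
$$
For $I_1$ I would apply a Cauchy--Schwarz inequality with the dual weights $(1+|x|)^{k_0}$ and $(1+|x|)^{-k_0}$: since $q_1\in C_T(L^2_{-k_0})$ with norm controlled by Lemma~\ref{l:transp-neg} in terms of $\|\vphi\|_{H^m_{-k_0}}+T\,\|\theta\|_{H^m_{-k_0}}$, and since $\big\|\div\big((e+x)\,\de\rho\big)\big\|_{L^2_{k_0}}\lesssim\|\de\rho\|_{H^1_{k_0+1}}$, the second inequality of Lemma~\ref{l:Lip} (available precisely because $m\geq2$ and $k_0+2\leq k$, applied with $\ell=k_0+2$) yields $I_1(t)\lesssim\int_0^t|\de u(\tau)|\,d\tau$.

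For $I_2$ the two weights must be \emph{unbalanced}: I would pair $(1+|x|)^{k_0+1}$ with $(1+|x|)^{-(k_0+1)}$. Then $\big\|\div\big((e+x)\,\rho_2\big)\big\|_{L^2_{k_0+1}}\lesssim\|\rho_2\|_{H^1_{k_0+2}}$ is bounded by Theorem~\ref{th:weight} (this is where the hypothesis $k\geq k_0+2$ is really used), while by linearity $\de q$ solves
$$
-\,\d_t\de q\,-\,a(t,x;u_1)\cdot\nabla\de q\,=\,\oline a(t,x;u_1-u_2)\cdot\nabla q_2\,,\qquad\quad \de q_{|t=T}=0\,,
$$
whose source obeys $\big\|\oline a(\cdot;u_1-u_2)\cdot\nabla q_2\big\|_{L^2_{-(k_0+1)}}\lesssim|\de u|\,\|\nabla q_2\|_{L^2_{-k_0}}\lesssim|\de u|\,\|q_2\|_{H^1_{-k_0}}$; the extra factor $|x|$ coming from the bilinear part of $\oline a$ is exactly absorbed by passing from the weight $(1+|x|)^{-k_0}$ to $(1+|x|)^{-(k_0+1)}$, which is what makes the negative-index analogue of Lemma~\ref{l:Lip}, namely Lemma~\ref{l:transp-neg} with weight $(1+|x|)^{-(k_0+1)}$, applicable. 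Arguing then exactly as in the proof of Theorem~\ref{thm:opt-contr_u}, this gives $\|\de q(t)\|_{L^2_{-(k_0+1)}}\lesssim\int_0^t|\de u(\tau)|\,d\tau$, hence $I_2(t)\lesssim\int_0^t|\de u(\tau)|\,d\tau$.

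Collecting the two bounds we reach $\g\,|\de u(t)|\leq K\int_0^t|\de u(\tau)|\,d\tau$ for almost every $t$, with $K$ depending only on $T$, $\|\nabla a_0\|_{L^1_T(C^2_b)}$, $\|u_1\|_{\LL^1_T}+\|u_2\|_{\LL^1_T}$, $\|\rho_0\|_{H^2_{k_0+2}}$, $\|g\|_{L^1_T(H^2_{k_0+2})}$ and on the $H^2_{-k_0}$-norm of $|x|^2$; Gr\"onwall's lemma then forces $\de u\equiv0$ on $[0,T]$, whence uniqueness. The step I expect to be the main obstacle is exactly this weight bookkeeping: one must check that the single power of $|x|$ lost through the bilinear control term in the $\de q$-equation, and the one gained by the $\div$ acting on $(e+x)\rho$, can both be accommodated within the integrability budget $k\geq k_0+2$, $m\geq2$ granted by the hypotheses, together with the elementary but essential observation that $|x|^2$ and all its derivatives belong to $H^m_{-k}$ for every $k>2+d/2$.
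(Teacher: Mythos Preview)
Your proposal is correct and follows essentially the same route as the paper: subtract the two optimality equalities, bound the resulting $I_1$ and $I_2$ via weighted Cauchy--Schwarz, invoke Lemma~\ref{l:Lip} for $\de\rho$ and Lemma~\ref{l:transp-neg} for $q$ and $\de q$, and conclude by Gr\"onwall.

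There is one minor but genuine difference in the treatment of $I_2$. You split the weights as $(1+|x|)^{\pm(k_0+1)}$, which costs you $\|\rho_2\|_{H^1_{k_0+2}}$ on the state side but lets you bound $\|\de q\|_{L^2_{-(k_0+1)}}$ with source controlled by $\|\nabla q_2\|_{L^2_{-k_0}}\leq\|q_2\|_{H^1_{-k_0}}$, and the latter comes directly from Lemma~\ref{l:transp-neg} at regularity $m=1$. The paper instead splits the weights as $(1+|x|)^{\pm k_0}$, needing only $\|\rho_2\|_{H^1_{k_0+1}}$ but then requiring $\|\nabla q_2\|_{L^2_{-k_0+1}}$; to get this, the paper differentiates the adjoint equation for $q_2$ by hand and applies an $L^2$ energy estimate to the resulting transport equation for $(1+|x|)^{-k_0+1}\d_jq_2$. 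Your bookkeeping is slightly cleaner (it avoids that explicit differentiation step), while the paper's saves one power of weight on the state; since the hypotheses already grant $k\geq k_0+2$, both are well within budget.
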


\begin{proof}
The proof is very similar to the one to Theorem \ref{thm:opt-contr_u}, therefore we limit ourselves to put in evidence the main changes to be adopted, and to treat the most delicate points of the analysis.

As before, let $(u_1,\rho_1,q_1)$ and $(u_2,\rho_2,q_2)$ be two optimal controls with corresponding state and adjoint state. Arguing as above, we find that $\de u=u_1-u_2$ fulfils estimate \eqref{est:de-u}.
Let us now focus on the estimate of each integral appearing in that relation.

As for the former integral term, also by use of Lemma \ref{l:transp-neg}, we can write
\begin{align*}
\SPI\left|\div\big((e+x)\,\de\rho(t)\big)\,q_1(t)\right|\,dx\,&\leq\,\left\|q_1(t)\right\|_{L^2_{-k_0}}\,\left\|\div\big((e+x)\,\de\rho(t)\big)\right\|_{L^2_{k_0}} \\
&\leq\,C_3\,\left(\left\|\de\rho(t)\right\|_{L^2_{k_0}}\,+\,\left\|\nabla\de\rho(t)\right\|_{L^2_{k_0+1}}\right)\;\leq\,C_3\,\left\|\de\rho(t)\right\|_{H^1_{k_0+1}}\,.
\end{align*}
Notice that the constant $C_3$ can be expressed as follows 
\begin{equation} \label{def:C_3}
C_3\,:=\,C\,(1+T)\,\left\||x|^2\,\big(1+|x|\big)^{-k_0}\right\|_{L^2}\,\exp\Big(C\left(\left\|\div a_0\right\|_{L^1_T(L^\infty)}+\left\|u_1\right\|_{\LL^1_T}\right)\Big)\,,
\end{equation}
for a ``universal'' constant $C>0$.
At this point, the estimate for $\de\rho$ works as before, finally leading to
\begin{equation} \label{est:de-u_int1b}
\SPI\left|\div\big((e+x)\,\de\rho(t)\big) q_1(t)\right|\,dx\,\leq\,\wtilde{C}_3\,\int^t_0\big|\de u(\tau)\big|\,d\tau\,,
\end{equation}
where we have defined $\wtilde{C}_3\,=\,C_3\,K_1^{(k_0+2)}$, just depending on the data of the problem.

Next, consider the second integral in \eqref{est:de-u}. The computations are similar to the previous ones: first of all, we can estimate
\begin{align*}
\SPI\left|\div\big((e+x)\,\rho_2(t)\big)\,\de q(t)\right|\,dx\,&\leq\,\left\|\de q(t)\right\|_{L^2_{-k_0}}\,\left\|\div\big((e+x)\,\rho_2(t)\big)\right\|_{L^2_{k_0}} \\
&\leq\,\left\|\de q(t)\right\|_{L^2_{-k_0}}\,\left\|\rho_2(t)\right\|_{H^1_{k_0+1}}\;\leq\,C_4\,\left\|\de q(t)\right\|_{L^2_{-k_0}}\,,
\end{align*}
where, by Theorem \ref{th:weight} applied to equation \eqref{forwardEQ} for $\rho_2$, we obtain that
\begin{equation} \label{def:C_4}
C_4\,:=\,C\,\exp\left(C\left(\left\|\nabla a_0\right\|_{L^1_T(C^1_b)}\,+\,\left\|u_2\right\|_{\LL^1_T}\right) \right)\,
\left(\standardNorm{\rho_0}_{H^1_{k_0+1}}\, +\, \standardNorm{g}_{L^1_T(H^1_{k_0+1})} \right)\,,
\end{equation}
for a ``universal'' constant $C>0$. 
On the other hand, Lemma \ref{l:transp-neg} applied to the equation for $\de q$ gives, for a constant $C>0$, the estimate 
\begin{align*}
\left\|\de q(t)\right\|_{L^2_{-k_0}}\,&\leq\,C\,\exp\Big(C\left(\left\|\div a_0\right\|_{L^1_T(L^\infty)}\,+\,\left\|u_1\right\|_{\LL^1_T}\right)\Big)\,
\int^t_0|\de u(\tau)|\,\left\|\big(1+|x|\big)\,\nabla q_2(\tau)\right\|_{L^2_{-k_0}}\,d\tau\,. 
\end{align*}
Notice that $\left\|\big(1+|x|\big)\,\nabla q_2(\tau)\right\|_{L^2_{-k_0}}\,\leq\,\left\|\nabla q_2(\tau)\right\|_{L^2_{-k_0+1}}$. In order to bound this quantity, we can differentiate the equation
for $q_2$ with respect to $x^j$, for $1\leq j\leq d$, and get (notice that $\d_j|x|^2\,=\,2\,x^j$)
\begin{align*}
&\d_t\left(\big(1+|x|\big)^{-k_0+1}\,\d_jq_2\right)\,+\,a(t,x;u_2)\cdot\nabla\left(\big(1+|x|\big)^{-k_0+1}\,\d_jq_2\right)\,= \\
&\qquad\qquad\qquad\qquad\qquad\qquad =\,2\,x^j\,\big(1+|x|\big)^{-k_0+1}\,-\,\big(1+|x|\big)^{-k_0+1}\,\d_ja(t,x;u_2)\cdot\nabla q_2\,,
\end{align*}
with initial datum equal to $2\,x^j\,\big(1+|x|\big)^{-k_0+1}$.
Obviously, the latter term in the right-hand side can be absorbed by a Gr\"onwall argument; in addition, an easy computation shows that the former is in $L^2$. 
Therefore, by applying an $L^2$ estimate of Theorem \ref{thm:ex-u_Tr} to the previous equation implies, for a ``universal'' constant $C>0$, the following bound
$$
\left\|\nabla q_2(\tau)\right\|_{L^2_{-k_0+1}}\,\leq\,C\,\exp\Big(C\left(\left\|\nabla a_0\right\|_{L^1_T(L^\infty)}\,+\,\left\|u_2\right\|_{\LL^1_T}\right)\Big)\,(1+T)\,
\left\||x|\,(1+|x|)^{-k_0+1}\right\|_{L^2}\,.
$$
By use of this latter estimate, we finally obtain 
\begin{equation} \label{est:de-u_int2b}
\SPI\left|\div\big((e+x)\,\rho_2(t)\big)\,\de q(t)\right|\,dx\,\leq\,\wtilde{C}_4\,\int^t_0\big|\de u(\tau)\big|\,d\tau\, , 
\end{equation}
where we have defined $\wtilde{C}_4\,:=\,C_4\,\wtilde{\mc K}^{(1)}_1$ and 
\begin{equation} \label{def:K-qb}
\wtilde{\mc K}^{(1)}_1\,:=\,C\,\exp\Big(C\left(\left\|\nabla a_0\right\|_{L^1_T(C^1_b)}+\left\|u_1\right\|_{\LL^1_T}+\left\|u_2\right\|_{\LL^1_T}\right)\Big)\,(1+T)\,
\left\||x|\,(1+|x|)^{-k_0+1}\right\|_{L^2}\,.
\end{equation}

At this point, we can now insert the estimates \eqref{est:de-u_int1b} and \eqref{est:de-u_int2b} into \eqref{est:de-u}, and get, for a new constant $\mc K\,=\,\wtilde C_3+\wtilde C_4$, the relation
$$
\g\,\big|\de u(t)\big|\,\leq\,\,\mc K\,\int^t_0\big|\de u(\tau)\big|\,d\tau\,.
$$
We then conclude by an application of Gr\"onwall's lemma.
\end{proof}

The constrained case can be dealt with similarly, by the use of the new constants $C_3$, $C_4$, $\mc K^{(1)}_1$ and $K^{(k_0+2)}_1$. We omit its precise statement and presentation.
We just remark that, after defining % $\wtilde{\mc K}$ as follows
\begin{align*}
\wtilde{\mc K}\,&:=\,C\,(1+T)\,\left\|\big(1+|x|\big)^{-k_0+2}\right\|_{L^2}\,\times \\
&\qquad\qquad \times\,\exp\Big(C\left(\left\|\nabla a_0\right\|_{L^1_T(C^2_b)}+T\,\max\left\{\big|u^a\big|\,,\,\big|u^b\big|\right\}\right)\Big)\,
\left(\left\|\rho_0\right\|_{H^2_{k_0+2}}\,+\,\left\|g\right\|_{L^1_T(H^2_{k_0+2})}\right)\,,
\end{align*}
the new smallness condition reads $\wtilde{\mc K}\,T/\g\,<\,2$.

%%%%%%%%%%%%%%%%%%%%%%%%%%%%%%%%%%%%%%%%%%%%%%%%%%%%%%%%%%%%%%%%%%%%%%%%%%%%%%%%%%%%%%%%%%%%%%%%%%%%%%%%%%%%%%%%%%%%%%%%%%%%%%%%%%%%%%%%%%%%%%%%%%%%%%%%%%
\appendix

\section{Appendix -- Proof of some technical results} \label{app:appendix}

In this appendix, we collect the proof of some technical lemmas which we have used in the course of our investigation. We start with the proof to Lemma \ref{l:commutator}:
we limit ourselves to give a sketch of its proof, and refer to \cite{DiPernaLions1989} for details.
\begin{proof}[Proof of Lemma \ref{l:commutator}]
First of all, let us focus just on the dependence with respect to the space variable, and forget about the time variable in the next computations.

We start by showing that the commutator $r^j_n\,:=\,\d_j\big(\left[a,S_n\right]\,\cdot\,\big)$ is a bounded operator acting from $L^2$ into itself, uniformly in $n$. More precisely,
there exists a ``universal'' constant $C$, dependending on $a$ but independent of $j$ and (more importantly) of $n$, such that
\begin{equation} \label{est:comm-L^2}
\left\|\d_j\big(\left[a,S_n\right]\rho\big)\right\|_{L^2}\,\leq\,C\,\|\rho\|_{L^2}\qquad\qquad\forall\,\rho\,\in\,L^2\,.
\end{equation}
Indeed, straightforward computations show that
\begin{align*}
r^j_n(\rho)\,=\,\d_j\left(\big[a,S_n\big]\rho\right)\,=\,\d_j a\,S_n\rho\,+\,\Sigma^j_n\,, \qquad\qquad\mbox{ with }\qquad\Sigma^j_n\,:=\,a\,\partial_jS_n\rho\,-\,\d_jS_n(a\,\rho)\,.
\end{align*}
Of course, $\left\|\d_j a\,S_n\rho\right\|_{L^2}\,\leq\,C\,\|\nabla a\|_{L^\infty}\,\|\rho\|_{L^2}$, for a constant $C>0$ independent of $n\in\N$.
Moreover, we notice that we can write
$$
\Sigma^j_n\,=\,n^{d+1}\,\int_{\R^d}\,\d_js\big(n(x-y)\big)\,\rho(y)\,\big(a(x)\,-\,a(y)\big)\,dy\,.
$$
Hence, an application of the mean-value theorem to $a$ implies
$$
\left\|\Sigma^j_n\right\|_{L^2}\,\leq\,\left\|\nabla a\right\|_{L^\infty}\,\|\rho\|_{L^2}\,\left\|\d_js(z)\,|z|\right\|_{L^1}\,\leq\,C\,\|\rho\|_{L^2}\,,
$$
as claimed.

On the other hand, whenever $\rho$ is more regular, say $\rho\in H^1$, we can write
\begin{align*}
r^j_n(\rho)%\,:=\,\d_j\left(\big[a,S_n\big]\rho\right)
\,=\,\d_j a\,S_n\rho\,-\,S_n(\d_ja\,\rho)\,+\,a\,S_n\partial_j\rho\,-\,S_n(a\,\d_j\rho)\,=\,\d_ja\,S_n\rho\,-\,S_n(\d_j\,a\rho)\,+\,\big[a,S_n\big]\d_j\rho\,.
\end{align*}
Notice now that $\d_j a\,S_n\rho-S_n(\d_ja\,\rho)\longrightarrow 0$ in $L^2$, for $n \rightarrow \infty$: this holds true, since both terms strongly converge to $\d_ja\,\rho$ in $L^2$.
As for the last term on the right-hand side of the previous equality, we can argue similarly as above and write
\begin{align*}
\big[a,S_n\big]\d_j\rho\,&= \,n^d\int_{\RR^d} s\big(n(x-y)\big)\,\d_j\rho(y)\,\big(a(x)\,-\,a(y)\big)\,dy\,,% \\
%&= \,n^d\int_{\RR^d} s\big(n(x-y)\big)\,\d_j\rho(y)\,\left(\int_0^1 \nabla a(x+ty)dt \right)(x-y)dy \\
%&\leq \standardNorm{\nabla a}_{L^\infty(\RR^d)} \frac{1}{n} n^d \int_{\RR^d} s(n(x-y)) \d_j \rho(y)n(x-y) dy \\
%&\leq \frac{1}{n}\standardNorm{\nabla a}_{L^\infty(\RR^d)}\standardNorm{\nabla \rho}_{L^2(\RR^d)}\standardNorm{s}_{L^1(\RR^d)}.
\end{align*}
which implies, by use of the mean-value theorem again,
$$
\left\|\big[a,S_n\big]\d_j\rho\right\|_{L^2}\,\leq\,\frac{1}{n}\,\left\|\nabla a\right\|_{L^\infty}\,\left\|\nabla\rho\right\|_{L^2}\,\left\|s(z)\,|z|\right\|_{L^1}\,.
$$
Therefore, we have proven that, whenever $\rho\in H^1$, we have $r^j_n(\rho)\,\longrightarrow\,0$ strongly in $L^2$, whenever $n\ra+\infty$.

This latter property, combined with \eqref{est:comm-L^2} and a standard approximation procedure, implies  that, for any $\rho\in L^2$, we have $r^j_n(\rho)\,\longrightarrow\,0$ for $n\ra+\infty$,
in the strong topology of $L^2$.

\medbreak
Let us now consider the general case when the functions $a$ and $\rho$ also depend on the time variable, verifying the properties assumed in Lemma \ref{l:commutator}.
We have then to repeat the previous steps, keeping track of the time regularity.

We just put in evidence that, in the approximation procedure, given $\rho\,\in\,L^\infty_T(L^2)$, we can approximate it with a sequence $\big(\rho_\de\big)_{\de>0}\,\subset\,L^\infty_T(L^2)$,
with $\rho_\de\,\in\,L^\infty_T(H^1)$ for any $\de>0$ fixed, and such that $\rho_\de\,\longrightarrow\,\rho$ strongly in $L^p_T(L^2)$ for any $1\leq p<+\infty$.

This entails that, before ahead, we need to regularize $a$ with respect to time, for instance by convolution.
In particular, we take a sequence $\big(a_\eta\big)_{\eta>0}\,\subset\,L^1_T(C^1)$, with $\big(\nabla a_\eta\big)_{\eta>0}\,\subset\,L^1_T(L^\infty)$ such that $\nabla a_\eta\,\longrightarrow\,\nabla a$
in $L^1_T(L^\infty)$, and $a_\eta$ smooth (say continuous) with respect to time.

Then, after fixing some $1\leq p<+\infty$ and denoting by $q$ its conjugate exponent (namely,  $1/p\,+\,1/q\,=\,1$), we can estimate
\begin{align*}
\standardNorm{r^j_n(\rho)}_{L^1_T(L^2)}\,&\leq\,\standardNorm{\d_j \big( [a-a_\eta,S_n]\rho\big)}_{L^1_T(L^2)}\,+\,
\standardNorm{\d_j \big( [a_\eta,S_n](\rho-\rho_\delta)\big)}_{L^1_T(L^2)}\,+\,\standardNorm{\d_j\big( [a_\eta,S_n]\rho_\delta\big)}_{L^1_TL^2} \\
&\leq\,C\,\Big(\standardNorm{\nabla(a-a_\eta)}_{L^1_T(L^\infty)}\,\standardNorm{\rho}_{L^\infty_T(L^2)}\,+ \\
&\qquad\qquad\qquad\qquad\qquad +\,\standardNorm{\nabla a_\eta}_{L^q_T(L^2)}\,\standardNorm{\rho-\rho_\delta}_{L^p_T(L^2)}\,+\,
\standardNorm{\d_j \left( [a_\eta,S_n] \right)\rho_\delta}_{L^1_T(L^2)}\Big)\,.
\end{align*}
At this point, for any given $\veps>0$, we can fix first of all $\eta>0$ so that the first term in the right-hand side is smaller than $\veps$; then, in correspondence of that $\eta$,
we fix $\delta>0$ so small that also the second term is bounded by $\veps$; finally, in the last term we can pass to the limit with respect to $n$, making it smaller than $\veps$ as well.

Thus the lemma is proved.
\end{proof}

Let us now give the details of the proof to Lemma \ref{l:H^m_k}.
\begin{proof}[Proof of Lemma \ref{l:H^m_k}]
Of course, by definition of the space $H^m_k$, we have that $f\in H^m$. We only need to prove that $|x|^k\,f$ belongs to $H^m$ as well.

First of all, by an easy induction, it follows that $|x|^k\,D^\alpha f\,\in\,L^2$ for all $0\leq|\alpha|\leq m-1$.
In particular $f\,\in\,L^2_k$, i.e. $|x|^k\,f\,\in\,L^2$. Moreover, since $f\in H^m$, we gather also that $|x|^j\,D^\alpha f\,\in\,L^2$ for all $0\leq j\leq k$ and $0\leq|\alpha|\leq m-1$.

Now, fixed some $1\leq\ell\leq m$, let us take a multi-index $\alpha$ such that $|\alpha|=\ell$: we want to prove that $D^\alpha\big(|x|^k\,f\big)\,\in\,L^2$. For this, we write, by Leibniz rule,
$$
D^\alpha\left(|x|^k\,f\right)\,=\,|x|^k\,D^\alpha f\,+\,\sum_\beta D^\beta|x|^k\,D^{\alpha-\beta}f\,,
$$
where the sum is performed for all $\beta\leq\alpha$ such that $|\beta|\geq1$. The first term in the right-hand side belongs to $L^2$ by the argument here above. Moreover, one has
$$
\left|D^\beta|x|^k\right|\,\leq\,C\,|x|^{k-|\beta|}\,. %\leq\,C\,\left(1\,+\,|x|^k\right)\,,
$$
At this point, we observe that $k-|\beta|\,<\,k$, since $|\beta|\geq1$; in addition, $k-|\beta|\geq0$, owing to the fact that $|\beta|\leq|\alpha|=\ell\leq m\leq k$.
%where the second inequality holds true owing to the fact that $|\beta|\leq|\alpha|=\ell\leq m\leq k$.
Therefore, by the properties previously established, we gather that
each term $D^\beta|x|^k\,D^{\alpha-\beta}f$ of the sum also belongs to $L^2$. In turn, this implies that $D^\alpha\left(|x|^k\,f\right)\,\in\,L^2$,
for all $0\leq |\alpha|\leq m$, that is to say $|x|^k\,f\,\in\,H^m$.
\end{proof}

{\small
%\bibliography{LiouvilleTheory}{}

\begin{thebibliography}{XXX}

\bibitem{Amb_2004}
{\sc L.~Ambrosio}, {\em Transport equation and {C}auchy problem for {$BV$}
  vector fields}, Invent. Math., 158 (2004), pp.~227--260.

\bibitem{AmbrosioCrippa2008}
{\sc L.~Ambrosio and G.~Crippa}, {\em Existence, uniqueness, stability and
  differentiability properties of the flow associated to weakly differentiable
  vector fields}, in Transport equations and multi-{D} hyperbolic conservation
  laws, vol.~5 of Lect. Notes Unione Mat. Ital., Springer, Berlin, 2008,
  pp.~3--57.

\bibitem{Amb-Cr_2014}
\leavevmode\vrule height 2pt depth -1.6pt width 23pt, {\em Continuity equations
  and {ODE} flows with non-smooth velocity}, Proc. Roy. Soc. Edinburgh Sect. A,
  144 (2014), pp.~1191--1244.

\bibitem{BCD}
{\sc H.~Bahouri, J.-Y. Chemin, and R.~Danchin}, {\em Fourier analysis and
  nonlinear partial differential equations}, vol.~343 of Grundlehren der
  Mathematischen Wissenschaften [Fundamental Principles of Mathematical
  Sciences], Springer, Heidelberg, 2011.

\bibitem{Barbu12}
{\sc V.~Barbu and T.~Precupanu}, {\em Convexity and optimization in {B}anach
  spaces}, Springer Monographs in Mathematics, Springer, Dordrecht, fourth~ed.,
  2012.

\bibitem{BG-Serre_2007}
{\sc S.~Benzoni-Gavage and D.~Serre}, {\em Multidimensional hyperbolic partial
  differential equations}, Oxford Mathematical Monographs, The Clarendon Press,
  Oxford University Press, Oxford, 2007.
\newblock First-order systems and applications.

\bibitem{Brezis}
{\sc H.~Brezis}, {\em Functional analysis, {S}obolev spaces and partial
  differential equations}, Universitext, Springer, New York, 2011.

\bibitem{Brockett1997}
{\sc R.~W. Brockett}, {\em Minimum attention control}, in Proceedings of the
  36th IEEE Conference on Decision and Control, vol.~3, IEEE, 1997,
  pp.~2628--2632.

\bibitem{Brockett2007}
\leavevmode\vrule height 2pt depth -1.6pt width 23pt, {\em Optimal control of
  the {L}iouville equation}, in Proceedings of the {I}nternational {C}onference
  on {C}omplex {G}eometry and {R}elated {F}ields, vol.~39 of AMS/IP Stud. Adv.
  Math., Amer. Math. Soc., Providence, RI, 2007, pp.~23--35.

\bibitem{Brockett2012}
\leavevmode\vrule height 2pt depth -1.6pt width 23pt, {\em Notes on the control
  of the {L}iouville equation}, in Control of partial differential equations,
  vol.~2048 of Lecture Notes in Math., Springer, Heidelberg, 2012,
  pp.~101--129.

\bibitem{Candes2006}
{\sc E.~J. Cand\`es, J.~K. Romberg, and T.~Tao}, {\em Stable signal recovery
  from incomplete and inaccurate measurements}, Comm. Pure Appl. Math., 59
  (2006), pp.~1207--1223.

\bibitem{Cercignani1969}
{\sc C.~Cercignani}, {\em Mathematical methods in kinetic theory}, Plenum
  Press, New York, 1969.

\bibitem{ChoVenturiKarniadakis2015}
{\sc H.~Cho, D.~Venturi, and G.~E. Karniadakis}, {\em Numerical methods for
  high-dimensional probability density function equations}, J. Comput. Phys.,
  305 (2016), pp.~817--837.

\bibitem{CiaramellaBorzi16}
{\sc G.~Ciaramella and A.~Borz\`\i}, {\em Quantum optimal control problems with
  a sparsity cost functional}, Numer. Funct. Anal. Optim., 37 (2016),
  pp.~938--965.

\bibitem{CockshottZachariah2013}
{\sc P.~Cockshott and D.~Zachariah}, {\em Conservation laws, financial entropy
  and the eurozone crisis}, Economics, 8 (2014-5).

\bibitem{Colonna2016}
{\sc G.~Colonna and A.~D'Angola}, {\em Plasma Modeling: Methods and
  Applications}, IOP Publishing, New York, 2016.

\bibitem{Crippa_PhD}
{\sc G.~Crippa}, {\em The flow associated to weakly differentiable vector
  fields}, vol.~12 of Tesi. Scuola Normale Superiore di Pisa (Nuova Series)
  [Theses of Scuola Normale Superiore di Pisa (New Series)], Edizioni della
  Normale, Pisa, 2009.

\bibitem{DiPernaLions1989}
{\sc R.~J. DiPerna and P.-L. Lions}, {\em Ordinary differential equations,
  transport theory and {S}obolev spaces}, Invent. Math., 98 (1989),
  pp.~511--547.

\bibitem{EisemanStone1980}
{\sc P.~R. Eiseman and A.~P. Stone}, {\em Conservation laws of fluid
  dynamics---a survey}, SIAM Rev., 22 (1980), pp.~12--27.

\bibitem{Fei-No}
{\sc E.~Feireisl and A.~Novotn\'y}, {\em Singular limits in thermodynamics of
  viscous fluids}, Advances in Mathematical Fluid Mechanics, Birkh\"auser
  Verlag, Basel, 2009.

\bibitem{GodlewskiRaviart1991}
{\sc E.~Godlewski and P.-A. Raviart}, {\em Hyperbolic systems of conservation
  laws}, vol.~3/4 of Math\'ematiques \& Applications (Paris) [Mathematics and
  Applications], Ellipses, Paris, 1991.

\bibitem{Lions1971}
{\sc J.-L. Lions}, {\em Optimal control of systems governed by partial
  differential equations}, Translated from the French by S. K. Mitter. Die
  Grundlehren der mathematischen Wissenschaften, Band 170, Springer-Verlag, New
  York-Berlin, 1971.

\bibitem{Met_2008}
{\sc G.~M\'{e}tivier}, {\em Para-differential calculus and applications to the
  {C}auchy problem for nonlinear systems}, vol.~5 of Centro di Ricerca
  Matematica Ennio De Giorgi (CRM) Series, Edizioni della Normale, Pisa, 2008.

\bibitem{OceanDynamics}
{\sc D.~Olbers, J.~Willebrand, and C.~Eden}, {\em Ocean Dynamics}, Springer,
  Berlin, Heidelberg, 2012.

\bibitem{Risken1996}
{\sc H.~Risken}, {\em The {F}okker-{P}lanck equation}, vol.~18 of Springer
  Series in Synergetics, Springer-Verlag, Berlin, second~ed., 1989.
\newblock Methods of solution and applications.

\bibitem{Stadler07}
{\sc G.~Stadler}, {\em Elliptic optimal control problems with {$L^1$}-control
  cost and applications for the placement of control devices}, Comput. Optim.
  Appl., 44 (2009), pp.~159--181.

\bibitem{Troeltzsch2010}
{\sc F.~Tr\"oltzsch}, {\em Optimal control of partial differential equations},
  vol.~112 of Graduate Studies in Mathematics, American Mathematical Society,
  Providence, RI, 2010.
\newblock Theory, methods and applications, Translated from the 2005 German
  original by J\"urgen Sprekels.

\end{thebibliography}
\bibliographystyle{siam}

}

\end{document}